\documentclass[article]{amsart}
\usepackage{amssymb,amsfonts,amsmath,amsthm}
\usepackage[all,arc]{xy}
\usepackage{enumerate}
\usepackage{mathrsfs}
\usepackage[toc,page]{appendix}
\usepackage{graphicx}
\usepackage{tabularx}
\usepackage{multirow} 
\usepackage{caption} 
\usepackage{url}
\usepackage{color}
\usepackage{tikz-cd}  
\usepackage{mathdots} 
\usepackage{romannum} 
\usepackage{hyperref} 
\usepackage{float} 

\newtheorem{thm}{Theorem}[section]

\newtheorem*{thm*}{Theorem}
\newtheorem*{cor*}{Corollary}
\newtheorem*{prop*}{Proposition}
\newtheorem{cor}[thm]{Corollary}
\newtheorem{prop}[thm]{Proposition}
\newtheorem{lem}[thm]{Lemma}
\newtheorem{conj}[thm]{Conjecture}

\theoremstyle{definition}
\newtheorem{defn}[thm]{Definition}

\newtheorem{conv*}{Convention}
\newtheorem{exmp}[thm]{Example}

\newtheorem*{notn*}{Notation}

\theoremstyle{remark}
\newtheorem{rem}[thm]{Remark}

\newtheorem*{idea*}{Idea}

\newcommand{\Spec}{{\rm Spec}}

\makeatletter
\let\c@equation\c@thm
\makeatother
\numberwithin{thm}{section}
\numberwithin{equation}{section}

\title[Cyclic Spectral Data]{Cyclic Spectral Data}

\author{Hao Sun}

\begin{document}
\pagenumbering{arabic}
\maketitle
\begin{abstract}
In this paper, we study cyclic spectral data on smooth varieties over an algebraically closed field in characteristic zero. We show that the Hitchin fiber of any nonzero cyclic spectral data is non-empty. We also construct sections of the Hitchin morphism for cyclic spectral data and tautological (flat) families of cyclic spectral covers.
\end{abstract}
	
\flushbottom
	
	
\renewcommand{\thefootnote}{\fnsymbol{footnote}}
\footnotetext[1]{Key words: spectral data, Hitchin morphism, Higgs bundle}
\footnotetext[2]{MSC2020: 14D20, 14J60}
	
\section{Introduction}

We fix two positive integers $d$ for the dimension and $n$ for the rank. Let $X$ be a connected smooth variety over an algebraically closed field $k$ in characteristic zero with cotangent sheaf $\Omega_X^1$. Let $T^*_X:={\rm tot}(\Omega_X^1)$ be the total space. Define
\begin{align*}
    \mathscr{A}_X:=\bigoplus_{i=1}^n H^0(X, S^i \Omega_X^1),
\end{align*}
which is called the \emph{Hitchin base}. Let $\mathscr{M}_X$ be the moduli stack of Higgs bundles of rank $n$ on $X$. The Hitchin morphism
$h_X: \mathscr{M}_X \rightarrow \mathscr{A}_X$ sends a Higgs bundle $(E,\theta)$ to the coefficients of the characteristic polynomial of $\theta$ \cite[\S 6, Hitchin's proper map]{Sim94b}. Given any $\underline{a} = (a_1,\dots,a_n) \in \mathscr{A}_X$, we can construct a closed subscheme $X_a$ of $T^*_X$, which is defined by the equation $t^n + a_1 t^{n-1} + \dots + a_n = 0$. 
\begin{center}
\begin{tikzcd}
X_a \arrow[r, hook] \arrow[rd, "\pi_a" description] & T^*_X \arrow[d] \\
 & X
\end{tikzcd}
\end{center}
In the case of algebraic curves, $h_X$ is surjective, and when $X_a$ is integral, the category of torsion‑free sheaves $\mathcal{L}$ of rank one on $X_a$ and the category of Higgs bundles $(E,\theta)$ satisfying $h_X((E,\theta)) = a$ \cite[Proposition 3.6]{BNR89}. This correspondence provides a new approach to studying the fibers of the Hitchin morphism and the moduli space of Higgs bundles. However, in higher dimensions, the situation becomes significantly more complicated. Chen and Ng\^o showed that the Hitchin morphism factors through a closed subscheme $\mathscr{B}_X$ of $\mathscr{A}_X$ 
\begin{center}
\begin{tikzcd}
& & \mathscr{M}_X \arrow[dd, "h_X" description] \arrow[lldd, "{\rm sd}_X"  description, dotted] \\
& & \\
\mathscr{B}_X \arrow[rr, hook, "\iota_X" description] & & \mathscr{A}_X
\end{tikzcd}
\end{center}
defined as
\begin{align*}
    \mathscr{B}_X:={\rm Sect}(X, {\rm Chow}_n(T^*_X/X)),
\end{align*}
where the relative Chow scheme ${\rm Chow}_n(T^*_X/X)$ is defined as 
\begin{align*}
    {\rm Chow}_n(T^*_X/X):=T^*_X \times_X \dots \times_X T^*_X / \mathfrak{S}^n,
\end{align*}
and $\mathfrak{S}^n$ is the symmetric group. The morphism ${\rm sd}_X$ is called the \emph{spectral data morphism}. 

Given a $k$-point $b \in \mathscr{B}_X(k) \subseteq \mathscr{A}_X(k)$, let $X_b$ be the corresponding spectral cover. Consider an open subset $\mathscr{B}_X^{\heartsuit} \subseteq \mathscr{B}_X$ consisting of sections $b: X \rightarrow {\rm Chow}_n(T^*_X/X)$ that map the generic point of $X$ to multiplicity free $0$-cycles. Chen and Ng\^o proved the following correspondences on this open locus $\mathscr{B}_X^{\heartsuit}$:
\begin{prop}[Proposition 3.2 in \cite{CN18}  and Proposition 6.3 in \cite{CN20}]
Given any $b \in \mathscr{B}^{\heartsuit}_X(k)$, the fiber ${\rm sd}_X^{-1}(b)$ is isomorphic to the algebraic stack of maximal Cohen-Macaulay sheaves of generic rank one on the spectral cover $X_b$.
\end{prop}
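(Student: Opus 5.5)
The plan is to build mutually inverse functors between Higgs bundles $(E,\theta)$ with ${\rm sd}_X(E,\theta)=b$ and maximal Cohen--Macaulay sheaves of generic rank one on $X_b$. Write $\pi_b: X_b\to X$ for the projection, which is finite and flat of degree $n$ because $X_b$ is cut out of $T^*_X$ by a monic polynomial.

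\textbf{From Higgs bundles to sheaves.} A Higgs field $\theta: E\to E\otimes\Omega^1_X$ satisfying $\theta\wedge\theta=0$ is the same thing as an action of ${\rm Sym}^\bullet T_X$ on $E$. This makes $E$ a coherent sheaf $\mathcal{F}$ on $T^*_X$, finite over $X$. By Cayley--Hamilton, $\theta$ satisfies its characteristic polynomial $t^n+a_1t^{n-1}+\dots+a_n$. Hence $\mathcal{F}$ is supported on, and is a module over, $\mathcal{O}_{X_b}$, and $\pi_{b*}\mathcal{F}=E$ is locally free.
\begin{itemize}
\item Maximal Cohen--Macaulayness is local. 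A finite module over the Cohen--Macaulay scheme $X_b$ (a hypersurface in the smooth $T^*_X$) is maximal Cohen--Macaulay iff its pushforward to the regular scheme $X$ is locally free, by Auslander--Buchsbaum together with the equality of depth over $X_b$ and over $X$ for finite morphisms.
\item For the generic rank: at the generic point $\eta$, the hypothesis $b\in\mathscr{B}^{\heartsuit}_X$ makes the $n$ eigenvalues distinct. So $X_{b,\eta}$ is a reduced finite étale $k(\eta)$-algebra of degree $n$, and $E_\eta$ of dimension $n$ over it must be free of rank one.
\end{itemize}

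\textbf{From sheaves to Higgs bundles.} Given a maximal Cohen--Macaulay $\mathcal{F}$ of generic rank one, set $E=\pi_{b*}\mathcal{F}$. It is locally free by the same criterion, and of rank $n$. Multiplication by the tautological section $t\in H^0(T^*_X,\pi^*\Omega^1_X)$ gives $\theta$, and $\theta\wedge\theta=0$ holds automatically since $\mathcal{O}_{X_b}$ is commutative.

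\textbf{Main obstacle.} The difficult step is showing that the spectral data of $(E,\theta)$ is $b$ itself, not merely that $h_X(E,\theta)=\iota_X(b)$. Since $\iota_X$ is a closed immersion, it suffices to check that the characteristic polynomial of $\theta$ equals $t^n+a_1t^{n-1}+\dots+a_n$. Both are sections of the Hitchin base, so by torsion-freeness it suffices to compare them at $\eta$. There $E_\eta$ is free of rank one over the degree-$n$ algebra $\mathcal{O}_{X_b,\eta}$, and the characteristic polynomial of multiplication by $t$ on the regular representation is the defining polynomial. This is where generic rank one and multiplicity-freeness are essential.

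\textbf{Stack isomorphism.} The two functors are inverse because $\pi_b$ is affine, so $\pi_{b*}$ is an equivalence between quasi-coherent $\mathcal{O}_{X_b}$-modules and $\pi_{b*}\mathcal{O}_{X_b}$-modules on $X$. Finally, run the whole construction in families over a base $S$. Flatness over $S$ is preserved, and maximal Cohen--Macaulayness is interpreted fiberwise, equivalently as $\pi_{b*}\mathcal{F}$ being locally free on $X\times S$. This gives the isomorphism of stacks ${\rm sd}_X^{-1}(b)\cong$ the stack of maximal Cohen--Macaulay sheaves of generic rank one on $X_b$.
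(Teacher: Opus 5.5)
Your overall architecture is the right one: Cayley--Hamilton to make $E$ a module over $\mathcal{O}_{X_b}$, the criterion ``maximal Cohen--Macaulay $\Leftrightarrow$ $\pi_{b*}\mathcal{F}$ locally free'', generic rank one from multiplicity-freeness, and comparison of characteristic polynomials at the generic point. The paper itself gives no proof; it quotes Chen--Ng\^o.

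\textbf{The gap.} Your starting premise is the curve picture, and it is false for $d\ge 2$. The expression $t^n+a_1t^{n-1}+\dots+a_n$ is a section of the pullback of $S^n\Omega^1_X$ to $T^*_X$. So $X_b$ is cut out by $\binom{n+d-1}{n}$ equations and has codimension $d$. It is not a hypersurface, $\pi_b$ is finite but in general not flat, and $X_b$ is in general not Cohen--Macaulay.

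Example~\ref{exmp_spectral_cover_2}(1) is a counterexample with $b\in\mathscr{B}^{\heartsuit}_X$. For $a=(x_1\omega_1+x_2\omega_2)^2$, the fibre over the origin is $k[t_1,t_2]/(t_1,t_2)^2$, of length $3$, while the generic fibre has length $2$. The local ring at that point has depth $0$ and dimension $2$. Indeed, if $\pi_b$ were finite flat of degree $n$, then $\mathcal{O}_{X_b}$ itself would be maximal Cohen--Macaulay of generic rank one for every $b\in\mathscr{B}^{\heartsuit}_X$. The difficulty described around Conjecture~\ref{conj_CN5.2} would then not exist.

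\textbf{Where it bites.} This false premise is exactly what you cite to justify the maximal Cohen--Macaulay criterion. The criterion is still true, but for a different reason. Invariance of depth under the finite map $\pi_b$, plus Auslander--Buchsbaum over the regular rings $\mathcal{O}_{X,x}$, yields it precisely when $\dim\mathcal{O}_{X_b,y}=\dim\mathcal{O}_{X,\pi_b(y)}$ for all $y$. That is, every irreducible component of $X_b$ must dominate $X$. No Cohen--Macaulayness of $X_b$ is needed, but this equidimensionality is. It is where $b\in\mathscr{B}_X$ must be used, not merely $\iota_X(b)\in\mathscr{A}_X$; compare Proposition~\ref{prop_dim_d}.

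\textbf{How to get equidimensionality.} Set-theoretically, the fibre of $X_b$ over $x$ is the support of the cycle $b(x)$. The points $v_1,\dots,v_n$ of the generic cycle have coordinates integral over $\mathcal{O}_X$, since the $\omega_j^n$-coefficients give monic equations. So they extend over the normalization $\widetilde{X}$ of $X$ in a splitting field. The composite of $b$ with $\widetilde{X}\to X$ equals $[v_1,\dots,v_n]$, because both agree generically and the target is affine over $X$. Hence every point of $X_b$ lies in the closure of some $v_i$.

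\textbf{Two smaller points.} For Cayley--Hamilton you need all $\binom{n+d-1}{n}$ coefficient equations to kill $E$. This follows by applying the one-variable theorem to $\sum_i\lambda_i\theta_i$ over $\mathcal{O}_X[\lambda_1,\dots,\lambda_d]$ and comparing $\lambda^\alpha$-coefficients, using that the $\theta_i$ commute. In the generic-rank-one step, say explicitly that $E_\eta$ splits into eigenspaces whose dimensions are the multiplicities of the $n$ distinct generic eigenvalues, hence all equal to $1$.

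Flatness of $\pi_b$ is not used anywhere else: affineness gives the module equivalence, and only the generic degree $n$ enters the rank count. With these repairs your argument goes through.
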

Since the spectral cover $X_b$ is not well-behaved (e.g., non-reduced, non-Cohen-Macaulay), it is not clear how to prove the existence of maximal Cohen-Macaulay sheaves of generic rank one on $X_b$. Therefore, they proposed the following conjecture:
\begin{conj}[Conjecture 5.2 in \cite{CN20}]\label{conj_CN5.2}
Let $X$ be a smooth proper variety. For every $b \in \mathscr{B}_X(k)$, the fiber ${\rm sd}_X^{-1}(b)$ is non-empty.
\end{conj}
In the case of smooth proper algebraic surfaces, Chen and Ng\^o constructed a Cohen-Macaulay spectral cover $X^{\rm CM}_b$ and proved that the Hitchin fiber $h^{-1}_X(b)$ for $b \in \mathscr{B}^{\heartsuit}_X(k)$ is isomorphic to the stack of Cohen-Macaulay sheaves of generic rank one on $X^{\rm CM}_b$. Since $X^{\rm CM}_b$ is Cohen-Macaulay, the fiber ${\rm sd}_X^{-1}(b)$ is automatically  non-empty \cite[Theorem 7.3]{CN20}. Moreover, this conjecture has been confirmed by Song and the author in a subsequent work \cite{SS24}. He and Liu studied the rank two case in detail and confirmed this conjecture \cite{HL24}.

In this paper, we consider a special case of the spectral data, which are called the \emph{cyclic spectral data}.

\begin{defn}\label{defn_cyc_spec_data}
A geometric point $b \in \mathscr{B}_X(k)$ is called \emph{cyclic} if 
\begin{align*}
    \iota_X(b) = (0,\dots,0,-a) \in \mathscr{A}_X(k)
\end{align*}
for some $a \in H^0(X,S^n \Omega_X^1)$. Such geometric points $b \in \mathscr{B}_X(k)$ and the corresponding elements $a \in H^0(X,S^n\Omega_X^1)$ are called \emph{cyclic spectral data}. 

The subscheme $\mathscr{B}_X^{\rm cyc} \subseteq \mathscr{B}_X$ of cyclic spectral data is defined as
\begin{align*}
    \mathscr{B}^{\rm cyc}_X := \mathscr{B}_X \times_{\mathscr{A}_X} H^0(X,S^n \Omega_X^1)
\end{align*}
Moreover, let $\mathscr{B}^{\rm cyc, \times}_X := \mathscr{B}_X^{\rm cyc} \backslash \{0\}$ be the subscheme of nonzero cyclic spectral data.
\end{defn}

The spectral cover given by cyclic spectral data is called a \emph{cyclic spectral cover}. In the case of curves, there are many interesting works from different viewpoints \cite{Bar15,BGG03,DL20}. 

Since it is more convenient to work on elements in $H^0(X,S^n \Omega_X^1)$, we prefer to use the corresponding element $a \in H^0(X,S^n \Omega_X^1)$ of the given spectral data $b$ to construct the spectral cover, and we use the notations $X_a$ for the spectral cover and $h_X^{-1}(a)$ for the Hitchin fiber.

We first prove the non-emptiness of the Hitchin fiber of cyclic spectral data via a discussion of local pictures in \S\ref{sect_nonempty}.
\begin{thm}[Theorem \ref{thm_nor_MCM}]\label{thm_nonempty}
Let $X$ be a connected smooth variety over $k$. Let $a$ be nonzero cyclic spectral data. The fiber $h_X^{-1}(a)$ is non-empty.
\end{thm}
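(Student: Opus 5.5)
Since $b$ is spectral data, the point $\iota_X(b)=(0,\dots,0,-a)$ lies in $\mathscr{B}_X$. Over the generic point of $X$, the section $b$ is therefore a $0$-cycle of degree $n$ in the cotangent fibre whose elementary symmetric functions vanish except the last. I will read off local factorization data from this. Concretely, for every point $x\in X$ there is a neighbourhood (étale or formal) where $a$ behaves like an $n$-th power of a $1$-form: $a = \omega^n$ in $S^n\Omega^1_X$. Indeed, the roots of $t^n - a$ are $\zeta^i\lambda$ for a generic root $\lambda$, and the Chow condition forces $\lambda$ to be a rational $1$-form on a cover. Since $a\neq 0$, the cycle is generically multiplicity free, so $b\in\mathscr{B}^\heartsuit_X$. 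By the cited result of Chen and Ng\^o, it then suffices to produce a Higgs bundle, equivalently a maximal Cohen--Macaulay sheaf of generic rank one on $X_a$.

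My plan is to build an explicit Higgs bundle globally in the following way. First, I will prove a structural lemma: $a\in\mathscr{B}^{\rm cyc,\times}_X$ if and only if there is a line bundle $L$, a section $s\in H^0(X,L^{n})$ defining the ramification data, and a twisted $1$-form $\omega\in H^0(X,\Omega^1_X\otimes L^{-1})$ with $a=s\,\omega^n$. After passing to the normalization of the spectral cover, this should reduce to a statement of the form "$a = \omega^{\otimes n}$ for a $1$-form with values in some line bundle $M$ with $M^{n}\cong\mathcal O$ twisted by a divisor". I expect to prove this by studying the reduced spectral cover $\widetilde X\to X$ near codimension one points, then extending by reflexivity: both sides are sections of locally free sheaves, and $X$ is smooth, so Hartogs extension applies.

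Given such a factorization, I will write down the cyclic Higgs bundle
\[
E=\mathcal O\oplus M^{-1}\oplus\cdots\oplus M^{-(n-1)},
\]
where $\theta$ maps $M^{-i}\to M^{-i-1}\otimes\Omega^1_X$ via $\omega$ and the last entry closes the cycle via $s\omega$. A companion-matrix computation gives characteristic polynomial $t^n - a$. The integrability condition $\theta\wedge\theta=0$ holds because every component is a multiple of the single form $\omega$ (locally $\omega$ is a rank-one $1$-form), so the components commute.

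The main obstacle is the structural lemma, i.e.\ extracting a global (twisted) $n$-th root $\omega$ of $a$ from the Chow-section condition. The form $\lambda$ is a priori only defined on a cyclic cover, and could pick up monodromy under $\lambda\mapsto\zeta\lambda$; this monodromy is exactly what the line bundle $M$ absorbs. If a global root fails to exist, I would instead work on the normalization $\nu:\widetilde X_a\to X_a$. The plan in that case is to show that $\widetilde X_a$ is Cohen--Macaulay in codimension where needed, and then take $\nu_*\mathcal O_{\widetilde X_a}$, or its reflexive hull, pushed to $X$. Reflexive rank-$n$ sheaves on a smooth variety are locally free only in codimension two, so the remaining check would be that cyclic covers branched along $s=0$ give locally free pushforwards. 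This holds because $\nu_*\mathcal O=\bigoplus M^{-i}$ for a normal cyclic cover built from $M$ and $s$.
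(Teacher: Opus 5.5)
Your main route is the paper's second proof. You factor $a=\tau\alpha^n$ with $\tau\in H^0(X,\mathscr{L}^n)$ and $\alpha\in H^0(X,\Omega^1_X\otimes\mathscr{L}^{-1})$, then put the companion-type Higgs field on $\mathcal{O}_X\oplus\mathscr{L}^{-1}\oplus\cdots\oplus\mathscr{L}^{-(n-1)}$. That last step (integrability, characteristic polynomial $t^n-a$) is fine. The genuine gap is that the factorization lemma, which carries all the content, is never proved. You only say it ``should reduce'' to a loosely formulated statement after passing to a normalization. Your fallback builds a cyclic cover ``from $M$ and $s$'', which presupposes exactly the lemma it is meant to replace.

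Two auxiliary claims are also wrong. First, \'etale or formally locally one only gets $a=g\,\omega^n$, not $a=\omega^n$: for $n\ge 2$ take $a=x_1(dx_2)^n$ on $\mathbb{A}^2$ at the origin, where $x_1$ has no $n$-th root in $k[[x_1,x_2]]$. Second, $\nu_*\mathcal{O}=\bigoplus M^{-i}$ holds only when ${\rm div}(s)$ is reduced; in general the normalization is $\bigoplus_i M^{-i}(\lfloor i\,{\rm div}(s)/n\rfloor)$.

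The missing argument needs no cover; it has two steps.

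(i) Let $K=k(X)$. Every $\sigma\in{\rm Gal}(\bar K/K)$ satisfies $\sigma(\lambda)^n=a=\lambda^n$ in the polynomial ring $S^\bullet\Omega^1_{\bar K}$, hence $\sigma(\lambda)=\zeta^k\lambda$. So the ratios $\lambda_j/\lambda_i$ of the coordinates of $\lambda$ in a $K$-basis of $\Omega^1_K$ (with $\lambda_i\neq0$) are Galois-invariant. Thus $v:=\lambda/\lambda_i$ lies in $\Omega^1_K$, and $a_K=c\,v^n$ with $c\in K^\times$. The monodromy you worry about only rescales $\lambda$ and is invisible on the line $Kv$. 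The paper phrases this as $[a_K]$ being a $K$-point on the Veronese image of $\mathbb{P}(T^*_K)$, a closed subscheme defined over $K$.

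(ii) Set $\mathscr{L}:=\Omega^1_X\cap Kv$. It is a rank-one reflexive, hence invertible, sheaf with saturated inclusion $\alpha$. Then $\tau:=a/\alpha^n$ is a rational section of $\mathscr{L}^n$ with no poles in codimension one. Indeed, at a codimension-one point saturation makes $\alpha$ nonzero modulo the uniformizer. Since the symmetric algebra over the residue field is a domain, $\alpha^n$ is not divisible by the uniformizer, so a pole of $\tau$ would be a pole of $a$. Hartogs extends $\tau$. Then $\bigoplus_{i<n}\mathscr{L}^{-i}$ is already locally free, so no normalization is needed.

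Your fallback can also be made self-contained, and that is the paper's first proof. In each UFD $\mathcal{O}_{X,x}$ the relations $a_\alpha a_\beta=a_\gamma a_\delta$ give $a_\alpha=g\,\underline{u}^\alpha$. The normalization of $(X_a)_{\rm red}$ is then free over $\mathcal{O}_{X,x}$ with basis $s^i/q_i$. So its pushforward is maximal Cohen--Macaulay of generic rank one on $X_a$, without any global factorization.
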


We want to emphasize that the theorem does not require $X$ to be proper. Moreover, although Conjecture \ref{conj_CN5.2} is stated for proper smooth varieties, it is false for smooth non-proper varieties, which was pointed out to us by Jie Liu and Jinxing Xu in private communications. 

In \S\ref{sect_Hit_sect}, we provide a second proof of Theorem \ref{thm_nonempty} by generalizing He--Liu's approach \cite[\S3 and \S4]{HL24} to higher rank. As a by-product, we construct a Cohen-Macaulayfication $X'_a$ of $X_a$ and prove an analogue of the Beauville--Narasimhan--Ramanan correspondence.
\begin{thm}[Theorem \ref{thm_BNR_corr}]
Let $a$ be nonzero cyclic spectral data. The Hitchin fiber $h^{-1}_X(a)$ is isomorphic to the stack $\mathcal{MCM}_1(X'_a)$ of maximal Cohen-Macaulay sheaves of generic rank one on $X'_a$.
\end{thm}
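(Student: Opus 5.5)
We construct $X'_a$ from the cyclic structure of $a$, show it is Cohen--Macaulay and finite flat of degree $n$ over $X$, and then run the Beauville--Narasimhan--Ramanan argument relative to $X'_a$.

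\textbf{Step 1: the local normal form of $a$.} Since $a$ is nonzero cyclic spectral data, $b\in\mathscr{B}_X$ forces the characteristic polynomial $t^n-a$ to split pointwise into linear factors in $\Omega^1_X$. Over the generic point, $a=\omega^n$ for some rational $1$-form $\omega$, and the $n$ eigenvalues are $\zeta\omega$ for $\zeta\in\mu_n$. Here we use that $k$ is algebraically closed of characteristic zero. Because $a\neq 0$ and $X$ is connected, $a$ is nonzero on a dense open set, so $b\in\mathscr{B}^{\heartsuit}_X$ and the cover is generically étale.

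More precisely, one can write $a=f\cdot\eta^{n}$ locally, where $\eta$ is a saturated local section of $\Omega^1_X$ and $f$ is a regular function. The line subbundle $L\subseteq\Omega^1_X$ generated by $\eta$ is defined globally, as the saturation of the rank one subsheaf spanned by the eigenvalues. Hence $a$ is the image of a section $s\in H^0(X,L^{n})$ under $L^n\to S^n\Omega^1_X$.

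\textbf{Step 2: the Cohen--Macaulayfication.} Define $X'_a\subseteq{\rm tot}(L)$ by the equation $t^n=s$. This is a cyclic cover: a hypersurface in a smooth variety, hence Gorenstein and in particular Cohen--Macaulay. It is finite flat of degree $n$ over $X$ with $\pi_*\mathcal{O}_{X'_a}=\bigoplus_{i=0}^{n-1}L^{-i}$.

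The inclusion ${\rm tot}(L)\hookrightarrow T^*_X$ induces a finite morphism $X'_a\to X_a$. This morphism is an isomorphism over the locus where $s\neq 0$, and more generally wherever $X_a$ is already Cohen--Macaulay. This gives the promised Cohen--Macaulayfication.

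\textbf{Step 3: the BNR correspondence.} Given $\mathcal{F}\in\mathcal{MCM}_1(X'_a)$, the pushforward $E=\pi_*\mathcal{F}$ is locally free of rank $n$, because a maximal Cohen--Macaulay module over a finite flat cover of a regular scheme is free. The tautological section $t$ of $\pi^*L\subseteq\pi^*\Omega^1_X$ then gives a Higgs field $\theta:E\to E\otimes L\subseteq E\otimes\Omega^1_X$. This field satisfies $\theta\wedge\theta=0$ automatically, since it takes values in a line bundle, and its characteristic polynomial is $t^n-a$, since $\mathcal{F}$ has generic rank one.

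Conversely, let $(E,\theta)$ be a Higgs bundle with $h_X(E,\theta)=a$.
\begin{enumerate}[(i)]
\item First show that $\theta$ factors through $E\otimes L$. Generically $\theta$ is diagonalizable with eigenvalues $\zeta\omega\in L$, so $\theta$ lies in $E\otimes L$ generically. Because $L$ is saturated in $\Omega^1_X$ and $E$ is locally free, $E\otimes L$ is saturated in $E\otimes\Omega^1_X$, and the factorization holds everywhere.
\item Then $E$ becomes a module over $\mathcal{O}_X[t]/(t^n-s)$ by Cayley--Hamilton, i.e.\ a sheaf $\mathcal{F}$ on $X'_a$.
\item This $\mathcal{F}$ is maximal Cohen--Macaulay because $\pi_*\mathcal{F}=E$ is locally free and $\pi$ is finite. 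It has generic rank one by comparing degrees over the generic point.
\end{enumerate}
These constructions are mutually inverse and work in families, which gives an isomorphism of stacks.

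\textbf{Main obstacle.} The hard step is Step 1, specifically showing that the eigen-line subbundle $L$ exists globally and is saturated, so that $s$ is a regular section of $L^n$ and is not merely rational. Near points where $a$ vanishes to high order, the local normal form $a=f\eta^n$ must be deduced from the condition that $b$ lands in the Chow variety. I would first try to prove it by taking the generic eigenvalue $\omega$, which is well defined up to $\mu_n$, and noting that the line it spans is independent of the choice of root. I would then take $L$ to be the saturation of $\mathcal{O}\omega\cap\Omega^1_X$. Finally I would check that $a\in H^0(L^n)$ using reflexivity of $L^n$ on the smooth variety $X$ and the fact that the two sections agree in codimension one.
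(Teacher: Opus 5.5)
Your proposal is correct and follows essentially the paper's argument. Your saturated line $L$ with $a=s\alpha^n$ is the triple $(\mathscr{L},\tau,\alpha)$ of Corollary~\ref{cor_decomp}, and your hypersurface $t^n=s$ in ${\rm tot}(L)$ is $X'_a=\Spec\,\mathcal{A}_\tau$. Both directions of the correspondence are handled as in the paper: pushforward together with the tautological section one way, and the other way factoring $\theta$ through $E\otimes\mathscr{L}$ because $\Omega^1_X/\alpha(\mathscr{L})$ is torsion-free, then deducing $\vartheta^n=\tau$.

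Two side remarks are inaccurate, though neither is used. First, over $K$ one only has $a=c\,\omega^n$ with $c\in K^\times$; an honest $n$-th power exists only over $\bar K$. Second, $X'_a\to X_a$ is an isomorphism over $\{a\neq 0\}$, not over $\{s\neq 0\}$. At zeros of $\alpha$, which lie in codimension $\geq 2$, the $n$ points of $X'_a$ map to a single non-reduced, non-Cohen--Macaulay point of $X_a$; see Example~\ref{exmp_spectral_cover_2}(1), where $s=1$. In particular ${\rm tot}(L)\to T^*_X$ is not an inclusion there.
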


This theorem recovers Chen--Ng\^o's result in the rank-two cyclic case  \cite[Theorem 7.3]{CN20} and $X'_a$ is exactly the Cohen-Macaulay spectral surface $X^{\rm CM}_a$ constructed in \cite[Proposition 7.2]{CN20}. When the rank of Higgs bundles is two, this result recovers He--Liu's result \cite[Theorem 4.6]{HL24}. 

The next result in this paper is to construct analogues of the Hitchin section \cite{Hit92}. As a generalization of Bogomolov--De Oliveira \cite[Theorem 2.3]{BdO11} and He--Liu's result \cite[Proposition 3.6]{HL24}, any nonzero cyclic spectral data $a$ determines a triple $(\mathscr{L},\tau,\alpha)$, where $\mathscr{L}$ is a line bundle, $\tau \in H^0(X,\mathscr{L}^n)$ and $\alpha \in H^0(X,\Omega_X^1 \otimes \mathscr{L}^{-1})$, such that $a=\tau \alpha^n$ and $\alpha$ does not vanish in codimension one (Proposition \ref{prop_sym_diff} and Corollary \ref{cor_decomp}). Moreover, this triple is unique up to equivalence. Based on the corresponding triple $(\mathscr{L},\tau,\alpha)$ of $a$, we define a set-theoretic map
\begin{align*}
    \mathscr{B}^{\rm cyc,\times}_X(k) \rightarrow \mathscr{M}^{\rm cyc}_X(k), \quad a \mapsto (E_a,\theta_a),
\end{align*}
where $\mathscr{M}^{\rm cyc,\times}_X := \mathscr{M}_X \times_{\mathscr{B}_X} \mathscr{B}^{\rm cyc,\times}_X$ is the moduli stack of cyclic Higgs bundles,
\begin{align*}
    E_a: = \mathcal{O}_X \oplus \mathscr{L}^{-1} \oplus \dots \oplus \mathscr{L}^{-(n-1)}.
\end{align*}
and
\begin{align*}
\theta_a = 
\begin{pmatrix}
     & \alpha & & \\
    &  & \ddots & \\
    & & & \alpha \\
    \tau \alpha & & & \\
\end{pmatrix}.
\end{align*}
We try to make this set-theoretic map into algebraic morphisms. We define
\begin{align*}
    U_{\mathscr{L}}: = H^0(X,\mathscr{L}^n), \, W_{\mathscr{L}} := H^0(X, \Omega_X^1 \otimes \mathscr{L}^{-1}),
\end{align*}
and let $W^{\rm sat}_\mathscr{L} \subseteq W_\mathscr{L}$ be the subset that consists of sections $\alpha$ that do not vanish in codimension one. We introduce
\begin{align*}
   V_{\mathscr{L}}^{\rm sat,\times} :=  (U_{\mathscr{L}} \backslash \{0\} ) \times W^{\rm sat}_\mathscr{L}
\end{align*}
with a $\mathbb{G}_m$-action defined as
\begin{align*}
    c \cdot (\tau,\alpha) : = (c^{-n} \tau, c \alpha),
\end{align*}
With respect to the above notations, we define a set of line bundles
\begin{align*}
    \mathcal{P}_X := \{ \mathscr{L} \in {\rm Pic}(X) \, | \,  U_\mathscr{L} \neq 0, \, W^{\rm sat}_\mathscr{L} \neq 0  \}.
\end{align*}

\begin{thm}[Theorem \ref{thm_stack_Hit_sect}]
For each $\mathscr{L} \in \mathcal{P}_X$, there exists a morphism $\bar{s}_{\mathscr{L}}: [V^{\rm sat,\times}_{\mathscr{L}} / \mathbb{G}_m] \rightarrow \mathscr{M}^{\rm cyc,\times}_X$ such that $\bar{q}_{\mathscr{L}} = {\rm sd}_X \circ \bar{s}_{\mathscr{L}}$, i.e.,
\begin{center}
\begin{tikzcd}
{[V_{\mathscr{L}}^{\mathrm{sat}, \times} / \mathbb{G}_m]} \arrow[r,"\bar{s}_\mathscr{L}", dotted] \arrow[rd,"\bar{q}_\mathscr{L}"] & \mathscr{M}^{\rm cyc,\times}_X \arrow[d, "{\rm sd}_X"] \\
& \mathscr{B}^{\rm cyc, \times}_X \, .
\end{tikzcd}
\end{center}
\end{thm}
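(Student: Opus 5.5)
First I construct a family of Higgs bundles on $X \times V^{\rm sat,\times}_{\mathscr{L}}$ carrying a $\mathbb{G}_m$-equivariant structure. That gives a morphism $V^{\rm sat,\times}_{\mathscr{L}} \to \mathscr{M}_X$ which descends to the quotient stack. Write $V = V^{\rm sat,\times}_{\mathscr{L}}$ and $p: X \times V \to X$. Over $X\times V$ there are tautological sections $\tau^{\rm univ} \in H^0(X\times V, p^*\mathscr{L}^n)$ and $\alpha^{\rm univ}\in H^0(X\times V, p^*(\Omega^1_X\otimes\mathscr{L}^{-1}))$. These come from the evaluation maps $U_{\mathscr{L}}\otimes\mathcal{O}\to\mathscr{L}^n$ and $W_{\mathscr{L}}\otimes\mathcal{O}\to\Omega^1_X\otimes\mathscr{L}^{-1}$. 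Set $\mathcal{E} = p^*E_a = \bigoplus_{i=0}^{n-1} p^*\mathscr{L}^{-i}$, and let $\Theta$ be the cyclic matrix with $\alpha^{\rm univ}$ on the superdiagonal and $\tau^{\rm univ}\alpha^{\rm univ}$ in the lower-left corner.

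Next I check that $\Theta\wedge\Theta=0$. Every entry is a multiple of the single $1$-form-valued section $\alpha^{\rm univ}$, so each entry of $\Theta\wedge\Theta$ is a sum of terms $f\,\alpha\wedge\alpha=0$. Hence $(\mathcal{E},\Theta)$ is a family of Higgs bundles and defines $s_{\mathscr{L}}: V\to\mathscr{M}_X$. A direct computation gives $\Theta^n = \tau\alpha^n\cdot{\rm Id}$, and the characteristic polynomial is $t^n-\tau\alpha^n$. So $h_X\circ s_{\mathscr{L}}$ is the morphism $(\tau,\alpha)\mapsto(0,\dots,0,-\tau\alpha^n)$, up to the sign convention in Definition~\ref{defn_cyc_spec_data}. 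Since $h_X$ factors through ${\rm sd}_X$ and $\iota_X$ is a closed immersion, ${\rm sd}_X\circ s_{\mathscr{L}}$ is the unique lift of this map to $\mathscr{B}_X$. It lands in $\mathscr{B}^{\rm cyc}_X$. It avoids $0$ because $\tau\neq0$, $\alpha\neq0$ and $X$ is integral, so $\tau\alpha^n\neq 0$. Therefore $s_{\mathscr{L}}$ factors through $\mathscr{M}^{\rm cyc,\times}_X$, and ${\rm sd}_X\circ s_{\mathscr{L}}=q_{\mathscr{L}}$, where $q_{\mathscr{L}}$ is the map inducing $\bar q_{\mathscr{L}}$.

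For the descent, I give $(\mathcal{E},\Theta)$ a $\mathbb{G}_m$-linearization. Let $c$ act on the summand $p^*\mathscr{L}^{-i}$ by $c^{i}$; the exact weights are to be adjusted. I then check that conjugating $\Theta$ by $g_c={\rm diag}(1,c,\dots,c^{n-1})$ carries $(c^{-n}\tau,c\alpha)$ back to $(\tau,\alpha)$. The superdiagonal entries scale by $c\cdot c^{-1}$ relative to adjacent weights. The corner entry picks up $c^{-n}\cdot c\cdot c^{n-1}=1$. So $g_c$ is an isomorphism $(c\cdot)^*(\mathcal{E},\Theta)\cong(\mathcal{E},\Theta)$. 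The cocycle condition $g_{cc'}=g_c\,(c^*g_{c'})$ holds because the $g_c$ form a group homomorphism $\mathbb{G}_m\to{\rm Aut}(\mathcal{E})$ that is constant along $V$. An equivariant family on $V$ is exactly a morphism $[V/\mathbb{G}_m]\to\mathscr{M}_X$, and this gives $\bar s_{\mathscr{L}}$. Compatibility with $\bar q_{\mathscr{L}}$ follows because $\mathscr{B}^{\rm cyc,\times}_X$ is a scheme and ${\rm sd}_X\circ s_{\mathscr{L}}=q_{\mathscr{L}}$ is $\mathbb{G}_m$-invariant, since $\tau\alpha^n$ is invariant. So both composites factor through the same map from the coarse side.

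The main obstacle is making this equivariance bookkeeping precise: choosing the weights and the direction of conjugation so that the $2$-morphism data is consistent, and confirming that the factorization through ${\rm sd}_X$ holds scheme-theoretically over the base $V$. The latter I would handle by appealing to the Chen--Ng\^o construction of ${\rm sd}_X$ for arbitrary families, as in \cite{CN20}. The hypothesis that $\alpha$ is saturated is not needed for the construction itself. It only ensures, via Corollary~\ref{cor_decomp}, that $\bar q_{\mathscr{L}}$ is well behaved.
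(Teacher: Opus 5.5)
Your proposal is correct and follows essentially the same route as the paper: you put the cyclic Higgs field $\theta_{\tau,\alpha}$ on the fixed bundle $E_{\mathscr{L}}=\bigoplus_{i=0}^{n-1}\mathscr{L}^{-i}$ over $V^{\rm sat,\times}_{\mathscr{L}}$, then descend to $[V^{\rm sat,\times}_{\mathscr{L}}/\mathbb{G}_m]$ using the diagonal gauge transformations $\mathrm{diag}(1,c,\dots,c^{n-1})$, whose multiplicativity supplies the cocycle condition. You also make explicit several checks the paper leaves implicit, and each is sound: the universal family on $X\times V$, the vanishing $\Theta\wedge\Theta=0$, the characteristic polynomial $t^n-\tau\alpha^n$, and the identity ${\rm sd}_X\circ\bar s_{\mathscr{L}}=\bar q_{\mathscr{L}}$, which follows from $\iota_X$ being a monomorphism and $\tau\alpha^n$ being $\mathbb{G}_m$-invariant.
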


In the projective case, $[V^{\rm sat,\times}_{\mathscr{L}} / \mathbb{G}_m]$ is represented by a reduced scheme $\mathscr{B}^{\rm cyc}_\mathscr{L}$ (Lemma \ref{lem_B_cyc_L}), which is a locally closed subscheme of $\mathscr{B}^{\rm cyc,\times}_X$, and we have the following result as a direct application of the above theorem.

\begin{thm}[Theorem \ref{thm_proj_Hit_sect}]
For each $\mathscr{L} \in \mathcal{P}_X$, there exists a morphism $s_{\mathscr{L}} : \mathscr{B}^{\rm cyc}_{\mathscr{L}} \rightarrow \mathscr{M}^{\rm cyc,\times}_X$ such that ${\rm sd}_X \circ s_{\mathscr{L}} = \iota_\mathscr{L}$.
\begin{center}
\begin{tikzcd}
{[V_{\mathscr{L}}^{\mathrm{sat}, \times} / \mathbb{G}_m]} \arrow[r,"\bar{s}_\mathscr{L}"] \arrow[d,"\cong"] & \mathscr{M}^{\rm cyc,\times}_X \arrow[d, "{\rm sd}_X"] \\
\mathscr{B}^{\rm cyc}_{\mathscr{L}} \arrow[r, hook, "\iota_\mathscr{L}"] \arrow[ru, "s_\mathscr{L}" description, dotted] & \mathscr{B}^{\rm cyc, \times}_X \, .
\end{tikzcd}
\end{center}
\end{thm}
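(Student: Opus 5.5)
The plan is to obtain $s_{\mathscr{L}}$ by descending $\bar{s}_{\mathscr{L}}$ along the isomorphism $[V^{\rm sat,\times}_{\mathscr{L}}/\mathbb{G}_m] \cong \mathscr{B}^{\rm cyc}_{\mathscr{L}}$ of Lemma \ref{lem_B_cyc_L}. Let $\phi$ denote this isomorphism, normalised so that $\iota_{\mathscr{L}} \circ \phi = \bar{q}_{\mathscr{L}}$. We then define $s_{\mathscr{L}} := \bar{s}_{\mathscr{L}} \circ \phi^{-1}$. By Theorem \ref{thm_stack_Hit_sect} we get
\begin{align*}
{\rm sd}_X \circ s_{\mathscr{L}} = {\rm sd}_X \circ \bar{s}_{\mathscr{L}} \circ \phi^{-1} = \bar{q}_{\mathscr{L}} \circ \phi^{-1} = \iota_{\mathscr{L}},
\end{align*}
so both triangles in the diagram commute. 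The argument is therefore formal once the identification in Lemma \ref{lem_B_cyc_L} is compatible with $\bar{q}_{\mathscr{L}}$.

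The key steps are as follows.

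\begin{enumerate}[(1)]
\item Recall how $\mathscr{B}^{\rm cyc}_{\mathscr{L}}$ is built. It is the image of $V^{\rm sat,\times}_{\mathscr{L}}$ under $(\tau,\alpha) \mapsto \tau\alpha^n$, with its reduced structure, inside $\mathscr{B}^{\rm cyc,\times}_X \subseteq H^0(X,S^n\Omega^1_X)\setminus\{0\}$. This map is $\mathbb{G}_m$-invariant, since $(c^{-n}\tau)(c\alpha)^n = \tau\alpha^n$, so it factors through $\bar{q}_{\mathscr{L}}$.
\item Check that the isomorphism $\phi$ in Lemma \ref{lem_B_cyc_L} is exactly the factorization of $\bar{q}_{\mathscr{L}}$ through $\mathscr{B}^{\rm cyc}_{\mathscr{L}}$, i.e.\ that $\iota_{\mathscr{L}} \circ \phi = \bar{q}_{\mathscr{L}}$. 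This holds because both maps are induced by the same invariant morphism $(\tau,\alpha) \mapsto \tau\alpha^n$.
\item Compose as above and verify the commutativity.
\end{enumerate}

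The only genuine content is step (2), i.e.\ that the isomorphism really is induced by $\bar{q}_{\mathscr{L}}$. If Lemma \ref{lem_B_cyc_L} only gives an abstract isomorphism, I would re-derive it. By the uniqueness up to equivalence of the triple $(\mathscr{L},\tau,\alpha)$ (Proposition \ref{prop_sym_diff} and Corollary \ref{cor_decomp}), $\bar{q}_{\mathscr{L}}$ is injective on $k$-points, and it has trivial stabilizers because $\mathbb{G}_m$ acts freely ($\alpha \neq 0$). In characteristic zero this makes $\bar{q}_{\mathscr{L}}$ a monomorphism onto its reduced locally closed image. Here projectivity ensures that the relevant Hom spaces are finite-dimensional, so the image is a constructible, hence locally closed, subscheme. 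I expect the main obstacle to be the scheme-theoretic part: showing that the map to the reduced image is an isomorphism and not just a bijective monomorphism. For this I would use that the source is a smooth, hence normal, quotient (an open subset of the quotient of a vector space by a free $\mathbb{G}_m$-action), so that Zariski's main theorem applies once the image with its reduced structure is normal.
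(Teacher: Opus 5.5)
Your proposal is correct and is essentially the paper's proof. The paper also declares the theorem a direct consequence of Lemma \ref{lem_B_cyc_L} and Theorem \ref{thm_stack_Hit_sect}, defining $s_{\mathscr{L}}$ as $\bar{s}_{\mathscr{L}}$ composed with the inverse of that isomorphism, and it merely unwinds this explicitly in the two cases $\dim W_{\mathscr{L}}=1$ (where $\mathscr{B}^{\rm cyc}_{\mathscr{L}}\cong U_{\mathscr{L}}\setminus\{0\}$) and $\dim U_{\mathscr{L}}=1$ (where $\mathscr{B}^{\rm cyc}_{\mathscr{L}}\cong W^{\rm sat}_{\mathscr{L}}/\mu_n$). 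Your fallback paragraph is unnecessary, since Lemma \ref{lem_B_cyc_L} already states that the isomorphism is induced by $\bar{q}_{\mathscr{L}}$; as sketched it would not stand on its own anyway, because constructible does not imply locally closed, and injectivity on $k$-points with trivial stabilizers does not yield a monomorphism.
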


We do not know whether the set-theoretic map
\begin{align*}
    \mathscr{B}^{\rm cyc,\times}_X(k) \rightarrow \mathscr{M}_X(k), \quad a \mapsto (E_a,\theta_a).
\end{align*}
comes from an algebraic morphism or not. We give an example in Remark \ref{rem_non-alg} discussing the difficulty.

\vspace{2mm}
{\bf Acknowledgement.} The author would like to thank Jie Liu and Jinxing Xu for helpful discussions. This project is funded by the National Key R\&D Program of China (No. 2022YFA1006600). 

\vspace{2mm}

{\bf Disclosure of AI.} The AI tools (ChatGPT, gpt-5.6-sol and Deepseek, deepseek-V4-Pro) were used to provide examples, proofread the paper and improve the exposition. The authors are responsible for all assertions in this paper.

\section{Non-emptiness of the Hitchin Fiber of Cyclic Spectral Data}\label{sect_nonempty}

Recall that $X$ is a connected smooth variety of dimension $d$. Let $x \in X$ be a point and $R:=\mathcal{O}_{X,x}$ a regular local ring with maximal ideal $\mathfrak{m}$. Now we will work on this regular local ring $(R, \mathfrak{m})$ and use the notations $\Omega_R^1:=\Omega^1_{\Spec \, R}$ and $T^*_R := T^*_{\Spec \, R}$. Note that $\Omega^1_R$ is a free $R$-module, and we then fix a basis $\{\omega_1,\dots,\omega_d\}$ with fiber coordinates $\{t_1,\dots,t_d\}$. Therefore, $T^*_R \cong \Spec \, R[t_1,\dots,t_d]$. 

To simplify notation, we define $\underline{t}:=(t_1,\dots,t_d)$ so that $T^*_{R} \cong R[\underline{t}]$. Let $\alpha = (\alpha_1,\dots,\alpha_d)$ be a $d$-tuple of non-negative integers. We define $|\alpha|:= \alpha_1 + \dots + \alpha_d$, and the sum of two $d$-tuples $\alpha$ and $\beta$ is defined as 
\begin{align*}
    \alpha + \beta = (\alpha_1 + \beta_1, \dots , \alpha_d + \beta_d).
\end{align*}
With respect to the above notation, we define
\begin{align*}
    \underline{t}^\alpha := t_1^{\alpha_1} \dots t_d^{\alpha_d}.
\end{align*}

Let $a \in H^0(X,S^n \Omega_X^1)$ be a section. We obtain a section $a_R \in H^0(\Spec \, R, S^n \Omega_R^1)$ by pulling back $a$ via the canonical morphism $\Spec \, R \rightarrow X$. Since we will focus on the case of regular local rings, if there is no ambiguity, we still use the notation $a:=a_R$ for simplicity. Then, we write $a$ as
\begin{align*}
    a = \sum_{|\alpha| = n} {n \choose \alpha} a_{\alpha} \underline{\omega}^{\alpha},
\end{align*}
where ${n \choose \alpha} := {n \choose \alpha_1 \, \dots \, \alpha_d}$ and $a_{\alpha} \in R$ for each $\alpha$. 

\begin{lem}\label{lem_cond_*}
If $a = \sum_{|\alpha| = n} {n \choose \alpha} a_{\alpha} \underline{\omega}^{\alpha}$ is cyclic spectral data, it satisfies
\begin{equation}\tag{$\ast$}\label{eq_cond}
    a_{\alpha} a_{\beta} = a_{\gamma} a_{\delta},
\end{equation}
whenever $\alpha+\beta = \gamma +\delta$.
\end{lem}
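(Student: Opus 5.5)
Since $a$ is cyclic spectral data, the corresponding $b\in\mathscr{B}_X(k)$ is a section of ${\rm Chow}_n(T^*_X/X)$ whose characteristic polynomial is $t^n - a$. The plan is to compute what this means at the generic point and then transfer the resulting identity to $R$.

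\textbf{Step 1: reduce to a field.} The ring $R$ is a domain, so it embeds into its fraction field $K$, and it embeds further into an algebraic closure $\bar K$. Identity (\ref{eq_cond}) is an equality in $R$, so it suffices to check it in $\bar K$.

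\textbf{Step 2: factor $a$ into linear forms.} Restricting $b$ to the geometric generic point gives a $\bar K$-point of ${\rm Chow}_n$ of the fiber $\bar K^d$. Such a point is a $0$-cycle $\sum_{i=1}^n [v_i]$ with $v_i\in\bar K^d$. The coefficients of the characteristic polynomial are the elementary symmetric functions of the linear forms $\ell_i:=\langle v_i,\underline{t}\rangle$. Concretely, the map ${\rm Chow}_n\to\mathscr{A}$ sends the cycle to $\prod_i (T - \ell_i(\underline{t}))$, read in $S^\bullet$ of the fiber coordinates. Cyclicity says that $e_1,\dots,e_{n-1}$ of the $\ell_i$ vanish and that $(-1)^{n+1} e_n = a$, up to the sign convention. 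So in $\bar K[\underline{t}]$ we have
\begin{align*}
\prod_{i=1}^n (T-\ell_i) = T^n - a(\underline{t}).
\end{align*}

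\textbf{Step 3: show all $\ell_i$ are proportional.} The $\ell_i$ are exactly the roots of $T^n - a$ in the UFD $\bar K[\underline{t}]$. If $a=0$, then every $\ell_i=0$ and (\ref{eq_cond}) is trivial. Otherwise, fix $\ell:=\ell_1\neq 0$. Every other root satisfies $\ell_i^n=\ell^n$, and since $\bar K(\underline{t})$ is a field, $\ell_i=\zeta_i\ell$ for some $n$-th root of unity $\zeta_i$. Hence $a=c\,\ell^n$ for a constant $c\in\bar K$, and the constant can be absorbed to write $a=\lambda^n$ with $\lambda=\sum_j \lambda_j t_j$ (using $\omega_j\leftrightarrow t_j$).

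\textbf{Step 4: compare coefficients.} Expand with the multinomial theorem:
\begin{align*}
\lambda^n=\sum_{|\alpha|=n}{n\choose\alpha}\underline{\lambda}^\alpha\underline{t}^\alpha.
\end{align*}
Comparing with the given normalization, and using characteristic zero so that the multinomial coefficients are nonzero, gives $a_\alpha=\underline{\lambda}^\alpha$. Therefore, whenever $\alpha+\beta=\gamma+\delta$,
\begin{align*}
a_\alpha a_\beta=\underline{\lambda}^{\alpha+\beta}=\underline{\lambda}^{\gamma+\delta}=a_\gamma a_\delta.
\end{align*}
This holds in $\bar K$, hence in $R$ by Step 1.

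\textbf{Main obstacle.} The delicate step is Step 2: correctly identifying $\iota_X$ at a geometric point with the elementary symmetric functions of the linear forms $\ell_i$. This amounts to unwinding the Chen--Ngô definition of ${\rm Chow}_n$ as $T^*\times\dots\times T^*/\mathfrak{S}^n$. I would do it by noting that the invariant ring is generated (in characteristic zero) by the coefficients of $\prod_i(T-\ell_i)$, so a geometric point of the quotient lifts to an $n$-tuple of points. The remaining steps are routine.
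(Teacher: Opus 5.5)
Your proof is correct and follows the paper's argument. Both base change to $\bar K$, lift the Chow point to $n$ covectors with $\prod_i (t-v_i)=t^n-a$, deduce that $a$ is the $n$-th power of a linear form, compare coefficients via the multinomial theorem, and pull the identity back along $R\hookrightarrow \bar K$. Your Step 3 only spells out a step the paper asserts without justification: since $\ell_1$ is a root of $T^n-a$, evaluating gives $a=\ell_1^n$ directly.
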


\begin{proof}
Let $\bar{K}$ be the algebraic closure of the fraction field $K={\rm Frac} \, R$. Since $a = \sum_{|\alpha| = n} {n \choose \alpha} a_{\alpha} \underline{\omega}^{\alpha}$ is spectral data, we obtain a point $b_{\bar{K}} \in {\rm Chow}_n(T^*_{\bar{K}} / \bar{K})$ by base change to $\bar{K}$. Since $\bar{K}$ is algebraically closed, the point $b_{\bar{K}}$ can be represented by an unordered $n$-tuple $[v_1,\dots,v_n]$, where $v_i \in T^*_{\bar{K}}$. Note that $\prod\limits_{i=1}^n (t-v_i) = t^n -a$. Therefore, there exist $w_1, \dots,w_d \in \bar{K}$ such that $a = (w_1 \omega_1 + \dots + w_d \omega_d)^n$. By the multinomial theorem, $a_{\alpha} = \underline{w}^\alpha$, where $\underline{w}=(w_1,\dots,w_d)$. Therefore, $a_{\alpha} a_{\beta} = \underline{w}^\alpha \underline{w}^\beta = \underline{w}^{\alpha+\beta}$. Since $\alpha+\beta = \gamma +\delta$, the equality $a_{\alpha} a_{\beta} = a_{\gamma} a_{\delta}$ holds in $\bar{K}$. Note that $a_{\alpha} \in R$ for all $|\alpha|=n$, we obtain the desired equality.
\end{proof}

\begin{rem}\label{rem_isom}
Recall that $T^*_R = \Spec \, R[\underline{t}]$ and ${\rm tot}(S^n \Omega_R^1) = \Spec \, R[t_\alpha, |\alpha| = n]$. The condition \eqref{eq_cond} defines a closed subscheme
\begin{align*}
    V_n(\Omega_R^1):=\Spec \, R[t_\alpha, |\alpha| = n] / \langle t_\alpha t_\beta - t_\gamma t_\delta, \alpha + \beta = \gamma + \delta \rangle \subseteq{\rm tot}(S^n \Omega_R^1).
\end{align*}
The spectral data $a: \Spec \, R \rightarrow S^n \Omega_R^1$ always factors through $V_n(\Omega_R^1)$. Moreover, there is a morphism
\begin{align*}
    R[t_\alpha, |\alpha|=n] \rightarrow R[\underline{t}], \quad t_\alpha \mapsto \underline{t}^\alpha,
\end{align*}
and the induced morphism
\begin{align*}
    T^*_R \rightarrow {\rm tot}(S^n \Omega_R^1), \quad v \mapsto v^n
\end{align*}
factors through $V_n(\Omega_R^1)$ \cite[\S 2]{ERT94}. 

There is a natural $\mu_n$-action on $T^*_R$, i.e.,
\begin{align*}
    \mu_n \times T^*_R \rightarrow T^*_R, \quad (\zeta,v) \mapsto \zeta v
\end{align*}
where $\mu_n$ is the cyclic group of order $n$ and $\zeta \in \mu_n$ is an $n$-th root of unity. Since this morphism $T^*_R \rightarrow V_n(\Omega_R^1)$ is invariant under the $\mu_n$-action, we obtain an induced morphism $T^*_R /\!/\mu_n \rightarrow V_n(\Omega_R^1)$, which is an isomorphism. 
\begin{center}
\begin{tikzcd}
\mu_n \times T^*_R \arrow[r] \arrow[d] & T^*_R \arrow[d] \arrow[rdd, bend left] & \\
T^*_R \arrow[r] \arrow[rrd, bend right] & T^*_R /\!/\mu_n \arrow[rd, dotted] & \\
& & V_n(\Omega_R^1)
\end{tikzcd}
\end{center}
Indeed, it is easy to check that
\begin{align*}
    T^*_R /\!/\mu_n = \Spec \, R[\underline{t}]^{\mu_n} = \Spec \, R[\underline{t}^\alpha, |\alpha| = n],
\end{align*}
and the isomorphism 
\begin{align*}
    T^*_R /\!/\mu_n \rightarrow V_n(\Omega_R^1)
\end{align*}
is induced by
\begin{align*}
    R[t_\alpha, |\alpha| = n] / \langle t_\alpha t_\beta - t_\gamma t_\delta, \alpha + \beta = \gamma + \delta \rangle \rightarrow R[\underline{t}^\alpha, |\alpha| = n], \quad t_\alpha \mapsto \underline{t}^\alpha.
\end{align*}
\end{rem}

\begin{rem}\label{rem_UFD}
Since the regular local ring $R = \mathcal{O}_{X,x }$ is a unique factorization domain (UFD), the condition \eqref{eq_cond} implies that there exist $g,u_1,\dots,u_d \in R$ such that
\begin{align*}
    a_\alpha = g \underline{u}^\alpha.
\end{align*}
Since we always assume that $a$ is nonzero, we have $g \neq 0$. Note that the decomposition of $a_\alpha$ is not unique. For example, we can choose $c \in R^{\times}$ and define
\begin{align*}
    g' = c^{-n} g, \, u'_i = c u_i,
\end{align*}
and then $a_\alpha = g' \underline{u'}^\alpha$.
\end{rem}

We define a quotient $R$-algebra $R_a$ of $R[\underline{t}]$ by the equation $t^n-a=0$. More precisely, $R_a = R[\underline{t}]/ I$, where $I = \langle \underline{t}^\alpha - a_\alpha, \, |\alpha|= n \rangle$, and there is a natural morphism $R \rightarrow R_a$.

\begin{prop}\label{prop_dim_d}
Let $a = \sum_{|\alpha| = n} {n \choose \alpha} a_\alpha  \underline{\omega}^{\alpha}$ be as above. The following conditions are equivalent:
\begin{enumerate}
\item[$(1)$] $a$ satisfies the condition \eqref{eq_cond};
\item[$(2)$] $\dim R_a = d$;
\item[$(3)$] $a$ is cyclic spectral data.
\end{enumerate}
\end{prop}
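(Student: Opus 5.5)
We will prove the cycle of implications $(3)\Rightarrow(1)\Rightarrow(2)\Rightarrow(1)\Rightarrow(3)$. The implication $(3)\Rightarrow(1)$ is exactly Lemma \ref{lem_cond_*}, so nothing new is needed there.

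For $(1)\Rightarrow(2)$, we use Remark \ref{rem_UFD}: since $R$ is a UFD, condition \eqref{eq_cond} gives $a_\alpha = g\underline{u}^\alpha$ with $g\neq 0$ (the case $a=0$ gives $R_a = R[\underline t]/\langle \underline t^\alpha\rangle$, whose reduction is $R$, so $\dim R_a=d$ anyway). Then $R_a$ is integral over $R$: each $t_i$ satisfies $t_i^n = a_{n e_i}\in R$, so $R_a$ is a finite $R$-module and $\dim R_a\le d$. For the reverse inequality, it suffices to exhibit a surjection $R_a\to R'$ onto a domain with $\dim R'=d$. Choose an $n$-th root $h$ of $g$ in a finite extension of $K$. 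The map $t_i\mapsto h u_i$ sends $\underline t^\alpha - a_\alpha$ to $h^n\underline u^\alpha - g\underline u^\alpha=0$, so it defines $R_a\to R[h]$. Here $R[h]$ is integral over $R$ and is a domain, hence has dimension $d$. Therefore $\dim R_a=d$.

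For $(2)\Rightarrow(1)$, we argue by contrapositive and expect this to be the main obstacle. Suppose $a_\alpha a_\beta\neq a_\gamma a_\delta$ for some $\alpha+\beta=\gamma+\delta$. In $R_a$ we have $a_\alpha a_\beta = \underline t^{\alpha+\beta} = a_\gamma a_\delta$, so the nonzero element $c:=a_\alpha a_\beta - a_\gamma a_\delta\in R$ maps to $0$ in $R_a$. Since $R_a$ is finite over $R$, the map $\Spec R_a\to\Spec R$ is finite and factors through $\Spec R/(c)$, which has dimension $d-1$. Hence $\dim R_a\le d-1$, a contradiction. The delicate point is making sure finiteness of $R_a$ over $R$ holds without assuming \eqref{eq_cond}; it does, because $t_i^n=a_{ne_i}$ regardless.

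For $(1)\Rightarrow(3)$, we must produce a section $X\to {\rm Chow}_n(T^*_X/X)$ over $\Spec R$ lifting $a$. By Remark \ref{rem_isom}, condition \eqref{eq_cond} says exactly that $a$ factors through $V_n(\Omega^1_R)\cong T^*_R/\!/\mu_n$. There is a natural closed morphism $T^*_R/\!/\mu_n\to {\rm Chow}_n(T^*_R/R)$, given by $v\mapsto[v,\zeta v,\dots,\zeta^{n-1}v]$ and induced by the $\mu_n$-equivariant map $T^*_R\to (T^*_R)^n$. Its composite with ${\rm Chow}_n\to\mathscr A$ sends $v$ to the coefficients of $\prod(t-\zeta^i v)=t^n-v^n$, which is compatible with $V_n(\Omega^1_R)\hookrightarrow {\rm tot}(S^n\Omega^1_R)$. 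Composing $a$ with this map gives the required point of $\mathscr B$ over $R$. Checking that $V_n\to{\rm Chow}_n$ is well defined on invariant rings is a routine computation with symmetric functions.

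Finally, since the definition of cyclic spectral data is global while the proposition is stated locally, we should note that $\iota_X$ is a closed embedding. Lifting to $\mathscr B$ is therefore a local condition, so local lifts glue by uniqueness.
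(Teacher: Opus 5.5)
Your proposal follows essentially the same route as the paper: $(3)\Rightarrow(1)$ via Lemma \ref{lem_cond_*}, $(2)\Rightarrow(1)$ by observing that $a_\alpha a_\beta-a_\gamma a_\delta$ lies in $R\cap I$ and that a nonzero element there forces $\dim R_a\le d-1$ by finiteness over $R$, and $(1)\Rightarrow(3)$ by lifting $a$ through $V_n(\Omega^1_R)\cong T^*_R/\!/\mu_n\to{\rm Chow}_n(T^*_R/R)$, $v\mapsto[v,\zeta v,\dots,\zeta^{n-1}v]$.

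One small correction is needed in $(1)\Rightarrow(2)$. The map $R_a\to R[h]$, $t_i\mapsto hu_i$, is generally not surjective; its image is $R[hu_1,\dots,hu_d]$. You should therefore either pass to this image, which is a domain containing $R$ and integral over $R$, or argue as the paper does with $R[s]/(s^n-g)$: the map is the identity on $R$, so $R\hookrightarrow R_a$, and integrality then gives $\dim R_a=d$.
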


\begin{proof}
We first prove that conditions $(1)$ and $(2)$ are equivalent. Assume first that $a$ satisfies the condition \eqref{eq_cond} and suppose that $a_\alpha = g \underline{u}^\alpha$ for some $g,u_1,\dots,u_d \in R$ by Remark \ref{rem_UFD}. We define a morphism
\begin{align*}
    \varphi: R_a = R[\underline{t}]/I \rightarrow R[s]/(s^n - g), \quad t_i \mapsto u_i s, 1 \leq i \leq d.
\end{align*}
Note that $\varphi|_{R} = \boldsymbol{1}$, and we obtain a natural injection
\begin{align*}
    R \hookrightarrow R[\underline{t}]/I.
\end{align*}
Since $I$ is generated by $\underline{t}^\alpha - a_{\alpha}$ for $|\alpha|=n$, $R[\underline{t}]/I$ is integral over $R$. Therefore, 
\begin{align*}
    \dim R[\underline{t}]/I = \dim R = d.
\end{align*}
Conversely, we assume that $\dim R_a = d$. There is a natural morphism $R \rightarrow R[\underline{t}]/I$. Clearly, $R[\underline{t}]/I$ is integral over $R$, and then, 
\begin{align*}
    \dim R[\underline{t}]/I = \dim R / (R \cap I).
\end{align*}
Since $\dim R[\underline{t}]/I = d$, we have $R \cap I=(0)$. Now we will show that 
\begin{align*}
    a_\alpha a_\beta - a_\gamma a_\delta \in R \cap I = (0), 
\end{align*}
when $\alpha+\beta = \gamma+\delta$, and thus, $a_\alpha a_\beta = a_\gamma a_\delta$. The equality $\underline{t}^\alpha \underline{t}^\beta = \underline{t}^\gamma \underline{t}^\delta$ in $R[\underline{t}]$ and the relations $\underline{t}^\alpha = a_\alpha \, {\rm mod} \, I$ in $R[\underline{t}]/I$ for $|\alpha|=n$ imply that $a_\alpha a_\beta - a_\gamma a_\delta \in I$. Note that the element $a_\alpha a_\beta - a_\gamma a_\delta$ also lies in $R$. Therefore, $a_\alpha a_\beta - a_\gamma a_\delta \in R \cap I$.

Now we consider the equivalence of conditions $(1)$ and $(3)$. Lemma \ref{lem_cond_*} shows that $(3)$ implies $(1)$, and we only have to prove the other direction. There is a natural morphism
\begin{align*}
    T^*_R \rightarrow {\rm Chow}_n(T^*_R/R), \quad v \mapsto [v,\zeta v, \dots, \zeta^{n-1} v],
\end{align*}
where $\zeta$ is a primitive $n$-th root of unity. Clearly, this morphism factors through $T^*_R /\!/\mu_n$. By Remark \ref{rem_isom}, the isomorphism  $T^*_R /\!/\mu_n \rightarrow V_n(\Omega_R^1)$ makes the following diagram commute.
\begin{center}
\begin{tikzcd}
T^*_R /\!/\mu_n \arrow[r] \arrow[d, "\cong"] & {\rm Chow}_n(T^*_R/R) \arrow[d]\\
V_n(\Omega_R^1) \arrow[r] & {\rm tot}(S^n \Omega_R^1)
\end{tikzcd}
\end{center}
Now assume that $a$ satisfies the condition \eqref{eq_cond}, and then, $a$ factors through $V_n(\Omega_R^1)$.
\begin{center}
\begin{tikzcd}
& T^*_R /\!/\mu_n \arrow[r] \arrow[d, "\cong"] & {\rm Chow}_n(T^*_R/R) \arrow[d]\\
\Spec \, R \arrow[r] \arrow[rr, bend right =15, "a" description] & V_n(\Omega_R^1) \arrow[r] & {\rm tot}(S^n \Omega_R^1)
\end{tikzcd}
\end{center}
Therefore, $a$ is spectral data.
\end{proof}

\begin{rem}
If $a$ is nonzero cyclic spectral data, it is clear that $\dim R_a =d$ by the construction given by Chen and Ng\^o. Proposition \ref{prop_dim_d} provides another proof from the viewpoint of commutative algebras.
\end{rem}

\begin{exmp}\label{exmp_dim_2}
We give an example illustrating the proof given in Proposition \ref{prop_dim_d}. For simplicity, we consider the case of affine spaces. Although $R$ is assumed to be a regular local ring in the above, the properties needed in the arguments are smoothness and the UFD property, of which the affine space automatically satisfies.

Let $d=n=2$ and let $R=k[x_1,x_2]$. We have
\begin{align*}
    R_a = k[x_1,x_2,t_1,t_2]/I, 
\end{align*}
where the ideal $I$ is generated by elements
\begin{align*}
    t_1^2 - a_{(2,0)}, \, t_1 t_2 - a_{(1,1)}, \, t_2^2 - a_{(0,2)},
\end{align*}
where $a_{(2,0)}, a_{(1,1)}, a_{(0,2)} \in k[x_1,x_2]$, and the condition \eqref{eq_cond} includes a single relation 
\begin{align*}
    a^2_{(1,1)} = a_{(2,0)} a_{(0,2)}.
\end{align*}

Suppose that $a$ satisfies the condition \eqref{eq_cond}, and then 
\begin{align*}
    a_{(2,0)} = g u_1^2, \, a_{(1,1)} = g u_1 u_2, \, a_{(0,2)} = g u_2^2
\end{align*}
for some $g,u_1, u_2 \in k[x_1,x_2]$ by Remark \ref{rem_UFD}. The morphism
\begin{align*}
    \varphi: k[x_1,x_2,t_1,t_2]/I \rightarrow k[x_1,x_2,s]/(s^2 - g), \quad t_1 \mapsto u_1 s, \, t_2 \mapsto u_2 s
\end{align*}
is well-defined and induces a natural inclusion
\begin{align*}
    k[x_1,x_2] \hookrightarrow k[x_1,x_2,t_1,t_2]/I \rightarrow k[x_1,x_2,s]/(s^2 - g).
\end{align*}
Since $k[x_1,x_2,t_1,t_2]/I$ is integral over $k[x_1,x_2]$ and $k[x_1,x_2] \hookrightarrow k[x_1,x_2,t_1,t_2]/I$ is injective, we have $\dim R_a =2$.

For the other direction, we suppose that $\dim R_a = 2$. Motivated by the identity $(t_1 t_2)^2 - (t_1^2) (t_2^2) = 0$, we obtain $a^2_{(1,1)} - a_{(2,0)} a_{(0,2)} \in I$. Indeed, $I \cap k[x_1,x_2] = \langle a^2_{(1,1)} - a_{(2,0)} a_{(0,2)} \rangle$. Therefore, $k[x_1,x_2,t_1,t_2]/I$ is finitely generated over $k[x_1,x_2] / (I \cap k[x_1,x_2])$. As an integral extension, we have 
\begin{align*}
    \dim R_a = \dim k[x_1,x_2] / (I \cap k[x_1,x_2]).
\end{align*}
If $a^2_{(1,1)} - a_{(2,0)} a_{(0,2)} \neq 0$, then $\dim k[x_1,x_2] / (I \cap k[x_1,x_2]) \leq 1$. This contradicts the assumption that $\dim R_a =2$. Therefore, $a^2_{(1,1)} - a_{(2,0)} a_{(0,2)} = 0$.
\end{exmp}

\begin{exmp}\label{exmp_spectral_cover_2}
We continue the discussion in Example \ref{exmp_dim_2} and consider the case $d=n=2$ and $R=k[x_1,x_2]$. The spectral data 
\begin{align*}
    a = a_{(2,0)} \omega_1^2 + 2 a_{(1,1)}\omega_1 \omega_2 + a_{(0,2)} \omega_2^2,
\end{align*}
which satisfies $a^2_{(1,1)} = a_{(2,0)} a_{(0,2)}$, gives a spectral cover $R_a = k[x_1,x_2,t_1,t_2]/ I$, where $I$ is generated by 
\begin{align*}
    t_1^2 - a_{(2,0)}, \, t_1 t_2 - a_{(1,1)}, \, t_2^2 - a_{(0,2)}.
\end{align*}
Note that $\dim R_a =2$ by Proposition \ref{prop_dim_d}.
\begin{enumerate}
\item Define
\begin{align*}
    a_{(2,0)} =x_1^2, \, a_{(1,1)} = x_1 x_2, \, a_{(0,2)} = x_2^2. 
\end{align*}
In this case, $I$ is generated by 
\begin{align*}
    t_1^2 - x_1^2, \, t_1 t_2 - x_1x_2, \, t_2^2 - x_2^2.
\end{align*}
Consider the nonzero element $(x_1 t_2 - x_2 t_1) + I \in k[\underline{x},\underline{t}]/ I$. We have
\begin{align*}
    x_i (x_1 t_2 - x_2 t_1) = 0 \, {\rm mod} \, I , \, t_i (x_1 t_2 - x_2 t_1) = 0 \, {\rm mod} \, I
\end{align*}
for $i=1,2$. Therefore, the maximal ideal $\mathfrak{n} = \langle x_1,x_2,t_1,t_2 \rangle \subseteq k[x_1,x_2,t_1,t_2]/ I$, and then any element in this maximal ideal is a zero divisor. The depth of the local ring $(k[x_1,x_2,t_1,t_2]/ I)_{\mathfrak{n}}$ is $0$, while its dimension is $2$. Therefore, the $R$-algebra $R_a$ is not Cohen-Macaulay.

\item Define
\begin{align*}
    a_{(2,0)} = a_{(1,1)} = a_{(0,2)} = 1.
\end{align*}
The ideal $I$ is generated by 
\begin{align*}
    t_1^2 - 1, \, t_1 t_2 - 1, \, t_2^2 - 1.
\end{align*}
Then,
\begin{align*}
    k[\underline{x},\underline{t}]/ I \cong k[\underline{x}] \times k[\underline{x}],
\end{align*}
which is Cohen-Macaulay.
\end{enumerate}
\end{exmp}

In Proposition \ref{prop_dim_d}, we define a morphism 
\begin{align*}
\varphi: R[\underline{t}]/I \rightarrow R[s]/(s^n - g), \quad t_i \mapsto u_i s, \, 1 \leq i \leq d.
\end{align*}
Indeed, the kernel of this morphism is exactly the nilradical $\sqrt{(0)}$ of $R[\underline{t}]/I$.

\begin{lem}\label{lem_nilrad}
The kernel ${\rm ker} \, \varphi$ is the nilradical $\sqrt{(0)}$ of $R[\underline{t}]/I$.
\end{lem}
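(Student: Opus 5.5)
The plan is to prove the two inclusions separately. I will show that $R[s]/(s^n-g)$ is reduced, which gives $\sqrt{(0)} \subseteq \ker \varphi$. I will then show that every homomorphism from $R_a=R[\underline{t}]/I$ to an algebraically closed field factors through $\varphi$, which gives $\ker\varphi \subseteq \bigcap_{P} P = \sqrt{(0)}$. Throughout, $a$ is nonzero and satisfies \eqref{eq_cond}, and $a_\alpha = g\underline{u}^\alpha$ with $g\neq 0$ as in Remark \ref{rem_UFD}.

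\emph{Step 1: $\sqrt{(0)} \subseteq \ker\varphi$.} Let $K=\mathrm{Frac}\,R$. The ring $R[s]/(s^n-g)$ is a free $R$-module with basis $1,s,\dots,s^{n-1}$, so it injects into $K[s]/(s^n-g)$. Since $\mathrm{char}\,k=0$ and $g\neq 0$, the polynomial $s^n-g$ is separable over $K$, because its derivative $ns^{n-1}$ is coprime to it. Hence $K[s]/(s^n-g)$ is a finite product of fields, and is in particular reduced. Therefore $R[s]/(s^n-g)$ is reduced, and any nilpotent element of $R_a$ is mapped by $\varphi$ to a nilpotent element, that is, to $0$.

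\emph{Step 2: $\ker\varphi \subseteq P$ for every prime $P$ of $R_a$.} Fix a prime $P$, let $L$ be an algebraic closure of $\kappa(P)$, and let $\psi: R_a\to L$ be the induced map. Write $\tau_i=\psi(t_i)$ and use bars for the images of elements of $R$ in $L$. Since $\underline{t}^\alpha \equiv a_\alpha \bmod I$, these values satisfy $\underline{\tau}^\alpha=\bar g\,\underline{\bar u}^\alpha$ for all $|\alpha|=n$. I aim to find $\sigma\in L$ with $\tau_i=\bar u_i\sigma$ for all $i$ and $\sigma^n=\bar g$. Given such a $\sigma$, the map $\psi$ factors as $R_a\xrightarrow{\varphi} R[s]/(s^n-g)\to L$ with $s\mapsto\sigma$, so $\ker\varphi\subseteq\ker\psi=P$. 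There are three cases.

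\begin{enumerate}
\item[(a)] All $\bar u_i=0$. Then $\underline{\tau}^\alpha=0$ for every $|\alpha|=n$; taking $\alpha=n e_i$ gives $\tau_i^n=0$, so $\tau_i=0$. Choose $\sigma$ to be any $n$-th root of $\bar g$ in $L$, which exists because $L$ is algebraically closed.
\item[(b)] Some $\bar u_i\neq0$ and $\bar g=0$. Then again all $\tau_j^n=0$, so $\tau_j=0$, and $\sigma=0$ works.
\item[(c)] Some $\bar u_i\neq 0$ and $\bar g\neq 0$. Then $\tau_i^n=\bar g\bar u_i^n\neq0$, and we set $\sigma=\tau_i/\bar u_i$, so that $\sigma^n=\bar g$. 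From the relation $\tau_i^{n-1}\tau_j=\bar g\bar u_i^{n-1}\bar u_j$ and $\tau_i\neq 0$ we get $\tau_j=\bar g\bar u_i^{n-1}\bar u_j/\tau_i^{n-1}$. Since $\tau_i^{n-1}=\tau_i^n/\tau_i=\bar g\bar u_i^n/\tau_i$, this simplifies to $\tau_j=\bar u_j\tau_i/\bar u_i=\bar u_j\sigma$.
\end{enumerate}

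Combining the two steps gives $\ker\varphi=\bigcap_P P=\sqrt{(0)}$. I expect the main obstacle to be the case analysis in Step 2: one must make sure the degenerate primes, those lying over the loci where $g$ or all the $u_i$ vanish (such as the embedded point in Example \ref{exmp_spectral_cover_2}(1)), still admit a compatible $\sigma$. Algebraic closedness of $L$ is used in case (a), to extract an $n$-th root of $\bar g$. As an alternative check, one can verify directly that $(u_it_j-u_jt_i)^n\equiv 0 \bmod I$ by the binomial expansion together with $\sum_k(-1)^k\binom nk=0$. This exhibits explicit nilpotents in $\ker\varphi$, but the prime-by-prime argument above is the one I would use.
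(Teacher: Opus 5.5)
Your proof is correct. Step 1 matches the paper's first inclusion, and you also justify why $R[s]/(s^n-g)$ is reduced, which the paper only asserts. Your second inclusion, however, takes a genuinely different route.

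The paper works element by element inside $R_a$:
\begin{enumerate}
\item[(i)] It represents $f+I\in\ker\varphi$ by a polynomial of degree at most $n-1$ in $\underline{t}$ and splits it into homogeneous components $f_i$.
\item[(ii)] Since $\varphi(f_i+I)=s^if_i(\underline{u})$ and $1,s,\dots,s^{n-1}$ is an $R$-basis, it gets $f_i(\underline{u})=0$.
\item[(iii)] It then notes that $f_i^n$ is homogeneous of degree $in$, hence congruent modulo $I$ to $g^if_i(\underline{u})^n=0$, so each component is nilpotent.
\end{enumerate}

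You instead show that every geometric point of $\Spec R_a$ factors through $\varphi$, so $\ker\varphi$ lies in every prime. Your case analysis is complete, covering the degenerate points over $\{g=0\}$ and over the common zero locus of the $u_i$. The computation in case (c) is also right.

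The paper's argument is more quantitative and needs no field extensions: it shows each homogeneous component of an element of $\ker\varphi$ has vanishing $n$-th power. Your argument avoids the degree-reduction and grading bookkeeping, which the paper uses only implicitly. It also makes the geometric content of the lemma transparent: $\Spec S'\to\Spec S$ is surjective. That surjectivity is exactly the content of Lemma~\ref{lem_X'a}(3).
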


\begin{proof}
Since $R[s]/(s^n - g)$ is reduced, we have $\varphi(f(\underline{t})+I) = 0$ for any nilpotent element $f(\underline{t}) +I \in R[\underline{t}]/I$. Now, given any element $f(\underline{t}) + I \in {\rm ker}\, \varphi$, we write
\begin{align*}
    f(\underline{t})= f_0(\underline{t}) + \dots + f_{n-1}(\underline{t}), 
\end{align*}
where $f_i(\underline{t})$ is a homogeneous polynomial of degree $i$ with respect to the variables $\underline{t}$. Since $\varphi(f(\underline{t})+I) = 0$, we have $\varphi(f_i(\underline{t}) +  I) = 0$ for each $i$. Then,
\begin{align*}
    \varphi(f_i(\underline{t})+I) = s^i f_i(\underline{u}) + (s^n -g),
\end{align*}
which implies that $f_i(\underline{u}) = 0$ for each $i$. Since $I$ is generated by $\underline{t}^\alpha - a_\alpha$ for $|\alpha|=n$, where $a_\alpha \in R$, we have $f_i(\underline{t})^n + I = r+I$ for some $r \in R$. Note that 
\begin{align*}
    \varphi(f_i(\underline{t})^n + I) = s^{in} f_i(\underline{u})^n + (s^n -g) = g^i f_i(\underline{u})^n + (s^n -g). 
\end{align*}
Therefore, 
\begin{align*}
    f_i(\underline{t})^n + I = g^i f_i(\underline{u})^n + I = 0 + I
\end{align*}
for each $i$, and $f(\underline{t})+I$ is nilpotent.
\end{proof}

\begin{rem}\label{rem_generic_fiber}
Let $K$ be the fraction field of $R$, and consider $K \otimes_R R[\underline{t}]/ I \cong K[\underline{t}]/I$. We omit the ideal $I$ from the notation for simplicity. Since we suppose that the given cyclic spectral data $a$ is nonzero, there exists an index $i$ such that $u_i \neq 0$. From the relations $\underline{t}^\alpha = g \underline{u}^\alpha$, we have 
\begin{align*}
    t^n_i = g u^n_i, \, t_i^{n-1} t_j = g u_i^{n-1}u_j.
\end{align*}
The first equation implies that $t_i$ is invertible and these two equations give
\begin{align*}
    \frac{u_j}{u_i} = \frac{t_j}{t_i}.
\end{align*}
In view of this, it is easy to see that 
\begin{align*}
    K \otimes_R R[\underline{t}]/ I \cong K \otimes_R R[s]/(s^n -g).
\end{align*}
Moreover, we have $(u_i t_j - u_j t_i)^n = 0 \, {\rm mod} \, I$, and then $u_i t_j - u_j t_i \in \sqrt{(0)}$. 
\end{rem}

Lemma \ref{lem_nilrad} implies that the morphism $\varphi$ factors through $(R[\underline{t}]/I)/ \sqrt{(0)}$, i.e.,
\begin{align*}
    \varphi: R[\underline{t}]/I \twoheadrightarrow (R[\underline{t}]/I)/ \sqrt{(0)} \hookrightarrow R[s]/(s^n - g).
\end{align*}
For convenience, we introduce the following notations
\begin{align*}
S:=R_a = R[\underline{t}]/I, \, S_{\rm red}= (R[\underline{t}]/I)/ \sqrt{(0)}, \, S' = R[s]/(s^n - g).
\end{align*}

\begin{prop}\label{prop_local_surj}
Given cyclic spectral data $a = \sum_{|\alpha| = n} {n \choose \alpha} a_\alpha \underline{\omega}^{\alpha} \in H^0(\Spec \, R, S^n \Omega_R^1)$, $S'$ is a maximal Cohen-Macaulay $S$-module of generic rank one.
\end{prop}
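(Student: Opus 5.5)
We view $S'$ as an $S$-module through $\varphi\colon S \to S'$, $t_i \mapsto u_i s$, and verify three properties in turn: $S'$ is finitely generated over $S$, it is maximal Cohen--Macaulay, and it has generic rank one. (If $a=0$ then $g=0$ and $S'=R[s]/(s^n)$; the same argument goes through, but we assume $a\neq 0$, so $g\neq 0$, as in Remark \ref{rem_UFD}.)

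\textbf{Finiteness.} $S' = R[s]/(s^n-g)$ is a free $R$-module with basis $1,s,\dots,s^{n-1}$. Since $\varphi|_R=\boldsymbol{1}$, the $R$-module structure on $S'$ factors through $S$, so $S'$ is finitely generated over $S$, and in fact already over $R\subseteq S$.

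\textbf{Maximal Cohen--Macaulay.} Depth should be measured over local rings, so we localize $S$ at a maximal ideal $\mathfrak{n}$, or equivalently treat $S$ as semilocal. $S$ is finite over the local ring $R$ (Proposition \ref{prop_dim_d}), so every maximal ideal $\mathfrak{n}$ of $S$ lies over $\mathfrak{m}$.
\begin{itemize}
\item Because $S'$ is free over $R$ and $R$ is regular of dimension $d$, a regular system of parameters $x_1,\dots,x_d$ of $R$ is an $S'$-regular sequence.
\item These elements lie in $\mathfrak{n}$, so ${\rm depth}_{S_\mathfrak{n}} S'_\mathfrak{n}\ge d$. The one point needing care is that $S'_\mathfrak{n}$ is nonzero whenever $\mathfrak{n}$ is in its support, which we use in the form: depth of a finite module over a finite extension can be computed over $R$.
\item In general, for $S$ finite over $R$ and $M$ a finite $S$-module, the depth of $M$ with respect to $\mathfrak{m}S$ equals ${\rm depth}_R M$. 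Hence $S'_\mathfrak{n}$ has depth $\ge d$ for each $\mathfrak{n}$ in its support.
\item Conversely, $\dim S'_\mathfrak{n}\le \dim S_\mathfrak{n}\le d$.
\end{itemize}
By Proposition \ref{prop_dim_d}, $\dim S=d$, and every component of $S$ dominates $\Spec R$ ($S$ is integral over $R$ and, via the nilradical description, each minimal prime contracts to $0$). Therefore $\dim S_\mathfrak{n}=d$ for every maximal $\mathfrak{n}$, and depth $=\dim=\dim S_\mathfrak{n}$, so $S'$ is maximal Cohen--Macaulay. I would make explicit that the minimal primes of $S$ correspond, via Lemma \ref{lem_nilrad}, to those of the subring $S_{\rm red}\subseteq S'$, all of which lie over $(0)\subset R$.

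\textbf{Generic rank one.} We must show that $S'\otimes_S Q$ is free of rank one over the total ring of fractions $Q$, or at least that it has length equal to that of $S$ at each generic point. Remark \ref{rem_generic_fiber} gives the key input: $K\otimes_R S\cong K\otimes_R S'$, and this isomorphism is induced by $\varphi$. Indeed, pick $i$ with $u_i\neq0$; then $t_i$ is invertible over $K$ and $s\mapsto t_i/u_i$ is an inverse to $\varphi$. Since every minimal prime of $S$ lies over $(0)$, localizing at the generic points of $S$ factors through $K\otimes_R S$. Hence $S'\otimes_S S_\mathfrak{p}\cong S_\mathfrak{p}$ for each minimal prime $\mathfrak{p}$, which is exactly generic rank one.

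\textbf{Main obstacle.} The hard part is the bookkeeping needed to check that the inverse to $\varphi$ over $K$ is well defined, i.e.\ that the relations $\underline t^\alpha=g\underline u^\alpha$ over $K$ force $t_j=(u_j/u_i)t_i$. This is the nilpotence statement of Remark \ref{rem_generic_fiber} promoted to vanishing over $K$: we need $K\otimes_R S$ to be reduced. I would prove reducedness directly. Over $K$, the algebra $K\otimes_R S$ is generated by $t_i$ with $t_i^n=gu_i^n$ together with $t_j$ satisfying $t_i^{n-1}t_j=gu_i^{n-1}u_j$. Multiplying the second relation by $t_i^{1-n}$ gives $t_j=(u_j/u_i)t_i$ exactly, not merely up to nilpotents. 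Therefore $K\otimes_R S=K[t_i]/(t_i^n-gu_i^n)\cong K[s]/(s^n-g)$ via $s=t_i/u_i$, which is reduced because ${\rm char}\,k=0$ and $g\neq0$.
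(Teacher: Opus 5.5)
Your proposal is correct and is essentially the paper's proof: freeness of $S'$ over $R$ gives the depth lower bound, finiteness of $S$ over $R$ gives the dimension upper bound, and the isomorphism $S_K\cong S'_K$ of Remark \ref{rem_generic_fiber} gives generic rank one. The only differences are that you check depth at the maximal ideals of $S$ (all lying over $\mathfrak{m}$) rather than at every prime $\mathfrak{q}$ via a regular system of parameters of $R_{\mathfrak{q}\cap R}$, and that you make explicit a fact the paper uses tacitly, namely that the minimal primes of $S$ contract to $(0)$.

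One caveat: your parenthetical claim that the argument also covers $a=0$ is wrong. For example, with $g=0$, $d=n=2$ and minimal prime $P=(\underline{t})$, one has $\dim_K S_P=3$ but $\dim_K S'_P=2$. This plays no role in your proof, since you, like the paper, assume $a\neq 0$.
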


\begin{proof}
We first prove that $S'$ is a maximal Cohen-Macaulay $S$-module, and it is enough to prove that for any prime ideal $\mathfrak{q} \subseteq S$, we have
\begin{align*}
    {\rm depth}_{S_\mathfrak{q}} S'_\mathfrak{q} = \dim S_\mathfrak{q}.
\end{align*}
Let $\mathfrak{p}:= \mathfrak{q} \cap R$. Since $R_{\mathfrak{p}}$ is a regular local ring, we choose a regular system of parameters $x_1,\dots,x_h$ of $R_{\mathfrak{p}}$, where $h = \dim R_{\mathfrak{p}}$. Since $S' = R[s]/(s^n-g)$ is a free $R$-module of rank $n$, this sequence is also an $R_\mathfrak{p}$-regular sequence of $S'_\mathfrak{q}$ and
\begin{align*}
    {\rm depth}_{R_\mathfrak{p}} S'_\mathfrak{q} = h.
\end{align*}
Since $R \hookrightarrow S$ is injective, we have
\begin{align*}
    {\rm depth}_{S_\mathfrak{q}} S'_\mathfrak{q} \geq {\rm depth}_{R_\mathfrak{p}} S'_\mathfrak{q} = h.
\end{align*}
For dimensions, since $S_\mathfrak{q}$ is finite over $R_\mathfrak{p}$, we have $\dim S_\mathfrak{q} \leq h$. Since the depth is always bounded above by the dimension
\begin{align*}
h \leq   {\rm depth}_{S_\mathfrak{q}} S'_\mathfrak{q} \leq \dim S_\mathfrak{q} \leq h,
\end{align*}
we have
\begin{align*}
    {\rm depth}_{S_\mathfrak{q}} S'_\mathfrak{q} = h = \dim S_\mathfrak{q},
\end{align*}
and $S'$ is a maximal Cohen-Macaulay $S$-module.

Now we will show that $S'$ is an $S$-module of generic rank one. In Remark \ref{rem_generic_fiber}, we show that 
\begin{align*}
    S_K \cong K[s]/(s^n-g) = S'_K,
\end{align*}
where $K = {\rm Frac}\, R$. Since $g \neq 0$ and $s^n-g$ is separable, $S_K$ is a finite product of fields. Let $\mathfrak{q}$ be a minimal prime of $S$. Then, $S_\mathfrak{q}$ is one of the field factors of $S_K$. Under the isomorphism $S_K \cong S'_K$, we have $S_\mathfrak{q} \cong S'_\mathfrak{q}$. Therefore, $S'$ is of generic rank one as an $S$-module.
\end{proof}

\begin{rem}\label{rem_cons_Hit}
Given spectral data $a = \sum_{|\alpha| = n} {n \choose \alpha} a_\alpha \underline{\omega}^{\alpha}$, there exist $g,u_1,\dots,u_d \in R$ such that $a_\alpha = g \underline{u}^\alpha$ for $|\alpha| = n$. Then, $S' = R[s]/(s^n-g)$ and the morphism 
\begin{align*}
    R[\underline{t}]/I \rightarrow R[s]/(s^n-g)
\end{align*}
defined by $t_i \mapsto u_i s$ induces an $R[\underline{t}]$-module structure on $R[s]/(s^n-g)$. Regarding $R[s]/(s^n-g)$ as a free $R$-module with basis $\{1,s,\dots,s^{n-1}\}$, the Higgs field $\theta$ on
\begin{align*}
    R[s]/(s^n-g) \cong R \cdot 1 \oplus \dots \oplus R \cdot s^{n-1}
\end{align*}
induced by the $R[\underline{t}]$-module structure on $R[s]/(s^n-g)$ is
\begin{align*}
    \theta & = 
\begin{pmatrix}
     & u_1 & & \\
    &  & \ddots & \\
    & & & u_1 \\
    u_1 g & & & \\
\end{pmatrix}\omega_1
+ \cdots + 
\begin{pmatrix}
     & u_d & & \\
    &  & \ddots & \\
    & & & u_d \\
    u_d g & & & \\
\end{pmatrix}\omega_d \\
& =\begin{pmatrix}
     & 1 & & \\
    &  & \ddots & \\
    & & & 1 \\
    g & & & \\
\end{pmatrix} (\sum_{i=1}^d u_i \omega_i).
\end{align*}
\end{rem}

\begin{rem}
The ring $S'$ may not be normal, for instance, $S' = k[x,s]/(s^4-x^2)$. Moreover, $S'$ is normal if and only if $g$ is square-free. 
\end{rem}

\begin{cor}\label{cor_nor_MCM}
The normalization $\widetilde{S}$ of $S_{\rm red}$ is a maximal Cohen-Macaulay $S$-module of generic rank one.
\end{cor}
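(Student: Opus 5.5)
The plan is to compare $\widetilde{S}$ with $S'$, or more directly to run the argument of Proposition \ref{prop_local_surj} with $\widetilde{S}$ in place of $S'$. In that argument, the only input on the module side was that $S'$ is finite and free, or at least Cohen-Macaulay, over the regular ring $R$.

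\textbf{Step 1: $\widetilde{S}$ is a finite $S$-module.} First, $S_{\rm red}$ is reduced and is a finitely generated $k$-algebra, since $R$ is a localization of one. Hence $S_{\rm red}$ is excellent, and its normalization $\widetilde{S}$ is finite over $S_{\rm red}$. Because $S_{\rm red}$ is a quotient of $S$, it follows that $\widetilde{S}$ is a finite $S$-module, and therefore also finite over $R$.

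\textbf{Step 2: $\widetilde{S}$ is finite free over $R$.} We have $\dim R = d$ and $\dim \widetilde{S} = d$. Since $\widetilde{S}$ is a finite product of normal domains, Serre's criterion gives that it satisfies $(S_2)$.
\begin{itemize}
\item When $d \le 2$, $(S_2)$ already makes $\widetilde{S}$ Cohen-Macaulay, and Auslander--Buchsbaum over the regular ring $R$ then shows it is free.
\item In general, normal does not imply Cohen-Macaulay, so this is the main obstacle.
\end{itemize}
For the general case, the first thing to try is the explicit structure. Write $g = h^m g_0$ with $g_0$ square-free part data. Then $S' = R[s]/(s^n - g)$, and its normalization decomposes according to the factorization of $s^n - g$ over $K$. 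Each factor looks like $R[s_1]/(s_1^{n'} - g')$ after extracting roots. Concretely, $K[s]/(s^n - g)$ splits into fields $K(\sqrt[e]{\zeta g})$, and the integral closure of $R$ in a Kummer extension $K(g^{1/e})$ is $\bigoplus_{j} R\cdot s^j / h_j$, where $h_j$ is the appropriate divisor of $g^{j}$ determined by $\lfloor j\, v_P(g)/e \rfloor$ for each prime $P$. Because $R$ is a UFD, this is a free $R$-module. Thus Kummer theory over a UFD in characteristic zero gives freeness. To carry this out, I will identify $S_{\rm red} \cong \varphi(S) \subseteq S'$ via Lemma \ref{lem_nilrad}. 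I will then note that $S_{\rm red} \subseteq S' \subseteq \widetilde{S}$ share the total ring of fractions $S_K \cong S'_K$ (Remark \ref{rem_generic_fiber}), so $\widetilde{S}$ equals the normalization of $S'$.

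\textbf{Step 3: maximal Cohen-Macaulay and generic rank one.} Given freeness over $R$, the depth argument of Proposition \ref{prop_local_surj} applies verbatim. For each prime $\mathfrak{q} \subseteq S$ with $\mathfrak{p} = \mathfrak{q} \cap R$, a regular system of parameters of $R_\mathfrak{p}$ is $\widetilde{S}_\mathfrak{q}$-regular. This gives the chain
$$h \le {\rm depth}\, \widetilde{S}_\mathfrak{q} \le \dim S_\mathfrak{q} \le h.$$
For generic rank one, $\widetilde{S}_K = S_K$ is the same product of fields as in Proposition \ref{prop_local_surj}. Hence localization at each minimal prime $\mathfrak{q}$ gives $\widetilde{S}_\mathfrak{q} \cong S_\mathfrak{q}$, a field.
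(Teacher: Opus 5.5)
Your proposal is correct and is essentially the paper's argument: identify $\widetilde{S}$ with the normalization of $S'$, show it is free over $R$ through the explicit UFD/Kummer description, and then rerun the depth and generic-fibre argument of Proposition~\ref{prop_local_surj}. The paper writes the basis $z_i=s^i/q_i$, with $q_i=\prod_j p_j^{\lfloor ie_j/n\rfloor}$, directly on $S'$ as $\mu_n$-eigenspaces, so it never splits $K[s]/(s^n-g)$ into fields. This avoids your slip: the factors are $K(\sqrt[e]{\zeta c})$ with $c^{n/e}=g$, with floors in $v_P(c)/e=v_P(g)/n$, not $K(\sqrt[e]{\zeta g})$. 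It also makes your excellence and $d\le 2$ detours unnecessary.
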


\begin{proof}
Note that $S_{\rm red} \cong R[\underline{u}s]/(s^n -g) \subseteq S' = R[s]/(s^n-g)$. The normalization $\widetilde{S}$ of $S_{\rm red}$ is also the normalization of $S'$. Since $R$ is a UFD, we write $g = u \prod_j p_j^{e_j}$, where $u$ is a unit and $p_j$ are irreducible elements. Define \begin{align*}
    q_i:=\prod_j p_j^{\lfloor \frac{ie_j}{n} \rfloor}
\end{align*}
for $0 \leq i \leq n-1$. The elements $z_i:=\frac{s^i}{q_i}$ are integral over $S_{\rm red}$. In fact, one has $\widetilde{S} = \bigoplus_{i=0}^{n-1} R z_i$. Therefore, the normalization $\widetilde{S}$ is a free $R$-module of rank $n$. The result then follows by an argument similar to the proof of Proposition \ref{prop_local_surj}.
\end{proof}

\begin{rem}
There is a natural $\mu_n$-action on $S'$, which is defined as
\begin{align*}
    s \rightarrow \zeta s,
\end{align*}
where $\zeta \in \mu_n$. Then, the structure of the normalization $\widetilde{S}$ can be understood as the eigenspace decomposition under the induced $\mu_n$-action on $\widetilde{S}$, where the direct summand $Rz_i$ is exactly the eigenspace with eigenvalue $\zeta^i$, and $\zeta$ is a primitive $n$-th root of unity.
\end{rem}

Now we will prove the existence of a maximal Cohen-Macaulay sheaf of generic rank one on the spectral cover. We first fix some notation. Let $\pi_a: X_a \rightarrow X$ be the spectral cover determined by spectral data $a$, let $(X_a)_{\rm red}$ be its reduced scheme, and let $\widetilde{X}_a$ be the normalization of $(X_a)_{\rm red}$.

\begin{thm}\label{thm_nor_MCM}
Let $a \in H^0(X, S^n \Omega_X^1)$ be nonzero cyclic spectral data. There exists a maximal Cohen-Macaulay sheaf of generic rank one on the spectral cover $X_a$. In particular, the Hitchin fiber $h^{-1}_X(a)$ is non-empty.
\end{thm}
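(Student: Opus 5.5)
Globalize Corollary \ref{cor_nor_MCM}: take the normalization $\nu:\widetilde{X}_a \to (X_a)_{\rm red} \hookrightarrow X_a$ and let $\mathcal{F} := \nu_*\mathcal{O}_{\widetilde{X}_a}$. This is a coherent sheaf on $X_a$, because normalization of a variety over a field of characteristic zero is finite. It is intrinsic: it does not depend on the local choices $g,u_i$ of Remark \ref{rem_UFD}, which only exist locally. Being maximal Cohen-Macaulay of generic rank one is a local property, checked on the local rings of $X_a$. So it suffices to verify it after base change along $\Spec \mathcal{O}_{X,x} \to X$ for each point $x \in X$.

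First I check that the constructions commute with this localization. The spectral cover satisfies $X_a \times_X \Spec R = \Spec R_a = \Spec S$ with $R=\mathcal{O}_{X,x}$. Taking the reduced subscheme commutes with flat base change by open immersions and localizations, as does normalization, since integral closure commutes with localization. Hence $\mathcal{F}\otimes_{\mathcal{O}_X} R$ is exactly the $S$-module $\widetilde{S}$ of Corollary \ref{cor_nor_MCM}. Corollary \ref{cor_nor_MCM} then says it is maximal Cohen-Macaulay of generic rank one over $S$. Every prime $\mathfrak{q}$ of $X_a$ lies over some point $x$, and $(X_a)_{\mathfrak{q}}$ is a localization of $S$ at that $x$. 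Therefore $\mathcal{F}_{\mathfrak{q}}$ has depth equal to $\dim \mathcal{O}_{X_a,\mathfrak{q}}$. Likewise, at each generic point of $X_a$ the stalk is of rank one over the (field) local ring. Here I use that $X$ is connected, so $a\neq 0$ on $\Spec R$ for every $x$, since $H^0$ of $S^n\Omega^1_X$ on an integral $X$ injects into the generic stalk.

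Second, I pass from the sheaf to the Hitchin fiber. The sheaf $\pi_{a*}\mathcal{F}$ is locally free of rank $n$ on $X$, because $\widetilde{S}$ is a free $R$-module of rank $n$ as shown in Corollary \ref{cor_nor_MCM}. The $\mathcal{O}_{T^*_X}$-module structure, i.e.\ the action of $\underline{t}$, gives a Higgs field $\theta:E\to E\otimes\Omega^1_X$ with $\theta\wedge\theta=0$. Its characteristic polynomial is computed locally as in Remark \ref{rem_cons_Hit}: the action on $\widetilde{S}$ is the restriction of the action on $S'$ over $K$. Generically this gives $t^n - g(\sum u_i\omega_i)^n = t^n - a$, and since characteristic polynomial coefficients are sections of $S^i\Omega^1_X$ on an integral $X$, generic agreement suffices. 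So $(E,\theta)\in h_X^{-1}(a)$. Alternatively, one can invoke the Chen--Ngô description of ${\rm sd}_X^{-1}(b)$ directly once $a\in\mathscr{B}_X^{\heartsuit}$, which holds since $s^n-g$ is separable generically.

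The main obstacle is the first step: $g$ and $u_i$ are only defined locally, so the MCM sheaf must be built intrinsically. Normalization of the reduced cover does this, and the work is checking compatibility with localization so that Corollary \ref{cor_nor_MCM} applies verbatim.
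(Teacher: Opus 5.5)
Your proposal is correct and follows the same route as the paper: you push forward the structure sheaf of the normalization of $(X_a)_{\rm red}$ and verify the maximal Cohen--Macaulay and generic-rank-one conditions stalkwise by reducing to Corollary~\ref{cor_nor_MCM} over $R=\mathcal{O}_{X,x}$. The points you make explicit are all sound and fill in details the paper leaves implicit: compatibility of reduction and normalization with localization, nonvanishing of $a$ in every local ring because $X$ is integral, and the direct construction of the Higgs bundle $\pi_{a*}\mathcal{F}$ with characteristic polynomial $t^n-a$ for the ``in particular'' clause, which the paper leaves to the Chen--Ng\^o correspondence.
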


\begin{proof}
Let $\nu: \widetilde{X}_a \rightarrow (X_a)_{\rm red} \rightarrow X_a$ be the composition. We will prove that $\nu_* \mathcal{O}_{\widetilde{X}_a}$ is a maximal Cohen-Macaulay $\mathcal{O}_{X_a}$-module of generic rank one. As proved in Proposition \ref{prop_local_surj} and Corollary \ref{cor_nor_MCM}, we have $(\nu_* \mathcal{O}_{\widetilde{X}_a})_\eta \cong \mathcal{O}_{X_a,\eta}$ at every minimal point $\eta$ of $X_a$. Therefore, $\nu_* \mathcal{O}_{\widetilde{X}_a}$ is of generic rank one. It remains to show that $\nu_* \mathcal{O}_{\widetilde{X}_a}$ is a maximal Cohen-Macaulay $\mathcal{O}_{X_a}$-module. Since Cohen-Macaulayness is a local property, it is enough to prove that $(\nu_*\mathcal{O}_{\widetilde{X}_a})_{y}$ is a maximal Cohen-Macaulay $\mathcal{O}_{X_a,y}$-module for any point $y \in X_a$. We choose an affine open subset $U \hookrightarrow X$, on which $\Omega_X^1$ is free. Let $V:=\pi_a^{-1}(U) \subseteq X_a$, where $\pi_a: X_a \rightarrow X$ is the spectral cover morphism. Then, $\mathcal{O}_V \cong \mathcal{O}_U[t_1,\dots,t_d]/ I'$, where $I'$ is the ideal determined by the spectral data $a$ on $U$ as in Proposition \ref{prop_dim_d}. We take a point $y \in V \subseteq X_a$ and denote by $\mathfrak{n}$ the corresponding prime ideal in $\mathcal{O}_V$. Let $x:=\pi_a(y) \in U \subseteq X$ and denote by $\mathfrak{m} \subseteq \mathcal{O}_U$ the corresponding prime ideal. We follow the same notation as above, especially those in Proposition \ref{prop_local_surj}. We define
\begin{align*}
    R:=\mathcal{O}_{X,x} = \mathcal{O}_{U,x}, \, I:=I'_{\mathfrak{m}}, \, S:=R[\underline{t}]/I,
\end{align*}
and let $\widetilde{S}$ be the normalization of $S_{\rm red}$. We have 
\begin{align*}
    \mathcal{O}_{X_a,y} =\mathcal{O}_{V,y} \cong (\mathcal{O}_{U}[\underline{t}]/I')_{y} = (R[\underline{t}]/I)_{\mathfrak{n}} = S_{\mathfrak{n}}.
\end{align*}
Moreover, a similar argument gives
\begin{align*}
    (\nu_*\mathcal{O}_{\widetilde{X}_a})_{y} \cong \widetilde{S}_{\mathfrak{n}}.
\end{align*}
By Proposition \ref{prop_local_surj} and Corollary \ref{cor_nor_MCM}, the theorem follows directly.
\end{proof}

\section{Hitchin Section}\label{sect_Hit_sect}
In this section, we will provide a global version of the $R$-algebra $S'=R[s]/(s^n-g)$ constructed in Proposition \ref{prop_dim_d}, and then with the aim of defining a section of the Hitchin morphism based on Remark \ref{rem_cons_Hit}. The global construction is a generalization of He--Liu's work \cite[\S3]{HL24}, and we first recall the definition of symmetric differentials of rank one \cite{BdO11}.
\begin{defn}
A symmetric differential $\omega \in H^0(X, S^m \Omega_X^1)$ is of \emph{rank one} if for every point there is an open subset $U \subseteq X$, a section $\mu \in H^0(U, \Omega_U^1)$ and a function $f \in \mathcal{O}_X(U)$ such that $\omega|_U  = f \mu^m$.
\end{defn}

\begin{prop}\label{prop_sym_diff}
Every cyclic spectral data is a symmetric differential of rank one.
\end{prop}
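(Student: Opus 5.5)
The plan is to reduce the statement to the local algebra of Section~\ref{sect_nonempty}, namely Lemma~\ref{lem_cond_*} and Remark~\ref{rem_UFD}, and then spread the resulting factorization to a Zariski neighbourhood.

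Let $a \in H^0(X,S^n\Omega_X^1)$ be cyclic spectral data and fix a point $x \in X$. If $a = 0$ there is nothing to prove, since we may take $f = 0$ and any $\mu$; so assume $a \neq 0$. Because $X$ is connected and smooth, hence integral, $a$ does not vanish on any nonempty open subset. We first choose an affine open $U_0 \ni x$ on which $\Omega_X^1$ is free with basis $\omega_1,\dots,\omega_d$. On $U_0$ we write
\begin{align*}
a|_{U_0} = \sum_{|\alpha|=n} {n \choose \alpha} a_\alpha \underline{\omega}^\alpha, \qquad a_\alpha \in \mathcal{O}(U_0).
\end{align*}

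The next step is to pass to the local ring $R = \mathcal{O}_{X,x}$. The proof of Lemma~\ref{lem_cond_*} works over the generic point, so it gives $a_\alpha a_\beta = a_\gamma a_\delta$ in $\mathcal{O}(U_0)$, and hence in $R$, whenever $\alpha+\beta=\gamma+\delta$. Since $R$ is a UFD, Remark~\ref{rem_UFD} then provides $g,u_1,\dots,u_d \in R$ with $a_\alpha = g\underline{u}^\alpha$.

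The step that needs some care is the justification of Remark~\ref{rem_UFD}, which is only asserted in the text. The argument I would give is as follows. By the proof of Lemma~\ref{lem_cond_*}, $a_\alpha = \underline{w}^\alpha$ for some $\underline{w} \in \bar K^d$. Pick an index $i$ with $w_i \neq 0$. Then
\begin{align*}
\frac{w_j}{w_i} = \frac{a_{(n-1)e_i+e_j}}{a_{ne_i}} \in K,
\end{align*}
where $e_i$ denotes the $i$-th standard basis vector. Write these ratios over a common denominator in lowest terms: choose $u_1,\dots,u_d \in R$ with no common prime factor and $u_j/u_i = w_j/w_i$ for all $j$. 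This means $\underline{w} = \lambda \underline{u}$ with $\lambda \in \bar K$, and hence $a_\alpha = \lambda^n \underline{u}^\alpha$. Set $g := \lambda^n$; then $g = a_{ne_i}/u_i^n$ lies in $K$. To see that $g$ lies in $R$, fix a prime $p$ and choose $j$ with $p \nmid u_j$. Then $a_{ne_j} = g u_j^n$ lies in $R$ while $u_j^n$ is a $p$-adic unit, so $v_p(g) \geq 0$. Since this holds for every prime $p$, we conclude $g \in R$.

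Finally, we spread the factorization out to a neighbourhood of $x$. Since $g$ and the $u_j$ are elements of $R = \mathcal{O}_{X,x}$, they are all defined on some open $U \subseteq U_0$ containing $x$. The two sides of $a_\alpha = g\underline{u}^\alpha$ agree in the local ring, so after shrinking $U$ further these finitely many identities hold in $\mathcal{O}(U)$. Set $\mu := \sum_i u_i\omega_i \in H^0(U,\Omega_U^1)$ and $f := g$. The multinomial theorem then gives
\begin{align*}
f\mu^n = \sum_{|\alpha|=n} {n \choose \alpha} g\underline{u}^\alpha \underline{\omega}^\alpha = a|_U.
\end{align*}
Since $x$ was arbitrary, $a$ is a symmetric differential of rank one.
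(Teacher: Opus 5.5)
Your proof is correct, but it takes a different route from the paper's.

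\textbf{The paper's route.} The paper argues globally, in three steps:
\begin{enumerate}
\item It produces a generic decomposition $a_K = c\,v^n$ with $v \in T^*_K$ and $c \in K^\times$. This uses the fact that $[a_{\bar K}]$ lies in the image of the Veronese closed immersion $\mathbb{P}(T^*_K) \hookrightarrow \mathbb{P}(S^n\Omega^1_K)$, which is defined over $K$.
\item It saturates $Kv$ inside $\Omega^1_X$ to obtain a line bundle $\mathscr{L}$ with $\alpha\colon \mathscr{L} \to \Omega^1_X$. Saturation then shows that $\tau = a/\alpha^n$ has no poles in codimension one, so $\tau$ extends and $a = \tau\alpha^n$ globally.
\item The rank-one property follows by trivializing $\mathscr{L}$ locally.
\end{enumerate}

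\textbf{Your route.} You work pointwise in the UFD $\mathcal{O}_{X,x}$, and each of your steps is a local counterpart of one of the paper's:
\begin{enumerate}
\item Your observation $w_j/w_i = a_{(n-1)e_i+e_j}/a_{ne_i} \in K$ is the concrete form of the paper's descent of the Veronese point from $\bar K$ to $K$.
\item Your gcd normalization of $(u_1,\dots,u_d)$ plays the role of saturation.
\item Your estimate $v_p(g) = v_p(a_{ne_j}) \geq 0$ for a $j$ with $p \nmid u_j$ mirrors the paper's ``no poles in codimension one'' step.
\end{enumerate}
Incidentally, the relation $a_\alpha a_\beta = a_\gamma a_\delta$ is never actually used in your argument; you go directly from $a_\alpha = \underline{w}^\alpha$.

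\textbf{Comparison.} Your version is more elementary. It matches the local definition of rank one exactly, and it supplies a proof of Remark~\ref{rem_UFD}, which the paper only asserts. What it does not give directly is the global triple $(\mathscr{L},\tau,\alpha)$ with $\alpha(\mathscr{L})$ saturated, as in Corollary~\ref{cor_decomp}. The paper needs that triple for the Higgs bundle $(E_a,\theta_a)$, for $X'_a$, and for the sections. To recover it from your argument, you would either invoke Bogomolov--De Oliveira, or glue your local factorizations. Gluing works because gcd-normalized factorizations are unique up to units of the local ring, after shrinking so that $\mu$ has no divisorial zeros.
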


\begin{proof}
We follow the notations of Lemma \ref{lem_cond_*}. Let $K:=K(X)$ be the function field with algebraic closure $\bar{K}$. Let $a \in H^0(X,S^n \Omega_X^1)$ be cyclic spectral data, and we suppose that $a$ is nonzero. There exists $v_{\bar{K}} \in T^*_{\bar{K}}$ such that $a_{\bar{K}} = v_{\bar{K}}^n$. In Remark \ref{rem_isom}, we defined a closed embedding
\begin{align*}
    T^*_K \hookrightarrow {\rm tot}(S^n \Omega^1_K), \quad v \mapsto v^n. 
\end{align*}
Consider the induced closed immersion on projective spaces
\begin{align*}
\mathbb{P}(T^*_K) \hookrightarrow \mathbb{P}(S^n \Omega^1_K).
\end{align*}
The equation $a_{\bar{K}} = v_{\bar{K}}^n$ implies that $[a_{\bar{K}}]$ lies in the image of the base change of the morphism $\mathbb{P}(T^*_K) \hookrightarrow \mathbb{P}(S^n \Omega^1_K)$ to $\bar{K}$. Since the morphism  $\mathbb{P}(T^*_K) \hookrightarrow \mathbb{P}(S^n \Omega^1_K)$ is a closed immersion defined over $K$, there exists $v \in T^*_K$ such that $[a_K] = [v^n]$. Therefore, there exists $c \in K^\times$ such that $a_K = c v^n$, which is a generic decomposition of spectral data $a_K$.

Now we will show that the generic decomposition $a_K = c v^n$ induces a global decomposition. Regarding $v$ as a section of $\Omega_K^1$, define a subsheaf $\ell_v := Kv \subseteq \Omega_K^1$. Consider its saturation
\begin{align*}
    \mathscr{L}:= \Omega^1_X \cap \ell_v,
\end{align*}
where the intersection is taken inside $\Omega_K^1$, and it is a rank one reflexive sheaf on $X$. Since $X$ is smooth, $\mathscr{L}$ is a line bundle, and we obtain a global section $\alpha: \mathscr{L} \rightarrow \Omega_X^1$ induced by $v$, i.e., $\alpha_K = v$. Now $\tau_K:=a/\alpha^n$ is a rational section of $\mathscr{L}^n$. Since $\alpha(\mathscr{L})$ is saturated, any pole of $\tau_K$ in codimension one will give a pole of $a$. Since $a$ is regular, $\tau_K$ does not have any pole in codimension one. As $\mathscr{L}^n$ is locally free on the smooth variety $X$, $\tau_K$ extends uniquely to a section $\tau$ of $\mathscr{L}^n$ by the normality of $X$. Therefore, we obtain a global decomposition $a = \tau \alpha^n$, which implies that the cyclic spectral data $a$ is a symmetric differential of rank one. 
\end{proof}

The proof of Proposition \ref{prop_sym_diff} also implies the following corollary.

\begin{cor}\label{cor_decomp}
Let $a$ be nonzero cyclic spectral data. There exists a triple $(\mathscr{L},\tau,\alpha)$, where $\mathscr{L}$ is a line bundle on $X$, $\tau \in H^0(X,\mathscr{L}^n)$ and $\alpha \in H^0(X,\Omega_X^1 \otimes \mathscr{L}^{-1})$, such that $a = \tau \alpha^n$ and $\alpha$ does not vanish in codimension one. The isomorphism class of $\mathscr{L}$ is unique, and the pair $(\tau,\alpha)$ is unique up to $(\tau,\alpha) \sim (c^{-n} \tau, c \alpha)$ for some $c \in H^0(X,\mathcal{O}_X^\times)$. Moreover, the image $\alpha(\mathscr{L})$ is saturated in $\Omega_X^1$.
\end{cor}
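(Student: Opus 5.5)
Existence comes directly from the proof of Proposition \ref{prop_sym_diff}. There we built a line bundle $\mathscr{L}:=\Omega_X^1\cap \ell_v$ inside $\Omega^1_K$, an inclusion $\alpha:\mathscr{L}\to\Omega_X^1$ (equivalently $\alpha\in H^0(X,\Omega_X^1\otimes\mathscr{L}^{-1})$), and $\tau\in H^0(X,\mathscr{L}^n)$ with $a=\tau\alpha^n$. By construction $\alpha(\mathscr{L})$ is saturated, so $\Omega_X^1/\alpha(\mathscr{L})$ is torsion-free.

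For non-vanishing in codimension one, I argue locally. At a codimension one point $\mathfrak{p}$ the ring $\mathcal{O}_{X,\mathfrak{p}}$ is a DVR and $\Omega^1$ is free there. If $\alpha$ vanished at $\mathfrak{p}$, then $\alpha=\pi\alpha'$ with $\pi$ a uniformizer and $\alpha'$ a local section of $\Omega^1\otimes\mathscr{L}^{-1}$. The image of a local generator of $\mathscr{L}$ divided by $\pi$ would then be a section of $\Omega^1_X$ lying in $\ell_v$ but not in $\alpha(\mathscr{L})$, which contradicts saturation. So $\alpha$ does not vanish in codimension one.

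For uniqueness, take two triples $(\mathscr{L},\tau,\alpha)$ and $(\mathscr{L}',\tau',\alpha')$ with the stated properties. At the generic point $a_K=\tau\alpha^n=\tau'\alpha'^n$, and since $a\neq0$ both $\tau$ and $\tau'$ are generically nonzero. Passing to $\bar K$ as in Lemma \ref{lem_cond_*}, the point $[a_{\bar K}]$ lies in the image of $\mathbb{P}(T^*)\hookrightarrow\mathbb{P}(S^n\Omega^1)$. This map is injective on points, since $v^n=w^n$ up to scalar forces $v,w$ to be proportional: $k[\underline{t}]$ is a UFD and $v^n$ determines the linear factor $v$ up to scalar. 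Hence the lines $\ell_\alpha=K\alpha(\mathscr{L})$ and $\ell_{\alpha'}$ in $\Omega^1_K$ coincide. Because $\alpha$ does not vanish in codimension one, $\alpha(\mathscr{L})$ agrees with the saturation $\Omega^1_X\cap\ell$ at all codimension one points. Both are reflexive of rank one on the smooth $X$, so $\alpha(\mathscr{L})=\Omega_X^1\cap\ell$, and likewise for $\alpha'$. Thus $\alpha(\mathscr{L})=\alpha'(\mathscr{L}')$ as subsheaves, which gives an isomorphism $\phi:\mathscr{L}\cong\mathscr{L}'$ with $\alpha'\circ\phi=\alpha$. This settles the isomorphism class of $\mathscr{L}$.

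Fixing $\mathscr{L}$, if $\alpha$ and $\alpha'$ are two maps with the same saturated image line bundle, then $\alpha'=c\alpha$ for some automorphism $c$ of $\mathscr{L}$, i.e. $c\in H^0(X,\mathcal{O}_X^\times)$. Then $\tau\alpha^n=\tau' c^n\alpha^n$ holds generically, where $\alpha^n\neq0$ in the torsion-free sheaf $S^n\Omega^1\otimes\mathscr{L}^{-n}$, so $\tau'=c^{-n}\tau$. This is exactly the stated equivalence.

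The main obstacle is the identification $\alpha(\mathscr{L})=\Omega^1_X\cap\ell$ for an arbitrary triple, i.e. showing that non-vanishing in codimension one forces saturation. This needs the reflexive-extension argument. I would prove it by checking equality of the two rank one subsheaves at codimension one points, where over a DVR non-vanishing means $\alpha$ is part of a basis, and then invoking normality of $X$.
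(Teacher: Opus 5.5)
Your proposal is correct and follows the paper's route. Existence is the saturation $\mathscr{L}=\Omega^1_X\cap\ell_v$ from the proof of Proposition \ref{prop_sym_diff}. Uniqueness rests on the generic line $\ell$ being determined by $a$, via injectivity of $\mathbb{P}(T^*_K)\hookrightarrow\mathbb{P}(S^n\Omega^1_K)$, together with your codimension-one/reflexivity argument, which the paper leaves implicit or cites to Bogomolov--De Oliveira and He--Liu. (Minor slip: the UFD you need is $K[\underline{t}]$ or $\bar K[\underline{t}]$, not $k[\underline{t}]$, and since $\alpha_K,\alpha'_K$ are already $K$-rational you need not pass to $\bar K$ at all.)
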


\begin{rem}
We would like to mention that the proof of Proposition \ref{prop_sym_diff} is inspired by Bogomolov--De Oliveira and He--Liu's work \cite{BdO11,HL24}. If we know that $a$ is a symmetric differential of rank one, the existence and uniqueness of the line bundle $\mathscr{L}$ and the decomposition $a = \tau \alpha^n$ are direct results of Bogomolov and De Oliveira \cite[Proposition 2.2]{BdO11}. Moreover, He--Liu also provided a proof when $n=2$ \cite[Proposition 3.6]{HL24}, which can be directly generalized to arbitrary $n$. Furthermore, the statement that $\alpha$ is saturated in $\Omega_X^1$ is also proved in \cite[Lemma 3.8]{HL24}.   
\end{rem}

With the help of Corollary \ref{cor_decomp}, we can construct a Higgs bundle for each nonzero cyclic spectral data $a$. This gives a second proof of the nonemptiness of the Hitchin fiber.

\begin{proof}[Second proof of Theorem \ref{thm_nonempty}]
Let $a$ be nonzero cyclic spectral data. By Corollary \ref{cor_decomp}, there exists a line bundle $\mathscr{L}$ such that $a=\tau \alpha^n$, where $\tau \in H^0(X,\mathscr{L}^n)$ and $\alpha \in H^0(X,\Omega_X^1 \otimes \mathscr{L}^{-1})$. Now we construct a Higgs bundle $(E_a,\theta_a)$ as follows. We define a locally free sheaf
\begin{align*}
    E_a := \mathcal{O}_X \oplus \mathscr{L}^{-1} \oplus \dots \oplus \mathscr{L}^{-(n-1)}
\end{align*}
with a Higgs field
\begin{align*}
\theta_a = 
\begin{pmatrix}
     & \alpha & & \\
    &  & \ddots & \\
    & & & \alpha \\
    \tau \alpha & & & \\
\end{pmatrix}.
\end{align*}
Clearly, $h_X( (E_a,\theta_a)  ) = (0,\dots,0,-a)$, and the Hitchin fiber $h^{-1}_X(a)$ is non-empty.

\end{proof}

\begin{rem}
The construction of the above Higgs bundle $(E_a,\theta_a)$ is indeed a global version of the Higgs bundle constructed in Remark \ref{rem_cons_Hit}, where $\tau$ and $\alpha$ correspond to $g$ and $\sum_{i=1}^d u_i \omega_i$ respectively. Moreover, the decomposition of spectral data $a$ on the regular local ring $R:=\mathcal{O}_{X,x}$ provides another viewpoint that links cyclic spectral data to the symmetric differentials of rank one.
\end{rem}

The above construction of $E_a$ gives a scheme $X'_a$ together with a natural morphism $\pi'_a: X'_a \rightarrow X$. More precisely, we define an $\mathcal{O}_X$-algebra
\begin{align*}
    \mathcal{A}_\tau := \oplus_{i=0}^{n-1} \mathscr{L}^{-i},
\end{align*}
where the multiplication is defined as follows
\begin{align*}
\mathscr{L}^{-i} \otimes \mathscr{L}^{-j} \rightarrow 
\begin{cases}
\mathscr{L}^{-(i+j)} & \text{ if } i+j<n, \\
(\boldsymbol{1} \otimes \tau)(\mathscr{L}^{-(i+j-n)} \otimes \mathscr{L}^{-n}) & \text{ if } i+j \geq n.
\end{cases}
\end{align*}
We define $X'_a := \Spec \, \mathcal{A}_\tau$, and $\alpha:\mathscr{L} \rightarrow \Omega_X^1$ induces a morphism $X'_a \rightarrow T^*_X$. Moreover, the construction of $X'_a$ does not depend on the choice of the decomposition $a=\tau \alpha^n$. In fact, suppose that $(\tau',\alpha') = (c^{-n} \tau, c \alpha)$ for $c \in H^0(X,\mathcal{O}_X^\times)$, and then $\mathcal{A}_\tau \cong \mathcal{A}_{\tau'}$, which are compatible with corresponding morphisms to $T^*_X$.

\begin{lem}\label{lem_X'a}
The scheme $X'_a:= \Spec \, \mathcal{A}_\tau$ satisfies the following properties:
\begin{enumerate}
\item $\pi'_a: X'_a \rightarrow X$ is finite flat of degree $n$;
\item $X'_a$ is reduced and Cohen-Macaulay;
\item the pair $(\tau,\alpha)$ induces a finite morphism $X'_a \rightarrow X_a$ whose scheme-theoretic image is $(X_a)_{\rm red}$;
\item the induced morphism $X'_a \rightarrow (X_a)_{\rm red}$ is an isomorphism at the generic points. In particular, it is finite and birational.
\end{enumerate}
\end{lem}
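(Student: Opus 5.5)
The plan is to reduce everything to the local algebra of \S\ref{sect_nonempty}. Choose an open cover of $X$ by affine opens $U$ on which both $\mathscr{L}$ and $\Omega^1_X$ are trivial, and shrink further so that each local ring is handled by the earlier results. On such a $U$, fix a generator $e$ of $\mathscr{L}^{-1}$ and write $\tau = g\, e^{-n}$ with $g \in \mathcal{O}(U)$. Then the multiplication rule defining $\mathcal{A}_\tau$ shows
\begin{align*}
    \mathcal{A}_\tau|_U \cong \mathcal{O}_U[s]/(s^n - g),
\end{align*}
where $s$ corresponds to $e$. Writing $\alpha = e^{-1}\sum_i u_i \omega_i$, we recover exactly the ring $S'$ and the map $\varphi$ of Proposition \ref{prop_dim_d} and Remark \ref{rem_cons_Hit}, since $a = g\,(\sum_i u_i\omega_i)^n$.

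For (1), note that $\mathcal{A}_\tau$ is locally free of rank $n$, with basis $1,s,\dots,s^{n-1}$; hence $\pi'_a$ is finite and flat of degree $n$. For (2), Cohen-Macaulayness follows from the argument of Proposition \ref{prop_local_surj}: $S'$ is free over the regular ring $R$, and equivalently it is a hypersurface in $R[s]$. For reducedness I would use that $s^n - g$ is separable over $K$ because $g \neq 0$ (recall $a \neq 0$), so $S'_K$ is a product of fields. Since $S'$ is Cohen-Macaulay it satisfies $(S_1)$, hence has no embedded primes. Moreover $S'$ is generically reduced, being $R_0$ at its minimal primes, which lie over the generic point of $X$ because $S'$ is flat over $R$. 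Together these give that $S'$ is reduced. This Serre-criterion step is the one point I regard as requiring care, though it is standard.

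For (3), the morphism $X'_a \to T^*_X$ is the one induced by $\alpha$; locally it is $t_i \mapsto u_i s$. It kills $I$, since $\underline{t}^\alpha \mapsto \underline{u}^\alpha s^n = g\underline{u}^\alpha = a_\alpha$, so it factors through $X_a$. It is finite because $\mathcal{A}_\tau$ is finite over $\mathcal{O}_X$. By Lemma \ref{lem_nilrad}, $\ker\varphi$ is the nilradical, and therefore the scheme-theoretic image is $(X_a)_{\rm red}$. Because the local descriptions glue, as they come from the global map $\alpha$, this identification is global.

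For (4), Remark \ref{rem_generic_fiber} gives $S_K \cong S'_K$ via $\varphi$. Hence $(S_{\rm red})_K \to S'_K$ is an isomorphism, and the generic points of both schemes lie over the generic point of $X$. Combined with finiteness from (3), this gives a finite birational morphism. The only mildly delicate point is checking that the local identifications do not depend on the choice of trivializations. This follows from the remark preceding the lemma that $\mathcal{A}_\tau$ is independent of the decomposition up to units $c$.
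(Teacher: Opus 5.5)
Your proposal is correct and follows the same route as the paper. The paper's proof consists only of trivializing $\mathscr{L}$ and invoking Lemma~\ref{lem_nilrad} and Proposition~\ref{prop_local_surj}; you make explicit the local identification $\mathcal{A}_\tau|_U\cong\mathcal{O}_U[s]/(s^n-g)$, the reducedness of $S'$, and the generic isomorphism coming from Remark~\ref{rem_generic_fiber}. There is one harmless notational slip: with $e$ a generator of $\mathscr{L}^{-1}$ you should write $\alpha=e\sum_i u_i\omega_i$, not $e^{-1}\sum_i u_i\omega_i$, so that $\tau\alpha^n=g(\sum_i u_i\omega_i)^n$ as you then use.
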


\begin{proof}
Trivializing $\mathscr{L}$, the assertion follows directly from Lemma \ref{lem_nilrad} and Proposition \ref{prop_local_surj}.
\end{proof}

\begin{thm}\label{thm_BNR_corr}
Let $a$ be nonzero cyclic spectral data. There is an equivalence of the category of Higgs bundles on $X$ with spectral data $a$ and the category of maximal Cohen-Macaulay sheaves of generic rank one on $X'_a$. Furthermore, this correspondence gives an isomorphism between the Hitchin fiber $h^{-1}_X(a)$ and the stack $\mathcal{MCM}_1(X'_a)$ of maximal Cohen-Macaulay sheaves of generic rank one on $X'_a$.
\end{thm}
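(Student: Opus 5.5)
The plan is to factor the correspondence through the two equivalences already available. By Chen--Ng\^o's description (the proposition quoted in the introduction), a Higgs bundle $(E,\theta)$ with spectral data $a$ is the same as a coherent $\mathcal{O}_{X_a}$-module $\mathcal{F}$ with $\pi_{a*}\mathcal{F}=E$ locally free of rank $n$. A finite-type module over a finite $\mathcal{O}_X$-algebra is maximal Cohen--Macaulay exactly when its pushforward to the regular scheme $X$ is locally free, by Auslander--Buchsbaum. The task is therefore to show that the $\mathcal{O}_{X_a}$-module structure on such an $E$ extends uniquely through the finite morphism $X'_a\to X_a$ of Lemma \ref{lem_X'a}(3). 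Conversely, any MCM sheaf $\mathcal{G}$ of generic rank one on $X'_a$ gives $\pi'_{a*}\mathcal{G}$, which is locally free of rank $n$ because $\pi'_a$ is finite flat of degree $n$ by Lemma \ref{lem_X'a}(1). The map $X'_a\to X_a\subseteq T^*_X$ then supplies a Higgs field whose characteristic polynomial is $t^n-a$. This can be checked generically using Remark \ref{rem_generic_fiber}, where $X'_a$ and $X_a$ agree with $K[s]/(s^n-g)$.

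The key local step works over $R=\mathcal{O}_{X,x}$ with $\mathscr{L}$ trivialized, where $X'_a$ becomes $S'=R[s]/(s^n-g)$ and $S\to S'$ sends $t_i\mapsto u_is$. It has two parts.

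\emph{(i) The nilradical acts by zero.} Let $M$ be an $S$-module that is free of finite rank over $R$. Then $M$ is torsion-free over $R$, so $M\hookrightarrow M\otimes_R K$. Over $K$ we have $S_K\cong S'_K$, which is reduced (a product of fields). Hence nilpotents act as zero on $M_K$, and therefore on $M$. So $M$ is an $S_{\mathrm{red}}$-module; by Lemma \ref{lem_nilrad}, $S_{\rm red}=R[\underline{u}s]\subseteq S'$.

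\emph{(ii) $s$ acts on $M$.} We extend the action from $R[\underline{u}s]$ to $S'$. On $M_K$, the element $s$ acts as $(u_is)/u_i$ for any $i$ with $u_i\neq 0$. We must check that this operator preserves $M$. By Corollary \ref{cor_decomp} we may take $\alpha$ not vanishing in codimension one, i.e.\ $\gcd(u_1,\dots,u_d)=1$ in the UFD $R$. At each height-one prime $\mathfrak{p}$ some $u_i$ is a unit in $R_\mathfrak{p}$, so $s$ preserves $M_\mathfrak{p}$. Since $M$ is free over the normal ring $R$, we have $M=\bigcap_{\mathrm{ht}\,\mathfrak{p}=1}M_\mathfrak{p}$, and so $s$ preserves $M$. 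Uniqueness of the extension follows because $M\subseteq M_K$. This saturation argument is the heart of the proof and the main obstacle. It is exactly where saturatedness of $\alpha(\mathscr{L})$ is needed: without it one only reaches the normalization-type issues seen in Example \ref{exmp_spectral_cover_2}. A Hartogs-type extension over the locus where $\alpha$ vanishes is what makes it work.

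To globalize, the local extensions are unique, so they glue along open sets trivializing $\mathscr{L}$. Morphisms of Higgs bundles commute with the $S$-action, hence commute with $s$ on $M_K$, so the functor is fully faithful. For essential surjectivity we need the generic rank: an MCM $\mathcal{G}$ on $X'_a$ with $\pi'_{a*}\mathcal{G}$ of rank $n$ is the same as $\mathcal{G}$ having generic rank one. This holds because $X'_a$ is generically étale of degree $n$ over $X$, with each generic factor of degree matching, via Remark \ref{rem_generic_fiber}. Finally, all steps work in families over a base scheme $T$, since everything is defined by uniqueness of an extended action, and (i)--(ii) only use that the pushforward is locally free. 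This upgrades the equivalence of categories to the isomorphism of stacks $h_X^{-1}(a)\cong\mathcal{MCM}_1(X'_a)$. For the family version of (ii), I would use flatness over $T$ and apply the saturation argument to $X\times T$ relative to $T$, i.e.\ fibrewise $S_2$.
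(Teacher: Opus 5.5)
Your main mechanism is sound, but the step that is supposed to deliver generic rank one is false as stated. You assert that an MCM sheaf $\mathcal{G}$ on $X'_a$ with $\pi'_{a*}\mathcal{G}$ of rank $n$ ``is the same as'' one of generic rank one. You also paraphrase Chen--Ng\^o as identifying Higgs bundles with spectral data $a$ with $\mathcal{O}_{X_a}$-modules whose pushforward is locally free of rank $n$, silently dropping the generic-rank-one clause. Both claims fail whenever $s^n-\tau$ is reducible over $K(X)$. In that case $X'_a$ has several generic points, of degrees $d_j$ with $\sum_j d_j=n$, and the generic ranks $r_j$ are only constrained by $\sum_j r_jd_j=n$.

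Concretely, take $\mathscr{L}=\mathcal{O}_X$, $\tau=1$ and $\alpha$ a nowhere vanishing $1$-form (say $dx_1$ on $\mathbb{A}^d$). Then $a=\alpha^n$ and $X'_a\cong\bigsqcup_{j=0}^{n-1}X$. The sheaf $\mathcal{O}_X^{\oplus n}$ placed on a single sheet has locally free pushforward of rank $n$, so it is MCM by your own Auslander--Buchsbaum criterion, but its generic ranks are $(n,0,\dots,0)$. The corresponding Higgs bundle $(\mathcal{O}_X^{\oplus n},\alpha\cdot\boldsymbol{1})$ satisfies $\theta^n=a$, so it is an $\mathcal{O}_{X_a}$-module, yet its characteristic polynomial is $(t-\alpha)^n\neq t^n-a$.

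So in the direction Higgs $\to$ MCM you must actually use the spectral-data hypothesis. Generically $E_K$ is a module $\bigoplus_j V_j$ over $K[s]/(s^n-\tau)=\prod_j K_j$. The characteristic polynomial of $s$ (i.e.\ of $\vartheta$) on it is $\prod_j f_j^{\dim_{K_j}V_j}$, where $f_j$ is the minimal polynomial of $s$ in $K_j$. This equals $s^n-\tau=\prod_j f_j$ exactly when every $\dim_{K_j}V_j=1$. Equivalently, keep the generic-rank-one clause of Chen--Ng\^o's statement and transport it along $X'_a\to(X_a)_{\rm red}$, which is an isomorphism at the generic points.

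With that repaired, your proof is the paper's argument in local coordinates. The paper never passes through $X_a$. It observes that $E\xrightarrow{\theta}E\otimes\Omega^1_X\to E\otimes(\Omega^1_X/\alpha(\mathscr{L}))$ vanishes generically, because the generic joint eigenvalues are $\zeta^i\sqrt[n]{\tau}\,\alpha$. Since saturation makes the quotient torsion-free, the composite vanishes identically. This produces $\vartheta$ with $\theta=(1\otimes\alpha)\circ\vartheta$. The generic eigenvalues $\zeta^i\sqrt[n]{\tau}$ of $\vartheta$, each occurring once, then give $\vartheta^n=\tau\boldsymbol{1}$ and generic rank one at the same time.

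Your step (ii) is exactly this torsion-freeness argument: $s=u_i^{-1}\theta_i$ on $M_K$, $\gcd(u_1,\dots,u_d)=1$, and $M=\bigcap_{{\rm ht}\,\mathfrak{p}=1}M_{\mathfrak{p}}$. Step (i) and the appeal to Chen--Ng\^o are an avoidable detour. Your treatment of full faithfulness and of families goes beyond what the paper writes down. For (i)--(ii) over a base $T$, the property you actually need is flatness of $M$ over $\mathcal{O}_X$, which is automatic from local freeness on $X\times T$; a fibrewise $S_2$ condition is not the relevant input.
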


\begin{proof}
By Corollary \ref{cor_decomp}, we choose a triple $(\mathscr{L},\tau,\alpha)$ corresponding to the given spectral data $a$. Let $M$ be a maximal Cohen-Macaulay sheaf of generic rank one on $X'_a$. Define $E:=(\pi'_a)_*(M)$. We have
\begin{align*}
    \theta: E & = (\pi'_a)_*(M) \rightarrow (\pi'_a)_*(M \otimes (\pi'_a)^* \mathscr{L}) \\
    & \rightarrow (\pi'_a)_*(M) \otimes \mathscr{L} \xrightarrow{1 \otimes \alpha} (\pi'_a)_*(M) \otimes \Omega_X^1 = E \otimes \Omega_X^1,
\end{align*}
where the first morphism is given by the tautological section. It is easy to check that $E$ is a locally free sheaf of rank $n$ and the morphism $\theta$ is a Higgs field with spectral data $a$.

For the other direction, we start with a Higgs bundle $(E,\theta)$ with spectral data $a$. We claim that $\theta$ factors through $E \otimes \mathscr{L} \xrightarrow{1 \otimes \alpha} E \otimes \Omega_X^1$
\begin{center}
\begin{tikzcd}
& E \otimes \mathscr{L} \arrow[rd, "1 \otimes \alpha"] & \\
E \arrow[rr, "\theta"] \arrow[ru, "\vartheta", dotted] & & E \otimes \Omega_X^1 
\end{tikzcd}
\end{center}
such that $\vartheta^n = \tau \boldsymbol{1}$. Then, the morphism $\vartheta: E \rightarrow E \otimes \mathscr{L}$ defines a $\mathcal{O}_{X'_a}$-structure on $E$, and we obtain a coherent sheaf $M$ on $X'_a$. By an argument similar to those in the proofs of Proposition \ref{prop_local_surj} and Theorem \ref{thm_nor_MCM}, $M$ is a maximal Cohen-Macaulay $\mathcal{O}_{X'_a}$-module of generic rank one. 

The claim that $\theta$ factors through $E \otimes \mathscr{L} \xrightarrow{1 \otimes \alpha} E \otimes \Omega_X^1$ is a generalization of \cite[Lemma 4.5]{HL24}. We choose an $n$-th root $\sqrt[n]{\tau_{\bar{K}}}$ of $\tau_{\bar{K}}$, where $\bar{K}$ is the algebraic closure of the fraction field $K(X)$. Then, the generic cycle of the spectral data $a_{\bar{K}} = \tau_{\bar{K}} \alpha^n_{\bar{K}}$ is
\begin{align*}
    (\sqrt[n]{\tau_{\bar{K}}}\alpha_{\bar{K}} ,  \zeta\sqrt[n]{\tau_{\bar{K}}}\alpha_{\bar{K}}, \dots, \zeta^{n-1}\sqrt[n]{\tau_{\bar{K}}}\alpha_{\bar{K}} ),
\end{align*}
where $\zeta$ is a primitive $n$-th root of unity. Then, the composite
\begin{align*}
    E \xrightarrow{\theta} E \otimes \Omega_X^1 \rightarrow E \otimes (\Omega_X^1 / \alpha(\mathscr{L}))
\end{align*}
vanishes generically. Since $\alpha(\mathscr{L})$ is saturated, the quotient $(\Omega_X^1 / \alpha(\mathscr{L}))$ is torsion-free. Therefore, the composite vanishes, and $\theta$ factors through $E \otimes \mathscr{L} \xrightarrow{1 \otimes \alpha} E \otimes \Omega_X^1$. Furthermore, since the generic eigenvalues of $\vartheta$ are 
\begin{align*}
     \sqrt[n]{\tau_{\bar{K}}}, \zeta\sqrt[n]{\tau_{\bar{K}}}, \dots,  \zeta^{n-1}\sqrt[n]{\tau_{\bar{K}}},
\end{align*}
we have $\vartheta^n = \tau \boldsymbol{1}$.
\end{proof}

\begin{rem}
If $X$ is an algebraic surface and $X'_a$ is normal, the category of maximal Cohen-Macaulay sheaves on $X'_a$ is equivalent to the category of reflexive sheaves on $X'_a$. In this special case, the Hitchin fiber is isomorphic to the stack of reflexive sheaves of generic rank one on $X'_a$. Furthermore, He--Liu introduced a tower of Cohen-Macaulayfications of $X_a$, which depends on a discussion of $\tau$ and is helpful to check whether $X'_a$ is normal or not, and we refer the reader to \cite[\S 4.2.2]{HL24} for more details.
\end{rem}

We will follow the notations given in \cite[\S3]{HL24} and attempt to make the construction in the second proof into a section. To simplify notations, we define
\begin{align*}
    U_{\mathscr{L}}: = H^0(X,\mathscr{L}^n), \, W_{\mathscr{L}} := H^0(X, \Omega_X^1 \otimes \mathscr{L}^{-1}),
\end{align*}
and
\begin{align*}
    V_{\mathscr{L}}:= U_{\mathscr{L}} \times W_{\mathscr{L}}
\end{align*}
with a $\mathbb{G}_m$-action defined as
\begin{align*}
    c \cdot (\tau,\alpha) : = (c^{-n} \tau, c \alpha).
\end{align*}
Then we define a subset $W^{\rm sat}_\mathscr{L} \subseteq W_\mathscr{L}$ consisting of sections $\alpha$ that do not vanish in codimension one, which also implies that the image $\alpha(\mathscr{L})$ in $\Omega_X^1$ is saturated (Corollary \ref{cor_decomp}), and we introduce
\begin{align*}
   V_{\mathscr{L}}^{\rm sat,\times} :=  (U_{\mathscr{L}} \backslash \{0\} ) \times W^{\rm sat}_\mathscr{L} \subseteq V_{\mathscr{L}}.
\end{align*}
The induced $\mathbb{G}_m$-action on $V_{\mathscr{L}}^{\rm sat,\times}$ is well-defined. 

\begin{defn}
Define
\begin{align*}
    \mathcal{P}_X := \{ \mathscr{L} \in {\rm Pic}(X) \, | \,  U_\mathscr{L} \neq 0, \, W^{\rm sat}_\mathscr{L} \neq 0  \}.
\end{align*}
\end{defn}

For each $\mathscr{L} \in \mathcal{P}_X$, we have a natural morphism
\begin{align*}
V_{\mathscr{L}}^{\rm sat,\times} \rightarrow \mathscr{B}^{\rm cyc,\times}_{X}, \quad (\tau,\alpha) \mapsto \tau \alpha^n,
\end{align*}
which is compatible with the $\mathbb{G}_m$-action. Then, we obtain a morphism
\begin{align*}
    \bar{q}_{\mathscr{L}}: [V^{\rm sat,\times}_{\mathscr{L}} / \mathbb{G}_m] \rightarrow \mathscr{B}^{\rm cyc,\times}_{X}.
\end{align*}

\begin{thm}\label{thm_stack_Hit_sect}
For each $\mathscr{L} \in \mathcal{P}_X$, there exists a morphism $\bar{s}_{\mathscr{L}}: [V^{\rm sat,\times}_{\mathscr{L}} / \mathbb{G}_m] \rightarrow \mathscr{M}^{\rm cyc,\times}_X$ such that $\bar{q}_{\mathscr{L}} = {\rm sd}_X \circ \bar{s}_{\mathscr{L}}$, i.e.,
\begin{center}
\begin{tikzcd}
{[V_{\mathscr{L}}^{\mathrm{sat}, \times} / \mathbb{G}_m]} \arrow[r,"\bar{s}_\mathscr{L}", dotted] \arrow[rd,"\bar{q}_\mathscr{L}"] & \mathscr{M}^{\rm cyc,\times}_X \arrow[d, "{\rm sd}_X"] \\
& \mathscr{B}^{\rm cyc, \times}_X \, .
\end{tikzcd}
\end{center}
\end{thm}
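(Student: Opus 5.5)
To construct $\bar{s}_{\mathscr{L}}$, I will build a $\mathbb{G}_m$-equivariant family of Higgs bundles over $V^{\rm sat,\times}_{\mathscr{L}}$ and then descend it to the quotient stack. Set $V := V^{\rm sat,\times}_{\mathscr{L}}$ and let $p: X \times V \to X$ be the projection. The tautological sections give $\tau^{\rm univ} \in H^0(X\times V, p^*\mathscr{L}^n)$ and $\alpha^{\rm univ} \in H^0(X \times V, p^*(\Omega_X^1\otimes\mathscr{L}^{-1}))$; for a $T$-point, pull these back along $X\times T \to X\times V$. On $X \times V$ define
\begin{align*}
    \mathcal{E} := p^*\big(\mathcal{O}_X \oplus \mathscr{L}^{-1}\oplus\dots\oplus\mathscr{L}^{-(n-1)}\big),
\end{align*}
and let $\Theta$ be the companion-type matrix from the second proof of Theorem \ref{thm_nonempty}, with entries $\alpha^{\rm univ}$ above the diagonal and $\tau^{\rm univ}\alpha^{\rm univ}$ in the lower left corner. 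The relation $\Theta\wedge\Theta=0$ holds because every entry is a multiple of the single $1$-form-valued section $\alpha^{\rm univ}$. Hence $\Theta = \vartheta\otimes\alpha^{\rm univ}$ for an $\mathcal{O}$-linear $\vartheta$, and the commutator vanishes. A direct computation with $\vartheta^n=\tau^{\rm univ}\boldsymbol{1}$ gives characteristic polynomial $t^n - \tau^{\rm univ}(\alpha^{\rm univ})^n$. So $(\mathcal{E},\Theta)$ is a $V$-family of rank $n$ Higgs bundles whose image under $h_X$ is $(0,\dots,0,-\tau\alpha^n)$. This defines $s: V \to \mathscr{M}_X$.

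Next I will lift the $\mathbb{G}_m$-action to $(\mathcal{E},\Theta)$. For $c\in\mathbb{G}_m$, define the automorphism $\phi_c = {\rm diag}(1,c^{-1},c^{-2},\dots,c^{-(n-1)})$ of $\mathcal{E}$. I expect a check that
\begin{align*}
    \phi_c^{-1}\,\Theta(c^{-n}\tau,c\alpha)\,\phi_c = \Theta(\tau,\alpha).
\end{align*}
On the superdiagonal, the entry $c\alpha$ is conjugated by the factor $c^{-1}$ to give $\alpha$. In the corner, $c^{-n}\tau\cdot c\alpha$ is conjugated by $c^{\,n-1}$ to give $\tau\alpha$. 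The precise exponent convention may need adjusting, for example using $c^{i}$ instead of $c^{-i}$, but some diagonal scaling will work. Since $c\mapsto\phi_c$ is a group homomorphism, the isomorphisms satisfy the cocycle condition automatically. This makes $(\mathcal{E},\Theta)$ a $\mathbb{G}_m$-equivariant family, equivalently a morphism $[V/\mathbb{G}_m]\to\mathscr{M}_X$. Alternatively, I could pass to the quotient by taking $\mathscr{L}$ twisted by the universal $\mathbb{G}_m$-torsor $P \to [V/\mathbb{G}_m]$ and forming the same construction with $\mathscr{L}\boxtimes\mathcal{O}(1)$.

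It remains to show the map lands in $\mathscr{M}^{\rm cyc,\times}_X = \mathscr{M}_X\times_{\mathscr{B}_X}\mathscr{B}^{\rm cyc,\times}_X$ and satisfies $\bar{q}_{\mathscr{L}}={\rm sd}_X\circ\bar{s}_{\mathscr{L}}$. For this I must identify ${\rm sd}_X$ of the family with the morphism $(\tau,\alpha)\mapsto\tau\alpha^n$ into $\mathscr{B}_X$, as morphisms of stacks and not just on $k$-points. I expect this to be the main obstacle, since ${\rm sd}_X$ is defined through the Chow-scheme factorization of Chen--Ng\^o rather than through $\mathscr{A}_X$.

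My plan is to use that $\iota_X:\mathscr{B}_X\to\mathscr{A}_X$ is a closed immersion, hence a monomorphism. Then two morphisms to $\mathscr{B}_X$ agree as soon as their compositions with $\iota_X$ agree. Both compositions are $(\tau,\alpha)\mapsto(0,\dots,0,-\tau\alpha^n)$: one by the characteristic polynomial computation above, the other by definition of $\bar{q}_{\mathscr{L}}$. The image is nonzero because $\tau\neq0$ and $\alpha\in W^{\rm sat}_{\mathscr{L}}$ is nonzero on an integral $X$. Hence the map factors through the open substack $\mathscr{B}^{\rm cyc,\times}_X$. This gives the factorization through the fiber product and the required commutativity.
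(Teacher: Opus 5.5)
Your proposal is correct and follows essentially the same route as the paper: it uses the same family $(E_{\mathscr{L}},\theta_{\tau,\alpha})$, and descends it to $[V^{\rm sat,\times}_{\mathscr{L}}/\mathbb{G}_m]$ via the diagonal conjugation $D(c)=\mathrm{diag}(1,c,\dots,c^{n-1})$ with $D(c_1)D(c_2)=D(c_1c_2)$. Your $\phi_c$ is $D(c)^{-1}$, your exponent check is correct as written, and your argument that ${\rm sd}_X\circ\bar{s}_{\mathscr{L}}=\bar{q}_{\mathscr{L}}$ (using that $\iota_X$ is a closed immersion, hence a monomorphism) supplies a step the paper only asserts.
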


\begin{proof}
We go back to the construction of the Higgs bundle in the second proof of Theorem \ref{thm_nonempty} and modify the notations slightly. Once a line bundle $\mathscr{L}$ is fixed, so is the underlying locally free sheaf and it is defined as 
\begin{align*}
    E_{\mathscr{L}}: = \mathcal{O}_X \oplus \mathscr{L}^{-1} \oplus \dots \oplus \mathscr{L}^{-(n-1)}.
\end{align*}
Then, we define a morphism
\begin{align*}
    V^{\rm sat,\times}_{\mathscr{L}} \rightarrow \mathscr{M}^{\rm cyc,\times}_X , \quad (\tau,\alpha) \mapsto (E_{\mathscr{L}},\theta_{\tau,\alpha}),
\end{align*}
where
\begin{align*}
\theta_{\tau,\alpha} = 
\begin{pmatrix}
     & \alpha & & \\
    &  & \ddots & \\
    & & & \alpha \\
    \tau \alpha & & & \\
\end{pmatrix}.
\end{align*}
Given any $c \in \Gamma(X,\mathcal{O}_X^\times)$, it is easy to check that the Higgs fields $\theta_{\tau,\alpha}$ and $\theta_{c^{-n}\tau,c\alpha}$ are conjugate via a diagonal matrix
\begin{align*}
D(c) = 
\begin{pmatrix}
    1 & & & \\
    & c & & \\
    & & \ddots & \\
    & & & c^{n-1}\\
\end{pmatrix},
\end{align*}
i.e., 
\begin{align*}
    D(c)^{-1} \theta_{\tau,\alpha} D(c) = \theta_{c^{-n}\tau,c\alpha}.
\end{align*}
Moreover, $D(c_1)D(c_2) = D(c_1 c_2)$. Thus, we obtain a morphism
\begin{align*}
    \bar{s}_{\mathscr{L}}: [V^{\rm sat,\times}_{\mathscr{L}} / \mathbb{G}_m] \rightarrow \mathscr{M}^{\rm cyc,\times}_X,
\end{align*}
which satisfies $\bar{q}_{\mathscr{L}} = {\rm sd}_X \circ \bar{s}_{\mathscr{L}}$.
\end{proof}

We want to define a finite flat family over $X \times [V^{\rm sat,\times}_\mathscr{L} / \mathbb{G}_m]$, which we regard as an analogue of the universal family of spectral covers. Let
\begin{align*}
    p_X: X \times V^{\rm sat,\times}_\mathscr{L} \rightarrow X, \, p_V: X \times V^{\rm sat,\times}_\mathscr{L} \rightarrow V^{\rm sat,\times}_\mathscr{L}
\end{align*}
be projections. There exist universal sections
\begin{align*}
    \tau_{\rm univ} \in H^0(X \times V^{\rm sat,\times}_\mathscr{L}, p^*_X \mathscr{L}^n ), \, \alpha_{\rm univ} \in H^0(X \times V^{\rm sat,\times}_\mathscr{L}, p^*_X(\Omega_X^1 \otimes \mathscr{L}^{-1})).
\end{align*}
In the same way as $\mathcal{A}_\tau$ was defined, we set
\begin{align*}
    \mathcal{A}_{\tau,{\rm univ}} := \oplus_{i=0}^{n-1} p^*_X \mathscr{L}^{-i}
\end{align*}
whose multiplication is defined by $\tau_{\rm univ}$, and define
\begin{align*}
    \mathcal{X}_{\mathscr{L},V} := \Spec \, \mathcal{A}_{\tau, {\rm univ}}
\end{align*}
together with a natural finite flat morphism of degree $n$
\begin{align*}
    p_{\mathscr{L},V} : \mathcal{X}_{\mathscr{L},V} \rightarrow X \times V^{\rm sat,\times}_\mathscr{L}.
\end{align*}
The fiber over any point $(\tau,\alpha) \in V^{\rm sat,\times}_\mathscr{L}$ is exactly $X'_a$, where $a=\tau \alpha^n$. 

Now we pull back the section $\alpha_{\rm univ} \in H^0(X \times V^{\rm sat,\times}_\mathscr{L}, p^*_X(\Omega_X^1 \otimes \mathscr{L}^{-1}))$ to $\mathcal{X}_{\mathscr{L},V}$ and we have
\begin{align*}
    p_{\mathscr{L},V}^* (\alpha_{\rm univ}) : p_{\mathscr{L},V}^* ( p^*_X \mathscr{L} ) \rightarrow p_{\mathscr{L},V}^* ( p^*_X \Omega_X^1 ).
\end{align*}
Applying the tautological section of $p_{\mathscr{L},V}^* ( p^*_X \mathscr{L} )$, we obtain a morphism
\begin{align*}
    \mathcal{O}_{\mathcal{X}_{\mathscr{L},V}} \rightarrow p_{\mathscr{L},V}^* ( p^*_X \Omega_X^1).
\end{align*}
Applying $(p_{\mathscr{L},V})_*$, we obtain a morphism
\begin{align*}
    \theta_{\mathscr{L},V} : \mathcal{E}_{\mathscr{L},V} & := (p_{\mathscr{L},V})_* \mathcal{O}_{\mathcal{X}_{\mathscr{L},V}} \rightarrow (p_{\mathscr{L},V})_* p_{\mathscr{L},V}^*  p^*_X \Omega_X^1 \\
    & \cong (p_{\mathscr{L},V})_* \mathcal{O}_{\mathcal{X}_{\mathscr{L},V}} \otimes p^*_X \Omega_X^1 = \mathcal{E}_{\mathscr{L},V} \otimes p^*_X \Omega_X^1.
\end{align*}
Furthermore, regarding the section $\alpha_{\rm univ}$ as a morphism $p^*_X \mathscr{L} \rightarrow p^*_X \Omega_X^1$, it induces a morphism
\begin{align*}
    \Phi_{\mathscr{L},V}: \mathcal{X}_{\mathscr{L},V} \hookrightarrow {\rm tot}(p^*_X \mathscr{L}) \rightarrow {\rm tot}(p^*_X \Omega_X^1) = T^*_X \times V^{\rm sat,\times}_\mathscr{L}.
\end{align*}

\begin{thm}\label{thm_univ_fam}
There exists a finite flat morphism of degree $n$
\begin{align*}
    p_\mathscr{L} : \mathcal{X}_\mathscr{L} \rightarrow X \times [V^{\rm sat,\times}_\mathscr{L} / \mathbb{G}_m]
\end{align*}
satisfying the following conditions:
\begin{enumerate}
\item The pullback of $p_\mathscr{L}$ along the atlas $X \times V^{\rm sat,\times}_\mathscr{L} \rightarrow X \times [V^{\rm sat,\times}_\mathscr{L} / \mathbb{G}_m]$ is isomorphic to $p_{\mathscr{L},V}$.
\begin{center}
\begin{tikzcd}
\mathcal{X}_{\mathscr{L},V} \arrow[r,"p_{\mathscr{L},V}"] \arrow[d] & X \times V^{\rm sat,\times}_\mathscr{L} \arrow[d] \\
\mathcal{X}_\mathscr{L} \arrow[r,"p_\mathscr{L}"] & X \times {[V^{\rm sat,\times}_\mathscr{L} / \mathbb{G}_m]}
\end{tikzcd}    
\end{center}

\item There exists a morphism
\begin{align*}
\Phi_\mathscr{L}: \mathcal{X}_\mathscr{L} \rightarrow T^*_X \times [V^{\rm sat,\times}_\mathscr{L} / \mathbb{G}_m] 
\end{align*}
such that the diagram
\begin{center}
\begin{tikzcd}
\mathcal{X}_\mathscr{L} \arrow[rr, "\Phi_\mathscr{L}"] \arrow[rd, "p_\mathscr{L}"] & &  T^*_X \times [V^{\rm sat,\times}_\mathscr{L} / \mathbb{G}_m] \arrow[ld] \\
& X \times [V^{\rm sat,\times}_\mathscr{L} / \mathbb{G}_m] &
\end{tikzcd}    
\end{center}
commutes.

\item For every geometric point $(\tau,\alpha)$ of $[V^{\rm sat,\times}_\mathscr{L} / \mathbb{G}_m]$, we have
\begin{align*}
    (\mathcal{X}_\mathscr{L})_{(\tau,\alpha)} \cong X'_a,
\end{align*}
where $a=\tau \alpha^n$.
\end{enumerate}
\end{thm}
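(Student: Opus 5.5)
The plan is to construct $\mathcal{X}_\mathscr{L}$ by $\mathbb{G}_m$-equivariant descent. The key input is a $\mathbb{G}_m$-linearization of the algebra $\mathcal{A}_{\tau,{\rm univ}}$ on $X \times V^{\rm sat,\times}_\mathscr{L}$, compatible with its multiplication. Quasi-coherent sheaves of algebras on $X \times [V^{\rm sat,\times}_\mathscr{L}/\mathbb{G}_m]$ are the same as $\mathbb{G}_m$-equivariant quasi-coherent algebras on $X \times V^{\rm sat,\times}_\mathscr{L}$. So such a linearization gives an algebra $\mathcal{A}_\mathscr{L}$ on the quotient, and we set $\mathcal{X}_\mathscr{L}:={\rm Spec}\,\mathcal{A}_\mathscr{L}$. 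Finiteness, flatness and degree $n$ are fppf-local on the base, so they follow from the corresponding properties of $p_{\mathscr{L},V}$. Those hold because $\mathcal{A}_{\tau,{\rm univ}}=\oplus_{i=0}^{n-1}p_X^*\mathscr{L}^{-i}$ is locally free of rank $n$. Property (1) then holds by construction, since relative Spec commutes with base change.

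\textbf{The linearization.} The action is $c\cdot(\tau,\alpha)=(c^{-n}\tau,c\alpha)$, so the universal sections transform as $c^*\tau_{\rm univ}=c^{-n}\tau_{\rm univ}$ and $c^*\alpha_{\rm univ}=c\,\alpha_{\rm univ}$. I would let $\mathbb{G}_m$ act on the summand $p_X^*\mathscr{L}^{-i}$ by the character $c^{i}$, or its inverse depending on conventions; this mirrors the matrix $D(c)$ in the proof of Theorem \ref{thm_stack_Hit_sect}. The main check, and the only real computation, is that the multiplication is equivariant. For $i+j<n$ the product $\mathscr{L}^{-i}\otimes\mathscr{L}^{-j}\to\mathscr{L}^{-(i+j)}$ is the identity on tensors, and the weights add: $c^ic^j=c^{i+j}$. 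For $i+j\ge n$ the product lands in $\mathscr{L}^{-(i+j-n)}$ via contraction with $\tau_{\rm univ}$. The target has weight $c^{i+j-n}$, while $\tau_{\rm univ}$ contributes the factor $c^{-n}$ after pullback by $c$. These match exactly. Equivalently, the grading shows that $\mathcal{A}_{\tau,{\rm univ}}$ is the pushforward of a $\mu_n$-cover-type algebra whose weights agree with the $\mathbb{G}_m$-action. I expect this sign and weight bookkeeping, together with the cocycle condition, to be the main obstacle. The cocycle condition is automatic because the weights are given by characters, in parallel with $D(c_1)D(c_2)=D(c_1c_2)$.

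\textbf{The morphism $\Phi_\mathscr{L}$.} For (2), I would descend $\Phi_{\mathscr{L},V}$. The map $\mathcal{X}_{\mathscr{L},V}\to{\rm tot}(p_X^*\mathscr{L})$ is given on algebras by sending a local generator of $\mathscr{L}^{-1}$ (as a function on ${\rm tot}\,\mathscr{L}$) to the degree-one summand. The map ${\rm tot}(p_X^*\mathscr{L})\to{\rm tot}(p_X^*\Omega^1_X)$ is induced by $\alpha_{\rm univ}$. On functions, $\Phi_{\mathscr{L},V}^*$ sends a linear function $\xi\in p_X^*T_X$ to $\alpha_{\rm univ}^\vee(\xi)\in p_X^*\mathscr{L}^{-1}$, which is the weight-one summand. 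Since $\alpha_{\rm univ}$ has weight $c$, this map is invariant when $\mathbb{G}_m$ acts trivially on $T^*_X$. Hence it descends to a morphism $\mathcal{X}_\mathscr{L}\to T^*_X\times[V^{\rm sat,\times}_\mathscr{L}/\mathbb{G}_m]$ over $X\times[V^{\rm sat,\times}_\mathscr{L}/\mathbb{G}_m]$. The commutativity of the triangle can be checked on the atlas, where it is clear.

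\textbf{Fibers.} For (3), a geometric point of the quotient stack lifts to a geometric point $(\tau,\alpha)$ of $V^{\rm sat,\times}_\mathscr{L}$, unique up to the $\mathbb{G}_m$-action. By (1), the fiber of $\mathcal{X}_\mathscr{L}$ over this point is the fiber of $\mathcal{X}_{\mathscr{L},V}$ over $(\tau,\alpha)$, namely ${\rm Spec}\,\mathcal{A}_\tau=X'_a$ with $a=\tau\alpha^n$. Changing the lift to $(c^{-n}\tau,c\alpha)$ gives an isomorphic $X'_a$, as noted after the definition of $\mathcal{A}_\tau$. Moreover, $\tau\neq0$ and $\alpha\in W^{\rm sat}_\mathscr{L}$, so by Corollary \ref{cor_decomp} this $(\tau,\alpha)$ is exactly the decomposition of $a$ used to define $X'_a$.
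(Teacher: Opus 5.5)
Your proposal is correct and follows the paper's argument. The paper lets $\mathbb{G}_m$ act on ${\rm tot}(p_X^*\mathscr{L})$ by $(\eta,\tau,\alpha)\mapsto(c^{-1}\eta,c^{-n}\tau,c\alpha)$, which is exactly your linearization of $\mathcal{A}_{\tau,{\rm univ}}$, phrased on spaces instead of algebras; this also settles the sign you left open, since $\eta^n=\tau$ becomes homogeneous and $\alpha(\eta)$ becomes invariant. It then sets $\mathcal{X}_\mathscr{L}=[\mathcal{X}_{\mathscr{L},V}/\mathbb{G}_m]$ and descends $p_{\mathscr{L},V}$ and $\Phi_{\mathscr{L},V}$, with $T^*_X$ carrying the trivial action. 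Your version is more explicit than the paper's, which only asserts equivariance and calls (3) clear. In particular, your check that $(\tau,\alpha)\in V^{\rm sat,\times}_\mathscr{L}$ is the decomposition of Corollary \ref{cor_decomp} supplies that omitted step.
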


\begin{proof}
The $\mathbb{G}_m$-action on the total space ${\rm tot}(p_X^* \mathscr{L})$ is defined as 
\begin{align*}
    \mathbb{G}_m \times {\rm tot}(p_X^* \mathscr{L}) \rightarrow {\rm tot}(p_X^* \mathscr{L}), \quad (c,(\eta,\tau,\alpha)) \mapsto (c^{-1}\eta, c^{-n} \tau, c\alpha),
\end{align*}
which induces a $\mathbb{G}_m$-action on $\mathcal{X}_{\mathscr{L},V}$. We define
\begin{align*}
    \mathcal{X}_\mathscr{L}:=[\mathcal{X}_{\mathscr{L},V} / \mathbb{G}_m].
\end{align*}
Since the morphism 
\begin{align*}
    p_{\mathscr{L},V}: \mathcal{X}_{\mathscr{L},V} \rightarrow X \times V^{\rm sat,\times}_\mathscr{L}
\end{align*}
is $\mathbb{G}_m$-equivariant, the morphism $p_{\mathscr{L},V}$ descends to a finite flat morphism
\begin{align*}
    p_\mathscr{L}: \mathcal{X}_\mathscr{L} =[\mathcal{X}_{\mathscr{L},V} / \mathbb{G}_m] \rightarrow [X \times V^{\rm sat,\times}_\mathscr{L} / \mathbb{G}_m] \cong X \times [ V^{\rm sat,\times}_\mathscr{L} / \mathbb{G}_m],
\end{align*}
which satisfies the first condition by definition. 

We equip the cotangent bundle $T^*_X$ with the trivial $\mathbb{G}_m$-action. Then, we have
\begin{align*}
    [(T^*_X \times V^{\rm sat,\times}_\mathscr{L}) / \mathbb{G}_m] \cong T^*_X \times [V^{\rm sat,\times}_\mathscr{L} / \mathbb{G}_m].
\end{align*}
Therefore, the $\mathbb{G}_m$-equivariant morphism $\Phi_{\mathscr{L},V}$ descends to 
\begin{align*}
    \Phi_\mathscr{L}: \mathcal{X}_\mathscr{L} \rightarrow T^*_X \times [V^{\rm sat,\times}_\mathscr{L} / \mathbb{G}_m],
\end{align*}
which satisfies the second condition.

The third condition is clear from the construction.
\end{proof}

\begin{rem}
Define $E_\mathscr{L}:=(p_\mathscr{L})_* \mathcal{O}_{\mathcal{X}_\mathscr{L}}$ together with a Higgs field 
\begin{align*}
    \theta_\mathscr{L} : \mathcal{E}_\mathscr{L} \rightarrow \mathcal{E}_\mathscr{L} \otimes p^*_X \Omega_X^1
\end{align*}
induced by the Higgs field $\theta_{\mathscr{L},V} : \mathcal{E}_{\mathscr{L},V} \rightarrow \mathcal{E}_{\mathscr{L},V} \otimes p^*_X \Omega_X^1$ on $E_{\mathscr{L},V}$. Then, we obtain a $[V^{\rm sat,\times}_\mathscr{L} /  \mathbb{G}_m]$-family of Higgs bundles on $X$, and it induces a morphism $[V^{\rm sat,\times}_\mathscr{L} /  \mathbb{G}_m] \rightarrow \mathscr{M}^{\rm cyc,\times}_X$, which is exactly the section constructed in Theorem \ref{thm_stack_Hit_sect}.
\end{rem}

\section{Projective Case}

In the previous section, we defined
\begin{align*}
    \bar{q}_{\mathscr{L}}: [V^{\rm sat,\times}_{\mathscr{L}} / \mathbb{G}_m] \rightarrow \mathscr{B}^{\rm cyc,\times}_{X}.
\end{align*}
and constructed a morphism $\bar{s}_{\mathscr{L}}: [V^{\rm sat,\times}_{\mathscr{L}} / \mathbb{G}_m] \rightarrow \mathscr{M}^{\rm cyc,\times}_X$ such that $\bar{q}_{\mathscr{L}} = {\rm sd}_X \circ \bar{s}_{\mathscr{L}}$ in Theorem \ref{thm_stack_Hit_sect}. In this section, let $X$ be a connected smooth projective variety and we construct an analogue of the Hitchin section. We start with the following definition.

\begin{defn}
We define $\mathscr{B}^{\rm cyc}_{\mathscr{L}}$ to be the reduced locally closed subscheme of $\mathscr{B}^{\rm cyc,\times}_{X}$ whose underlying space is $\bar{q}_{\mathscr{L}}([V^{\rm sat,\times}_{\mathscr{L}} / \mathbb{G}_m]  )$.
\end{defn}

Based on the definition of $\mathscr{B}^{\rm cyc}_{\mathscr{L}}$, we have the following commutative diagram
\begin{center}
\begin{tikzcd}
{[V_{\mathscr{L}}^{\mathrm{sat}, \times} / \mathbb{G}_m]} \arrow[r,"\bar{s}_\mathscr{L}"] \arrow[d,"\bar{q}_\mathscr{L}"] & \mathscr{M}^{\rm cyc,\times}_X \arrow[d, "{\rm sd}_X"] \\
\mathscr{B}^{\rm cyc}_{\mathscr{L}} \arrow[r, hook, "\iota_\mathscr{L}"] & \mathscr{B}^{\rm cyc, \times}_X \, .
\end{tikzcd}
\end{center}
Although we do not know whether $\mathscr{B}^{\rm cyc, \times}_X$ is reduced or not, we have
\begin{align*}
    \mathscr{B}^{\rm cyc,\times}_X(k) = \bigcup_{\mathscr{L} \in \mathcal{P}_X} \mathscr{B}^{\rm cyc}_\mathscr{L}(k),
\end{align*}
and $\mathscr{B}^{\rm cyc}_{\mathscr{L}}(k)$ consists of nonzero cyclic spectral data $a$ corresponding to a triple $(\mathscr{L},\tau,\alpha)$ given in Corollary \ref{cor_decomp}. 

Recall the notations given in \S\ref{sect_Hit_sect}
\begin{align*}
U_{\mathscr{L}}: = H^0(X,\mathscr{L}^n), \, W_{\mathscr{L}} := H^0(X, \Omega_X^1 \otimes \mathscr{L}^{-1}), \, V_{\mathscr{L}}:= U_{\mathscr{L}} \times W_{\mathscr{L}}
\end{align*}
with a $\mathbb{G}_m$-action on $V_\mathscr{L}$ defined as
\begin{align*}
    c \cdot (\tau,\alpha) : = (c^{-n} \tau, c \alpha).
\end{align*}

\begin{lem}\label{lem_HE3.11}
Let $X$ be a connected smooth projective variety. Let $a$ be nonzero cyclic spectral data with the associated triple $(\mathscr{L},\tau,\alpha)$. We have either $\dim U_{\mathscr{L}} = 1$ or $\dim W_{\mathscr{L}} = 1$. 
\end{lem}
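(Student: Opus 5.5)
The plan is to argue by contradiction: assume $\dim U_{\mathscr{L}} \geq 2$ and $\dim W_{\mathscr{L}} \geq 2$, and produce a rank-two subsheaf of $\Omega_X^1$ that forces too many sections. This is the higher-rank analogue of He--Liu's Lemma 3.11, whose idea is essentially the Castelnuovo--de Franchis/Bogomolov argument. Since $X$ is projective and connected, $H^0(X,\mathcal{O}_X)=k$. Hence both $U_{\mathscr{L}}$ and $W_{\mathscr{L}}$ are finite-dimensional, and both are nonzero because $\tau\neq 0$ and $\alpha\neq 0$.

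\textbf{Step 1: consequences of $\dim U_{\mathscr{L}}\geq 2$.} Pick $\tau'\in U_{\mathscr{L}}$ not proportional to $\tau$. Then $f=\tau'/\tau$ is a nonconstant rational function on $X$. Consider the $n$-th root cover: $\mathscr{L}$ has the nonzero section $\tau$ of $\mathscr{L}^n$, so $\mathscr{L}$ has Iitaka dimension $\kappa(\mathscr{L})\geq 1$.

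\textbf{Step 2: consequences of $\dim W_{\mathscr{L}}\geq 2$.} Pick $\alpha'\in W_{\mathscr{L}}$ not proportional to $\alpha$. Both $\alpha$ and $\alpha'$ are maps $\mathscr{L}\to\Omega_X^1$. The image of $\alpha$ is saturated by Corollary \ref{cor_decomp}, so there are two cases.

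Case (a): $\alpha'$ factors through $\alpha(\mathscr{L})$. Then $\alpha'=h\alpha$ with $h\in H^0(X,\mathcal{O}_X)=k$, which contradicts the choice of $\alpha'$.

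Case (b): $\alpha\wedge\alpha'\neq 0$. This gives an injection $\mathscr{L}^{2}\hookrightarrow \Omega_X^2$ given by $\alpha\wedge\alpha'$, i.e. a rank-two subsheaf $\mathscr{F}\subseteq\Omega_X^1$ generated by $\alpha(\mathscr{L}),\alpha'(\mathscr{L})$ with $\det\mathscr{F}\supseteq\mathscr{L}^2$.

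\textbf{Step 3: contradiction via Bogomolov's inequality.} Bogomolov's theorem says that a rank-one subsheaf $\mathscr{N}\subseteq\Omega^p_X$ has $\kappa(\mathscr{N})\leq p$. Under our assumptions, $\mathscr{L}^{2}\subseteq\Omega^2_X$ has $\kappa\geq 1$. That alone is not a contradiction, so I would combine the two extra sections instead. Consider the rational $1$-forms $\beta=\sqrt[n]{\tau}\,\alpha$ and $\beta'=\sqrt[n]{\tau}\,\alpha'$ on the cyclic cover $X'_a$ from Lemma \ref{lem_X'a}, or more cleanly on its normalization/resolution $Y$. These are regular, since $\alpha\otimes s$ with $s^n=\tau$ is a section of $\Omega^1$ pulled back. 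Twisting by $\tau'/\tau$ then gives two further forms $(\tau'/\tau)^{1/n}\beta$, and so on. The goal is to get on $Y$ a rank-one subsheaf of $\Omega^2_Y$ spanned by $\beta\wedge\beta'$ whose Iitaka dimension is at least $3$, or alternatively closed forms with $\beta\wedge\beta'=0$ forcing a fibration (Castelnuovo--de Franchis). I would then use the $\mu_n$-eigenspace decomposition of $H^0(Y,\Omega^1_Y)$ to check that the sections $s\alpha, s\alpha'$ and $s'\alpha$, where $s'^n=\tau'$ lives on a further cyclic cover, generate too large a linear system in the sense of Bogomolov's bound $\kappa\leq 2$ for $\Omega^2$.

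\textbf{Main obstacle.} Step 3 is the crux: precisely extracting a contradiction from $\dim U\geq2$ together with $\dim W\geq 2$. My first attempt would be to mimic He--Liu's proof for $n=2$ verbatim. They use that the forms $\alpha_i\otimes\tau_j^{1/n}$ become global holomorphic $1$-forms on a common cyclic cover. On that cover, the products $\alpha_i\wedge\alpha_{i'}\otimes\tau_j^{2/n}$ give a subsheaf $\mathscr{L}^2$-type line in $\Omega^2$ with at least three independent monomial sections, violating Bogomolov's bound, or else the wedges vanish, reducing to case (a). Checking that the $n$-th root covers remain manageable, i.e. normal enough for Bogomolov to apply after resolution, is the technical point I expect to take the most care.
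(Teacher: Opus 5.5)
Your Steps 1--2 are sound, with one correction: it is the two independent sections $\tau,\tau'$, not $\tau$ alone, that give $\kappa(\mathscr{L})\ge 1$. Your case (a) also finishes correctly once case (b) is ruled out. The gap is Step 3. No contradiction is actually produced there, and the mechanism you propose cannot produce one.

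\begin{itemize}
\item Bogomolov's theorem bounds the Iitaka dimension. Exhibiting three, or any finite number of, independent sections of a line subsheaf of $\Omega^2_Y$ contradicts nothing.
\item The sections you propose in the line spanned by $\beta\wedge\beta'$ are $\sigma^2\,\alpha\wedge\alpha'$ with $\sigma^n=\tau$, and their variants with $\tau'$. Up to the fixed factor $\alpha\wedge\alpha'$, these are sections of pullbacks of powers of $\mathscr{L}$. But Bogomolov applied to $\alpha(\mathscr{L})\subseteq\Omega^1_X$ already forces $\kappa(\mathscr{L})\le 1$, so growth of order $m^3$ is out of reach along this route.
\item Your alternative (closed forms with $\beta\wedge\beta'=0$) uses the wrong pair. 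With $\beta=\sqrt[n]{\tau}\,\alpha$ and $\beta'=\sqrt[n]{\tau}\,\alpha'$ one has $\beta\wedge\beta'=\tau^{2/n}\,\alpha\wedge\alpha'\neq 0$, precisely in case (b).
\end{itemize}

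The missing input is the Castelnuovo--de Franchis type statement the paper takes from \cite[Theorem 2 and Proposition 6]{Rei78} (or \cite[Theorem 3.10]{HL24}). It says that if $\kappa(\mathscr{L})\ge 1$, then any two elements of $W^{\rm sat}_{\mathscr{L}}$ are proportional. The paper then concludes because $W^{\rm sat}_{\mathscr{L}}$ is a nonempty open subset of $W_{\mathscr{L}}$.

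You can prove what you need directly by pairing the \emph{same} $\alpha$ with the \emph{two} sections $\tau,\tau'$:
\begin{enumerate}
\item Let $\mu\colon Y\to X$ be a resolution of the normalization of $X$ in the field obtained by adjoining $n$-th roots of $\tau$ and $\tau'$. Then $Y$ is smooth projective, $\mu$ is generically finite (hence generically \'etale in characteristic zero), and there are roots $\sigma,\sigma'\in H^0(Y,\mu^*\mathscr{L})$ of $\mu^*\tau,\mu^*\tau'$.
\item Through $\mu^*\Omega^1_X\to\Omega^1_Y$, both $\sigma\alpha$ and $\sigma'\alpha$ are global $1$-forms on $Y$. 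No normality issue arises, since they are images of sections of $\mu^*\Omega^1_X$. Hence they are closed.
\item Write $\sigma'=\phi\,\sigma$ with $\phi^n=f:=\tau'/\tau$. Then $0=d(\phi\,\sigma\alpha)=d\phi\wedge\sigma\alpha$. Hence $df\wedge\alpha=0$ on $Y$, and so on $X$. Here $df\neq 0$ because $f$ is nonconstant.
\item The same computation with $\alpha'$ gives $df\wedge\alpha'=0$. Therefore $\alpha\wedge\alpha'=0$, and case (b) is impossible.
\end{enumerate}
With Step 3 replaced by this, your argument is complete. Because you fix the saturated $\alpha$ and let $\alpha'$ range over all of $W_{\mathscr{L}}$, it even avoids needing $W^{\rm sat}_{\mathscr{L}}$ to be open.
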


\begin{proof}
This lemma is a generalization of \cite[Proposition 3.11]{HL24}, which proves the case of rank two, and the proof is also similar to that of \cite[Proposition 3.11]{HL24}. We include the idea here for completeness. Suppose that $\dim U_{\mathscr{L}} \geq 2$, i.e., $h^0(X,\mathscr{L}^n) \geq 2$. Then, $\kappa(X,\mathscr{L}) \geq 1$. Applying \cite[Theorem 2 and Proposition 6]{Rei78} or \cite[Theorem 3.10]{HL24}, any two elements in $W^{\rm sat}_\mathscr{L}$ are proportional. Since $W^{\rm sat}_\mathscr{L}$ is open and non-empty, we have $\dim W_{\mathscr{L}} =1$.
\end{proof}

\begin{lem}\label{lem_B_cyc_L}
Let $X$ be a connected smooth projective variety. For every $\mathscr{L} \in \mathcal{P}_X$, the quotient stack $[V^{\rm sat,\times}_\mathscr{L} /  \mathbb{G}_m]$ is represented by a reduced scheme, and $\bar{q}_\mathscr{L}$ induces an isomorphism
\begin{align*}
    [V^{\rm sat,\times}_\mathscr{L} /  \mathbb{G}_m] \cong \mathscr{B}^{\rm cyc}_\mathscr{L}.
\end{align*}
\end{lem}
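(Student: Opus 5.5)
Using Lemma \ref{lem_HE3.11}, I split into the two cases $\dim W_{\mathscr{L}} = 1$ and $\dim U_{\mathscr{L}} = 1$. In each case I will write the quotient stack explicitly as a reduced scheme and then show that $\bar{q}_{\mathscr{L}}$ is a monomorphism onto its image with an inverse. Since $X$ is projective, $H^0(X,\mathcal{O}_X^\times) = k^\times$. So the equivalence $(\tau,\alpha)\sim(c^{-n}\tau,c\alpha)$ of Corollary \ref{cor_decomp} is exactly the $\mathbb{G}_m$-action on $V_{\mathscr{L}}$. By the uniqueness part of Corollary \ref{cor_decomp}, $\bar{q}_{\mathscr{L}}$ is injective on $k$-points, and its image is $\mathscr{B}^{\rm cyc}_{\mathscr{L}}(k)$ by definition.

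\textbf{Case $\dim W_{\mathscr{L}}=1$.} Here $W^{\rm sat}_{\mathscr{L}} = W_{\mathscr{L}}\setminus\{0\}\cong\mathbb{G}_m$, generated by a fixed $\alpha_0$, and $\mathbb{G}_m$ acts simply transitively on this factor. Hence
\begin{align*}
[V^{\rm sat,\times}_{\mathscr{L}}/\mathbb{G}_m] \cong U_{\mathscr{L}}\setminus\{0\},
\end{align*}
where a section of $U_{\mathscr{L}}\setminus\{0\}$ is sent to $(\tau,\alpha_0)$. This is a smooth, hence reduced, variety. The map $\bar{q}_{\mathscr{L}}$ becomes $\tau\mapsto\tau\alpha_0^n$, which is the restriction of the linear map
\begin{align*}
H^0(X,\mathscr{L}^n)\to H^0(X,S^n\Omega_X^1),\quad \tau \mapsto \tau\alpha_0^n.
\end{align*}
This linear map is injective, because $\alpha_0$ is generically nonzero and $X$ is integral. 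An injective linear map of affine spaces is a closed immersion. Its image is $\mathscr{B}^{\rm cyc}_{\mathscr{L}}$ with the reduced structure, so we obtain the isomorphism.

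\textbf{Case $\dim U_{\mathscr{L}}=1$.} Now $U_{\mathscr{L}}\setminus\{0\}\cong\mathbb{G}_m$, generated by a fixed $\tau_0$, and $\mathbb{G}_m$ acts on it through $c\mapsto c^{-n}$. This action is transitive with stabilizer $\mu_n$. Hence
\begin{align*}
[V^{\rm sat,\times}_{\mathscr{L}}/\mathbb{G}_m]\cong[W^{\rm sat}_{\mathscr{L}}/\mu_n],
\end{align*}
with $\mu_n$ acting on $W^{\rm sat}_{\mathscr{L}}$ by scaling. Since $0\notin W^{\rm sat}_{\mathscr{L}}$, this action is free, so the quotient is the scheme $W^{\rm sat}_{\mathscr{L}}/\mu_n$. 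It is an open subset of the affine cone $W_{\mathscr{L}}/\!/\mu_n$ minus the vertex, which is normal and in particular reduced. The map $\bar{q}_{\mathscr{L}}$ is $\alpha\mapsto\tau_0\alpha^n$. This factors through the Veronese-type map
\begin{align*}
W_{\mathscr{L}}/\!/\mu_n\to H^0(X,S^n\Omega_X^1\otimes\mathscr{L}^{-n}),
\end{align*}
followed by multiplication by $\tau_0$, which is injective and linear. The Veronese map is a closed immersion. This follows from the argument of Remark \ref{rem_isom}: $W/\!/\mu_n=\operatorname{Spec}$ of the degree-divisible-by-$n$ part of ${\rm Sym}\,W^\vee$, which is generated in degree $n$. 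We also need that $\alpha\mapsto\alpha^n$ does not lose information, i.e. that the induced map on coordinate rings is surjective. The multiplication maps $S^nW^\vee\to H^0$-dual are surjective because $\alpha^n$ determines the degree-$n$ monomials in the coordinates of $\alpha$; one checks this generically on $X$, where $\Omega^1$ restricted to a point is a vector space and $S^n$ of linear forms separates them.

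The main obstacle is making the second case rigorous scheme-theoretically. We need $\alpha\mapsto\alpha^n$ to be a closed immersion of $W/\!/\mu_n$, not merely injective on points. It then follows that the image with its reduced structure agrees with $\mathscr{B}^{\rm cyc}_{\mathscr{L}}$ and that $\bar{q}_{\mathscr{L}}$ is an isomorphism onto it. I would handle this by choosing a point $x\in X$ where the evaluation $W_{\mathscr{L}}\to(\Omega^1_X\otimes\mathscr{L}^{-1})_x$ is injective. If no such point exists, I would instead use finitely many points, or the generic point. Then I reduce to the linear-algebra statement that $v\mapsto v^n$ embeds $V/\!/\mu_n$ into $S^nV$, which is Remark \ref{rem_isom} over a field. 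If this fails, a fallback is to argue that $\bar{q}_{\mathscr{L}}$ is a bijective morphism from a normal variety onto the reduced image and is unramified. Zariski's main theorem would then apply, provided the image is normal; this is where I would need the most care.
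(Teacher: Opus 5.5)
Your Case 1, and your reduction in Case 2 to the free quotient $W^{\rm sat}_{\mathscr{L}}/\mu_n$, match the paper. At the crucial point the paper only asserts that $\alpha\mapsto\tau_0\alpha^n$ gives a locally closed immersion $W^{\rm sat}_{\mathscr{L}}/\mu_n\to H^0(X,S^n\Omega^1_X)$.

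\textbf{The gap.} Your justification of that step does not work. You claim the map on the whole cone $W_{\mathscr{L}}/\!/\mu_n\to H^0(X,S^n(\Omega^1_X\otimes\mathscr{L}^{-1}))$ is a closed immersion. This is equivalent to injectivity of the multiplication map $S^nW_{\mathscr{L}}\to H^0(X,S^n(\Omega^1_X\otimes\mathscr{L}^{-1}))$, which is false in general. Your pointwise check also needs an injective evaluation $W_{\mathscr{L}}\to(\Omega^1_X\otimes\mathscr{L}^{-1})_x$, which is impossible once $\dim W_{\mathscr{L}}>d$.

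A counterexample comes from Remark \ref{rem_non-alg}: take $X=C\times E$, $\mathscr{L}=\mathcal{O}_X$, $n=2$ and $g(C)\ge 4$. Then $U_{\mathscr{L}}=k$ and $p_E^*u_E\in W^{\rm sat}_{\mathscr{L}}$, so you are in Case 2, with $W_{\mathscr{L}}=H^0(C,K_C)\oplus H^0(E,K_E)$. By K\"unneth, the map $S^2W_{\mathscr{L}}\to H^0(X,S^2\Omega^1_X)$ contains the summand $S^2H^0(C,K_C)\to H^0(C,K_C^{2})$. This summand has nonzero kernel, since $g(g+1)/2>3g-3$. So the cone map fails to be a closed immersion at the vertex. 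Your Zariski main theorem fallback needs normality of the image, which you have no way to establish; here the image cone is in fact not normal at the vertex.

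\textbf{The fix.} Stay away from the vertex and use finiteness. Write $F=\Omega^1_X\otimes\mathscr{L}^{-1}$ and $H=H^0(X,S^nF)$.
\begin{enumerate}
\item[(i)] $\alpha^n=0$ forces $\alpha=0$: at the generic point, $\mathrm{Sym}_K F_K$ is a polynomial ring. By graded Nakayama, $W_{\mathscr{L}}/\!/\mu_n\to H$ is finite, hence so is its restriction $(W_{\mathscr{L}}\setminus 0)/\mu_n\to H\setminus\{0\}$.
\item[(ii)] $\alpha^n=\beta^n$ with $\alpha\neq 0$ gives $\beta=\zeta\alpha$ with $\zeta\in\mu_n(k)$. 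This uses unique factorization in $\mathrm{Sym}_K F_K$ and the fact that $k$ is algebraically closed in $K$.
\item[(iii)] For $\alpha\neq 0$ the differential $\beta\mapsto n\alpha^{n-1}\beta$ is injective, by the same domain argument. Since $W_{\mathscr{L}}\setminus 0\to(W_{\mathscr{L}}\setminus 0)/\mu_n$ is \'etale, the induced map on the quotient is unramified.
\end{enumerate}
A finite, unramified, universally injective morphism is a closed immersion. Hence $(W_{\mathscr{L}}\setminus 0)/\mu_n\to H\setminus\{0\}$ is a closed immersion, and its open subset $W^{\rm sat}_{\mathscr{L}}/\mu_n$ is locally closed in $H$. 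Compose with the injective linear map $\tau_0\cdot(-)$ and use that the source is reduced. This gives the isomorphism onto $\mathscr{B}^{\rm cyc}_{\mathscr{L}}$, which is exactly the content the paper's unproved assertion requires.
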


\begin{proof}
Based on Lemma \ref{lem_HE3.11}, we prove this lemma in two cases. We first assume that $\dim W_{\mathscr{L}} = 1$. We fix a nonzero element $\alpha_0 \in W_\mathscr{L}$. Since $W_\mathscr{L}$ is of one dimension, any element of $W^{\rm sat}_\mathscr{L}$ is of the form $c \alpha_0$ with some $c \in k^\times$. Therefore, any point of $V^{\rm sat,\times}_\mathscr{L}$ can be written uniquely as a pair $(\tau,c\alpha_0)$ with $\tau \in U_\mathscr{L} \backslash \{0\}$ and $c \in k^\times$. Then, it is clear that every $\mathbb{G}_m$-orbit in $V^{\rm sat,\times}_\mathscr{L}$ contains a unique representative in the form $(\tau,\alpha_0)$. This implies that
\begin{align*}
    [V^{\rm sat,\times}_\mathscr{L} / \mathbb{G}_m] \cong U_\mathscr{L} \backslash \{0\}.
\end{align*}
Under this isomorphism, the morphism $\bar{q}_\mathscr{L}$ is exactly
\begin{align*}
    U_\mathscr{L} \backslash \{0\} \rightarrow \mathscr{B}^{\rm cyc}_\mathscr{L}, \quad \tau \mapsto \tau \alpha^n_0,
\end{align*}
and then 
\begin{align*}
    [V^{\rm sat,\times}_\mathscr{L} /\mathbb{G}_m] \cong U_\mathscr{L} \backslash \{0\} \cong \mathscr{B}^{\rm cyc}_\mathscr{L}.
\end{align*}

Now we suppose that $\dim U_{\mathscr{L}} = 1$. We fix an element $\tau_0 \in U_\mathscr{L} \backslash \{0\}$. Any element in $U_\mathscr{L} \backslash \{0\}$ can be written in the form $c \tau_0$ with $c \in k^{\times}$. Under the $\mathbb{G}_m$-action
\begin{align*}
    c \cdot (\tau,\alpha) : = (c^{-n} \tau, c \alpha),
\end{align*}
we have
\begin{align*}
    [V^{\rm sat,\times}_\mathscr{L} / \mathbb{G}_m] \cong [W^{\rm sat}_\mathscr{L} / \mu_n].
\end{align*}
Since the $\mu_n$-action on $W^{\rm sat}_\mathscr{L}$ is free, we have
\begin{align*}
    [V^{\rm sat,\times}_\mathscr{L} / \mathbb{G}_m] \cong W^{\rm sat}_\mathscr{L} / \mu_n,
\end{align*}
which is a reduced scheme. Note that the natural morphism
\begin{align*}
    W^{\rm sat}_\mathscr{L} \rightarrow H^0(X, S^n \Omega_X^1), \quad \alpha \mapsto \tau_0 \alpha^n
\end{align*}
descends to
\begin{align*}
    W^{\rm sat}_\mathscr{L} / \mu_n \rightarrow H^0(X, S^n \Omega_X^1),
\end{align*}
which is a locally closed immersion and factors through $\mathscr{B}^{\rm cyc,\times}_X$. Therefore, we have
\begin{align*}
    [V^{\rm sat,\times}_\mathscr{L} / \mathbb{G}_m] \cong W^{\rm sat}_\mathscr{L} / \mu_n \cong \mathscr{B}^{\rm cyc}_\mathscr{L}.
\end{align*}
\end{proof}

\begin{thm}\label{thm_proj_Hit_sect}
For each $\mathscr{L} \in \mathcal{P}_X$, there exists a morphism $s_{\mathscr{L}} : \mathscr{B}^{\rm cyc}_{\mathscr{L}} \rightarrow \mathscr{M}^{\rm cyc,\times}_X$ such that ${\rm sd}_X \circ s_{\mathscr{L}} = \iota_\mathscr{L}$.
\begin{center}
\begin{tikzcd}
{[V_{\mathscr{L}}^{\mathrm{sat}, \times} / \mathbb{G}_m]} \arrow[r,"\bar{s}_\mathscr{L}"] \arrow[d,"\cong"] & \mathscr{M}^{\rm cyc,\times}_X \arrow[d, "{\rm sd}_X"] \\
\mathscr{B}^{\rm cyc}_{\mathscr{L}} \arrow[r, hook, "\iota_\mathscr{L}"] \arrow[ru, "s_\mathscr{L}" description, dotted] & \mathscr{B}^{\rm cyc, \times}_X \, .
\end{tikzcd}
\end{center}
\end{thm}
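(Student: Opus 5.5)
The plan is to define $s_{\mathscr{L}}$ by transporting $\bar{s}_{\mathscr{L}}$ along the isomorphism of Lemma \ref{lem_B_cyc_L}. Since $X$ is connected, smooth and projective and $\mathscr{L} \in \mathcal{P}_X$, that lemma shows that $\bar{q}_{\mathscr{L}}$ induces an isomorphism $\psi_{\mathscr{L}}: [V^{\rm sat,\times}_{\mathscr{L}}/\mathbb{G}_m] \xrightarrow{\sim} \mathscr{B}^{\rm cyc}_{\mathscr{L}}$ with $\iota_{\mathscr{L}} \circ \psi_{\mathscr{L}} = \bar{q}_{\mathscr{L}}$. Let $\psi_{\mathscr{L}}^{-1}$ be a quasi-inverse; since the source is a reduced scheme, this is an honest inverse of schemes. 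Set
\begin{align*}
    s_{\mathscr{L}} := \bar{s}_{\mathscr{L}} \circ \psi_{\mathscr{L}}^{-1} : \mathscr{B}^{\rm cyc}_{\mathscr{L}} \rightarrow \mathscr{M}^{\rm cyc,\times}_X .
\end{align*}
Theorem \ref{thm_stack_Hit_sect} gives ${\rm sd}_X \circ \bar{s}_{\mathscr{L}} = \bar{q}_{\mathscr{L}}$, so
\begin{align*}
    {\rm sd}_X \circ s_{\mathscr{L}} = \bar{q}_{\mathscr{L}} \circ \psi_{\mathscr{L}}^{-1} = \iota_{\mathscr{L}} \circ \psi_{\mathscr{L}} \circ \psi_{\mathscr{L}}^{-1} = \iota_{\mathscr{L}} .
\end{align*}
This makes both triangles in the diagram commute.

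The only point needing care is that $\bar{q}_{\mathscr{L}}$ really factors through the reduced locally closed subscheme $\mathscr{B}^{\rm cyc}_{\mathscr{L}}$, so that $\psi_{\mathscr{L}}$ is a morphism and the identity $\iota_{\mathscr{L}} \circ \psi_{\mathscr{L}} = \bar{q}_{\mathscr{L}}$ holds scheme-theoretically. I would argue this as follows. By Lemma \ref{lem_B_cyc_L}, $[V^{\rm sat,\times}_{\mathscr{L}}/\mathbb{G}_m]$ is a reduced scheme, namely $U_{\mathscr{L}}\setminus\{0\}$ or $W^{\rm sat}_{\mathscr{L}}/\mu_n$. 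A morphism from a reduced scheme whose set-theoretic image lies in a locally closed subset factors uniquely through the reduced induced subscheme on that subset. This is exactly how the isomorphism in Lemma \ref{lem_B_cyc_L} is produced, so I would cite it directly.

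Two choices made in the lemma's case split — $\alpha_0$ with $\dim W_{\mathscr{L}}=1$, or $\tau_0$ with $\dim U_{\mathscr{L}}=1$ — play no role here. The equality ${\rm sd}_X \circ s_{\mathscr{L}} = \iota_{\mathscr{L}}$ is a formal consequence of the two cited results, and I expect no real obstacle beyond this factorization through the reduced subscheme.
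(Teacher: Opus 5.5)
Your proposal is correct and is essentially the paper's proof: the paper likewise defines $s_{\mathscr{L}}$ by composing the inverse of the isomorphism $[V^{\rm sat,\times}_{\mathscr{L}}/\mathbb{G}_m] \cong \mathscr{B}^{\rm cyc}_{\mathscr{L}}$ from Lemma \ref{lem_B_cyc_L} with $\bar{s}_{\mathscr{L}}$ from Theorem \ref{thm_stack_Hit_sect}, except that it writes this out separately in the cases $\dim W_{\mathscr{L}}=1$ and $\dim U_{\mathscr{L}}=1$. Your uniform argument, together with the remark that a morphism from a reduced scheme factors through the reduced induced subscheme, packages the same reasoning slightly more cleanly.
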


\begin{proof}
This theorem is a direct result of Lemma \ref{lem_B_cyc_L} and Theorem \ref{thm_stack_Hit_sect}. For completeness, we give the proof in two cases as discussed in Lemma \ref{lem_B_cyc_L}. We define the locally free sheaf
\begin{align*}
    E_\mathscr{L} : = \mathcal{O}_X \oplus \mathscr{L}^{-1} \oplus \cdots \oplus \mathscr{L}^{-(n-1)}.
\end{align*}

For the case $\dim W_{\mathscr{L}} = 1$, we fix a nonzero element $\alpha_0 \in W_{\mathscr{L}}$. We define a morphism
\begin{align*}
    U_{\mathscr{L}} \backslash \{0\} \rightarrow \mathscr{M}_X, \quad \tau \mapsto (E_\mathscr{L} , \theta_{\tau,\alpha_0}),
\end{align*}
where 
\begin{align*}
\theta_{\tau,\alpha_0} = 
\begin{pmatrix}
     & \alpha_0 & & \\
    &  & \ddots & \\
    & & & \alpha_0 \\
    \tau \alpha_0 & & & \\
\end{pmatrix}.
\end{align*}
Under the isomorphism $U_{\mathscr{L}} \backslash \{0\} \cong \mathscr{B}^{\rm cyc}_\mathscr{L}$ given in Lemma \ref{lem_B_cyc_L}, we define 
\begin{align*}
    s_\mathscr{L}: \mathscr{B}^{\rm cyc}_\mathscr{L} \xrightarrow{\cong} U_{\mathscr{L}} \backslash \{0\} \cong [V^{\rm sat,\times}_\mathscr{L} /  \mathbb{G}_m] \xrightarrow{\bar{s}_\mathscr{L}} \mathscr{M}^{\rm cyc, \times}_X
\end{align*}
as the composition of morphisms. It is easy to check that the spectral data corresponding to $(E_\mathscr{L},\theta_{\tau,\alpha_0})$ is $a = \tau \alpha_0^n$. Therefore, $s_\mathscr{L} : \mathscr{B}^{\rm cyc}_\mathscr{L} \rightarrow \mathscr{M}^{\rm cyc, \times}_X$ is a section.

For the case $\dim U_{\mathscr{L}} = 1$, we have
\begin{align*}
    [V^{\rm sat,\times}_\mathscr{L} / \mathbb{G}_m] \cong W^{\rm sat}_\mathscr{L} / \mu_n.
\end{align*}
Then, the morphism $\bar{s}_\mathscr{L}: [V^{\rm sat,\times}_\mathscr{L} / \mathbb{G}_m] \rightarrow \mathscr{M}^{\rm cyc, \times}_X$ constructed in Theorem \ref{thm_stack_Hit_sect} induces a morphism
\begin{align*}
    W^{\rm sat}_\mathscr{L} / \mu_n \rightarrow \mathscr{M}^{\rm cyc, \times}_X.
\end{align*}
Under the isomorphism $\mathscr{B}^{\rm cyc}_\mathscr{L} \cong W^{\rm sat}_\mathscr{L} / \mu_n$, the section is defined as
\begin{align*}
    s_\mathscr{L}: \mathscr{B}^{\rm cyc}_\mathscr{L} \xrightarrow{\cong} W^{\rm sat}_\mathscr{L} / \mu_n \xrightarrow{\cong} [V^{\rm sat,\times}_\mathscr{L} / \mathbb{G}_m] \xrightarrow{\bar{s}_\mathscr{L}} \mathscr{M}^{\rm cyc, \times}_X.
\end{align*}
\end{proof}

\begin{rem}\label{rem_non-alg}
Although we construct a section $s_\mathscr{L}: \mathscr{B}^{\rm cyc}_\mathscr{L} \rightarrow \mathscr{M}^{\rm cyc,\times}_X$, it is difficult to construct a ``global" one on the scheme of cyclic spectral data $\mathscr{B}^{\rm cyc}_X$. The main difficulty arises from the decomposition given in Corollary \ref{cor_decomp}, where the uniqueness depends on the condition that $\alpha(\mathscr{L})$ is saturated. We illustrate this difficulty with the following example.

Let $X = C \times E$ be a product of smooth projective curves with projections
\begin{align*}
    p_C: X \rightarrow C, \, p_E: X \rightarrow E.
\end{align*}
Denote by $K_\bullet$ the canonical bundle, where $\bullet = C, E$. Assume that the genus satisfies $g(C) > 1$ and $g(E) = 1$. We write
\begin{align*}
\Omega_X^1 = \mathscr{L}_1 \oplus \mathscr{L}_2,
\end{align*}
where $\mathscr{L}_1:= p_C^* K_C$ and $\mathscr{L}_2 := p_E^* K_E \cong \mathcal{O}_X$. We choose nonzero sections $u_C \in H^0(C,K_C)$ and $u_E \in H^0(E,K_E)$. Now we define
\begin{align*}
    \omega_t:= p^*_C u_C + t p^*_E u_E \in H^0(X,\Omega_X^1)
\end{align*}
for $t \in \mathbb{A}^1$, and set $a_t : = \omega_t^2 \in H^0(X,S^2\Omega_X^1)$. Then, we obtain a morphism
\begin{align*}
    \mathbb{A}^1 \rightarrow \mathscr{B}^{\rm cyc}_X \backslash \{0\}, \quad t \mapsto a_t.
\end{align*}

When $t \neq 0$, $\omega_t$ is nowhere vanishing because $E$ is elliptic and $u_E$ is nowhere vanishing. Then, the morphism $\omega_t:\mathcal{O}_X \cong \mathscr{L}_2 \rightarrow \Omega_X^1$ is saturated and $a_t \in \mathscr{B}^{\rm cyc}_{\mathscr{L}_2}(k)$. Thus, the cyclic Higgs bundle constructed in the second proof of Theorem \ref{thm_nonempty} is $(E_t,\theta_t)$, where
\begin{align*}
    E_t = \mathcal{O}_X \oplus \mathcal{O}_X, \, \theta_t = \begin{pmatrix}
        0 & \omega_t \\
        \omega_t & 0
    \end{pmatrix}.
\end{align*}
When $t=0$, we have $\omega_0 = p^*_C u_C$. Since $g(C) > 1$, the section $u_C$ has a nontrivial zero locus. Then, the morphism $\omega_0: \mathcal{O}_X \rightarrow \Omega_X^1$ is not saturated. Note that $\omega_0$ factors as
\begin{align*}
    \mathcal{O}_X \xrightarrow{\cdot u_C} \mathscr{L}_1 \hookrightarrow \Omega_X^1.
\end{align*}
Clearly, the natural inclusion $\mathscr{L}_1 \hookrightarrow \Omega_X^1$ is saturated and $a_0 \in \mathscr{B}^{\rm cyc}_{\mathscr{L}_1}(k)$. Therefore, the underlying locally free sheaf of the corresponding Higgs bundle is 
\begin{align*}
    E_0 = \mathcal{O}_X \oplus \mathscr{L}_1^{-1}.
\end{align*}

Since
\begin{align*}
    c_1(E_t) = 0 \text{ and } c_1(E_0) = -c_1(\mathscr{L}_1)\neq 0,
\end{align*}
these objects $(E_t,\theta_t)$ for $t \in \mathbb{A}^1$ cannot be fibers of an algebraic flat family over $\mathbb{A}^1$. Therefore, the set-theoretic map
\begin{align*}
    \mathscr{B}^{\rm cyc,\times}_X(k) \rightarrow \mathscr{M}^{\rm cyc, \times}_X(k), \quad a \mapsto (E_a,\theta_a)
\end{align*}
given in the second proof of Theorem \ref{thm_nonempty} is not induced by a morphism $\mathscr{B}^{\rm cyc,\times}_X \rightarrow \mathscr{M}^{\rm cyc, \times}_X$.
\end{rem}

As a direct result of Theorem \ref{thm_univ_fam} and Lemma \ref{lem_B_cyc_L}, there exists a tautological family of spectral covers on $\mathscr{B}^{\rm cyc}_\mathscr{L}$.

\begin{prop}\label{prop_univ_fam_proj}
Let $X$ be a connected smooth projective variety. For each $\mathscr{L} \in \mathcal{P}_X$, there exists a finite flat morphism of degree $n$
\begin{align*}
    \mathcal{X}_\mathscr{L} \rightarrow X \times \mathscr{B}^{\rm cyc}_\mathscr{L}
\end{align*}
such that for every geometric point $a$ of $\mathscr{B}^{\rm cyc}_\mathscr{L}$, the fiber is $(\mathcal{X}_\mathscr{L})_{a} \cong X'_a$.
\end{prop}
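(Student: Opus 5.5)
The plan is to transport the stacky family of Theorem \ref{thm_univ_fam} along the isomorphism of Lemma \ref{lem_B_cyc_L}. Fix $\mathscr{L} \in \mathcal{P}_X$. By Lemma \ref{lem_B_cyc_L}, $\bar{q}_\mathscr{L}$ induces an isomorphism $[V^{\rm sat,\times}_\mathscr{L}/\mathbb{G}_m] \cong \mathscr{B}^{\rm cyc}_\mathscr{L}$, with the right-hand side a reduced scheme. Theorem \ref{thm_univ_fam} gives a finite flat morphism of degree $n$
\begin{align*}
p_\mathscr{L}: \mathcal{X}_\mathscr{L} \rightarrow X \times [V^{\rm sat,\times}_\mathscr{L}/\mathbb{G}_m].
\end{align*}
I will compose $p_\mathscr{L}$ with $\mathrm{id}_X \times \bar{q}_\mathscr{L}$, which is an isomorphism onto $X \times \mathscr{B}^{\rm cyc}_\mathscr{L}$. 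This yields a morphism $\mathcal{X}_\mathscr{L} \rightarrow X \times \mathscr{B}^{\rm cyc}_\mathscr{L}$.

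First I check that $\mathcal{X}_\mathscr{L}$ is a scheme. It is representable and finite over the scheme $X \times \mathscr{B}^{\rm cyc}_\mathscr{L}$, since these properties were built into $p_\mathscr{L}$ by descent of the relative $\mathrm{Spec}$ of $\mathcal{A}_{\tau,{\rm univ}}$. Hence $\mathcal{X}_\mathscr{L}$ is a relative $\mathrm{Spec}$ of a finite locally free algebra of rank $n$ over a scheme, and so is itself a scheme. Finiteness, flatness and degree are preserved under composition with an isomorphism.

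A more concrete route is available through the two cases of Lemma \ref{lem_B_cyc_L}, and I would use it to double-check representability.
\begin{enumerate}
\item If $\dim W_\mathscr{L}=1$, choose the slice $U_\mathscr{L}\setminus\{0\} \hookrightarrow V^{\rm sat,\times}_\mathscr{L}$, $\tau \mapsto (\tau,\alpha_0)$. Pulling back $\mathcal{X}_{\mathscr{L},V}$ gives directly $\mathrm{Spec}$ of $\oplus_{i=0}^{n-1} p_X^*\mathscr{L}^{-i}$, with multiplication given by the universal $\tau$.
\item If $\dim U_\mathscr{L}=1$, fix $\tau_0$. Take the $\mu_n$-quotient of the family restricted to $X\times W^{\rm sat}_\mathscr{L}$. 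Here $\mu_n$ acts freely on the base, so the quotient is again finite flat of degree $n$ over $X \times W^{\rm sat}_\mathscr{L}/\mu_n$, by étale descent along a free quotient.
\end{enumerate}

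For the fibers, let $a$ be a geometric point of $\mathscr{B}^{\rm cyc}_\mathscr{L}$. Its preimage under the isomorphism is a point represented by some $(\tau,\alpha)$ with $a=\tau\alpha^n$. Part (3) of Theorem \ref{thm_univ_fam} gives $(\mathcal{X}_\mathscr{L})_a \cong X'_a$. The result does not depend on the representative, because $X'_a$ is independent of the decomposition, as noted after Corollary \ref{cor_decomp}.

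The main subtlety I expect is representability and the meaning of the fiber over a stacky point. The cleanest way around it is the explicit slice and free-$\mu_n$-quotient description above.
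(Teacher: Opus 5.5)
Your proposal is correct and takes the same route as the paper, which obtains the family by transporting the finite flat family $p_\mathscr{L}$ of Theorem \ref{thm_univ_fam} along the isomorphism $[V^{\rm sat,\times}_\mathscr{L}/\mathbb{G}_m]\cong\mathscr{B}^{\rm cyc}_\mathscr{L}$ of Lemma \ref{lem_B_cyc_L}. Your additional checks are sound and only make explicit what the paper leaves implicit: representability via finiteness over a scheme, the slice in the case $\dim W_\mathscr{L}=1$, and the free $\mu_n$-quotient in the case $\dim U_\mathscr{L}=1$.
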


\bibliographystyle{amsalpha} 
\bibliography{ref_pnahc}

\bigskip
\noindent\small{\textsc{Department of Mathematics, South China University of Technology, \\
Guangzhou, 510641, China}\\
\emph{E-mail address}:  \texttt{hsun71275@scut.edu.cn}

\end{document}